\newcommand{\N}{\mathbb{N}}
\newcommand{\R}{\mathbb{R}}
\newcommand{\C}{\mathbb{C}}
\newcommand{\D}{\mathbb{D}}
\newcommand{\T}{\mathbb{T}}
\newtheorem{theorem}{Theorem}
\newtheorem{lemma}[theorem]{Lemma}
\newtheorem{corollary}[theorem]{Corollary}
\theoremstyle{definition}
\newtheorem{problem}{Problem}
\renewcommand{\Re}{\operatorname{Re}\,}
\renewcommand{\Im}{\operatorname{Im}\,}
\title[An analog of Suffridge's convolution theorem]{Suffridge's
  convolution theorem for polynomials with zeros in the unit disk}
\author{Martin Lamprecht} 
\address{Department of Computer Science and Engineering\\
European University of Cyprus\\
Diogenous Str. 6, Engomi, P.O. Box: 22006, 1516 Nicosia, Cyprus}
\email{m.lamprecht@euc.ac.cy}
\address{Department of Mathematics and Statistics\\
University of Cyprus\\
P.O. Box: 20537, 1678 Nicosia, Cyprus}
\email{lmartin@ucy.ac.cy}
\keywords{Suffridge polynomials, Grace's theorem}
\subjclass[2010]{30C10, 30C15}
\begin{document}

\maketitle
\begin{abstract}
  In 1976 Suffridge proved an intruiging theorem regarding the convolution of
  polynomials with zeros only on the unit circle. His result generalizes a
  special case of the fundamental Grace-Szeg\"o convolution theorem, but so
  far it is an open problem whether there is a Suffridge-like extension of the
  general Grace-Szeg\"o convolution theorem. In this paper we try to approach
  this question from two different directions: First, we show that Suffridge's
  convolution theorem holds for a certain class of polynomials with zeros in
  the unit disk and thus obtain an extension of one further special case of
  the Grace-Szeg\"o convolution theorem. Second, we present non-circular zero
  domains which stay invariant under the Grace-Szeg\"o convolution hoping
  that this will lead to further analogs of Suffridge's convolution theorem. 
\end{abstract}

\section{Introduction}
\label{sec:introduction}

In 1922 Szeg\"o{} \cite{szegoe} found the following rephrasing of a theorem
of Grace \cite{grace} from 1902 regarding the zeros of apolar polynomials.

\begin{theorem}[Grace-Szeg\"o]
  \label{sec:introduction-1}
  Let 
  \begin{equation*}
    F(z) = \sum_{k=0}^{n} {n \choose k} a_{k} z^{k}\quad\mbox{and} \quad G(z) =
    \sum_{k=0}^{n} {n \choose k} b_{k} z^{k} 
  \end{equation*}
  be polynomials of degree $n\in\N$ and suppose $K\subset \C$ is an
  open or closed disk or half-plane that contains all zeros of
  $F$. Then each zero $\gamma$ of 
  \begin{equation*}
    F*_{GS} G(z) := \sum_{k=0}^{n} {n\choose k} a_{k} b_{k} z^{k}
  \end{equation*}
  is of the form $\gamma = -\alpha \beta$ with $\alpha
  \in K$ and $G(\beta)=0$. If $G(0)\neq 0$, this also holds when $K$
  is the open or closed exterior of a disk.
\end{theorem}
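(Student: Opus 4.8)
The plan is to deduce the theorem from Grace's classical apolarity theorem. Recall that two polynomials $P(z)=\sum_{k=0}^{n}\binom{n}{k}A_{k}z^{k}$ and $R(z)=\sum_{k=0}^{n}\binom{n}{k}B_{k}z^{k}$ are called \emph{apolar} if $\sum_{k=0}^{n}(-1)^{k}\binom{n}{k}A_{k}B_{n-k}=0$, and that Grace's theorem asserts: if $P$ and $R$ are apolar and a circular region (a disk, a half-plane, or the exterior of a disk) contains all the zeros of $P$, then it contains at least one zero of $R$. The entire argument consists in building, out of a prescribed zero $\gamma$ of the convolution, a polynomial $R$ that is apolar to $F$ and whose zeros are exactly the candidate numbers $-\gamma/\beta$.

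First I would fix a zero $\gamma$ of $F*_{GS}G$ and dispatch $\gamma=0$ by hand: then the constant term $a_{0}b_{0}$ vanishes, so either $F(0)=0$, giving $0\in K$ and $\gamma=-0\cdot\beta$, or $G(0)=0$, giving $\gamma=-\alpha\cdot 0$; either way $\gamma$ has the desired form (and in the exterior-of-a-disk case the hypothesis $G(0)\neq 0$ forces the former). Assuming $\gamma\neq 0$, I set
\[
  Q(z):=z^{n}G(-\gamma/z)=\sum_{k=0}^{n}\binom{n}{k}b_{k}(-\gamma)^{k}z^{n-k}.
\]
The crucial computation is that, after inserting these coefficients into the apolarity relation for $F$ and $Q$ and collapsing the signs via $(-1)^{k}(-\gamma)^{k}=\gamma^{k}$, the apolarity condition becomes precisely $\sum_{k=0}^{n}\binom{n}{k}a_{k}b_{k}\gamma^{k}=0$; that is, $F$ and $Q$ are apolar if and only if $\gamma$ is a zero of $F*_{GS}G$. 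This is the heart of the reduction.

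Next I would apply Grace's theorem to the apolar pair $F,Q$: since $F$ has degree $n$ and all its zeros lie in the circular region $K$, the region $K$ must contain a zero of $Q$. Reading the zeros of $Q$ off from $Q(z)=z^{n}G(-\gamma/z)$, for $z\neq 0$ one has $Q(z)=0$ exactly when $-\gamma/z$ is a zero $\beta$ of $G$, i.e.\ $z=-\gamma/\beta$. Hence the zero guaranteed by Grace is of the form $\alpha:=-\gamma/\beta\in K$ with $G(\beta)=0$, and this rearranges to $\gamma=-\alpha\beta$, exactly as claimed.

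The step I expect to be the main obstacle is the bookkeeping of the degree of $Q$, equivalently of zeros at infinity. Since the coefficient of $z^{n}$ in $Q$ equals $b_{0}=G(0)$, the polynomial $Q$ has degree exactly $n$ precisely when $G(0)\neq 0$; if $G(0)=0$ it sheds zeros to infinity, and one must ensure that Grace's conclusion is not met vacuously by such an infinite zero instead of by a finite $-\gamma/\beta\in K$. For a disk this never arises, since $\infty\notin K$. For the exterior of a disk, however, $K$ surrounds a neighborhood of $\infty$, so a zero of $Q$ at infinity would lie in $K$ and fulfil Grace's theorem without producing any finite representation $\gamma=-\alpha\beta$ with $\alpha\in K$; the hypothesis $G(0)\neq 0$ is imposed exactly to force $\deg Q=n$ and rule this out (a pair of zeros of $F$ straddling the excluded disk shows the condition is genuinely needed). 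The half-plane case, though its closure meets $\infty$, still yields a finite zero in $K$ by convexity, and I would treat it with a short separate argument. Making all of this uniform across the open and closed variants of each region is where the care is required.
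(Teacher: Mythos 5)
The paper gives no proof of this statement: it is the classical Grace--Szeg\"o theorem, quoted from Grace (1902) and Szeg\"o (1922) as background, so there is no in-paper argument to measure yours against. Your route is in any case the standard one --- essentially Szeg\"o's original derivation. The substitution $Q(z)=z^{n}G(-\gamma/z)$ converts ``$\gamma$ is a zero of $F*_{GS}G$'' into ``$F$ and $Q$ are apolar''; your verification of this identity is correct, as are the identification of the zeros of $Q$ with the points $-\gamma/\beta$, the remark that $Q(0)\neq 0$ because $\deg G=n$, the treatment of $\gamma=0$, and the example showing that $G(0)\neq 0$ is genuinely needed for exteriors of disks.

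The one genuine gap is the half-plane case when $G(0)=0$. There $\deg Q<n$, so $Q$ acquires zeros at $\infty$; since $\infty$ lies in the closure of a half-plane on the sphere, Grace's apolarity theorem may be satisfied by those zeros alone and then yields no finite $\alpha\in K$. You flag this correctly but dispose of it with ``by convexity'' and ``a short separate argument'', which is precisely the nontrivial content of that case and cannot be left out. Two standard ways to close it: (i) write $G(z)=z^{m}G_{1}(z)$ with $G_{1}(0)\neq 0$; a direct computation with the normalized coefficients shows that apolarity of $F$ and $Q$ as $n$-forms is equivalent (up to a nonzero constant) to apolarity of $F^{(m)}$ and $Q_{1}(z):=z^{n-m}G_{1}(-\gamma/z)$ as forms of exact degree $n-m$, and by Gauss--Lucas all zeros of $F^{(m)}$ lie in the convex hull of the zeros of $F$, hence in $K$ whenever $K$ is a disk or a half-plane; the equal-degree Grace theorem then produces a finite zero $-\gamma/\beta$ of $Q_{1}$ in $K$. (This also explains structurally why the hypothesis $G(0)\neq 0$ is required exactly for exteriors of disks, which are not convex.) Or (ii) a limiting argument: translate the zeros of $F$ into the open half-plane, enclose them in a closed disk contained in $K$, apply the disk case, and let the translation tend to $0$, using that $K$ is closed; the open half-plane case then follows by applying the closed case to a slightly smaller closed half-plane containing the finitely many zeros of $F$. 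With either completion your proof is correct.
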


This result is usually called the \emph{Grace-Szeg\"o convolution theorem} (or
simply \emph{Grace's theorem}), since the \emph{convolution} or \emph{Hadamard
  product} of two functions $f(z)=\sum_{k=0}^{\infty} a_{k} z^{k}$ and
$g(z)=\sum_{k=0}^{\infty} b_{k} z^{k}$, analytic in a neighborhood of the
origin, is given by
\begin{equation*}
  (f*g)(z) := \sum_{k=0}^{\infty} a_{k}b_{k} z^{k}.
\end{equation*}
The weighted convolution $F*_{GS} G$ appearing in Grace's theorem is called the
\emph{Grace-Szeg\"o convolution} of $F$ and $G$.

The Grace-Szeg\"o convolution theorem, together with its many equivalent
forms (cf. \cite[Ch. 3]{rahman}, \cite[Ch. 5]{sheil}), is perhaps the single
most important result regarding the zero location of complex polynomials. For
instance, since
\begin{equation}
  \label{eq:1}
  F*_{GS} z(1+z)^{n-1} = z\frac{F'(z)}{n}
\end{equation}
for every polynomial $F$ of degree $n$, it is easy to see that Grace's
theorem implies the following fundamental fact.

\begin{theorem}[Gau\ss-Lucas]
  The convex hull of the zeros of a polynomial $F$ contains all zeros of
  $F'$.
\end{theorem}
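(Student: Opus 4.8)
The plan is to apply the Grace-Szeg\"o convolution theorem (Theorem~\ref{sec:introduction-1}) to $F$ together with the specific polynomial $G(z)=z(1+z)^{n-1}$, exploiting the identity \eqref{eq:1}. Writing $G(z)=\sum_{k=0}^{n}\binom{n}{k}b_{k}z^{k}$, a short computation gives $b_{0}=0$ and $b_{k}=k/n$ for $1\le k\le n$, so that the zeros of $G$ are exactly $\beta=0$ (simple) and $\beta=-1$ (of order $n-1$). By \eqref{eq:1} the zeros of $F*_{GS}G$ are precisely $0$ together with the zeros of $F'$, which is what makes this choice of $G$ the right one.

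Next I would invoke the standard fact that the convex hull of the finite zero set $Z_{F}$ of $F$ equals the intersection of all closed half-planes $K$ containing $Z_{F}$. Fix any such $K$ and let $\gamma$ be a \emph{nonzero} zero of $F'$. Then $\gamma$ is a zero of $F*_{GS}G$, so the Grace-Szeg\"o theorem yields $\gamma=-\alpha\beta$ with $\alpha\in K$ and $G(\beta)=0$. Since $\gamma\neq 0$ we must have $\beta\neq 0$, hence $\beta=-1$ and therefore $\gamma=\alpha\in K$. As $K$ ranges over all closed half-planes containing $Z_{F}$, this places every nonzero zero of $F'$ in $\operatorname{conv}(Z_{F})$; note that nothing in this argument is special to the particular $F$, so the conclusion holds for every polynomial of degree $n$.

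The one point that needs genuine care --- and the main obstacle --- is a critical point sitting exactly at the origin: in the representation $\gamma=-\alpha\beta$ the value $\gamma=0$ is always ``explained'' by the spurious factor $\beta=0$ coming from the extra $z$ in $G$, so the Grace-Szeg\"o theorem by itself says nothing about whether $0\in K$. (Indeed, $(F*_{GS}G)(0)=a_{0}b_{0}$ forces $b_{0}=0$, so an artificial zero at the origin is unavoidable with a convolutional identity of this type.) I would remove this degeneracy by translation invariance. Given an arbitrary zero $\gamma_{0}$ of $F'$, choose $w\neq\gamma_{0}$ and set $\tilde{F}(z)=F(z+w)$; then $\gamma_{0}-w$ is a nonzero zero of $\tilde{F}'$, and since $Z_{\tilde{F}}=Z_{F}-w$ we have $\operatorname{conv}(Z_{\tilde{F}})=\operatorname{conv}(Z_{F})-w$. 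Applying the previous paragraph to $\tilde{F}$ gives $\gamma_{0}-w\in\operatorname{conv}(Z_{F})-w$, that is, $\gamma_{0}\in\operatorname{conv}(Z_{F})$. As $\gamma_{0}$ was arbitrary, all zeros of $F'$ lie in the convex hull of the zeros of $F$.
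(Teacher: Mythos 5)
Your proof is correct and follows exactly the route the paper indicates: apply the Grace--Szeg\"o theorem to $F$ and $G(z)=z(1+z)^{n-1}$ via the identity \eqref{eq:1}, then intersect over all closed half-planes containing the zeros of $F$. Your extra care with the spurious zero at the origin (resolved by translation invariance) is a detail the paper's one-line derivation glosses over, but it is handled correctly.
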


More generally, Grace's theorem can be used to obtain information about the
relation between zeros and critical points of polynomials. It therefore seems
reasonable to hope that a better understanding of Grace's theorem will lead
to progress on long-standing open problems such as the conjectures of Sendov
or Smale (cf. \cite[Ch. 7]{rahman}, \cite[Ch. 6, 10.4]{sheil}).

In \cite{ruschsheil73} Ruscheweyh and Sheil-Small were able to settle a
famous conjecture of P\'olya and Schoenberg \cite{polschoe58} regarding the
convolution invariance of schlicht convex mappings. Shortly afterwards
Suffridge \cite{suffridge} found an intruiging extension of a special case of
Grace's theorem which enabled him to generalize Ruscheweyh's and
Sheil-Small's theorem. Subsequently, more extensions of Suffridge's theorem
and other special cases of Grace's theorem were found by Ruscheweyh, Salinas,
Sheil-Small, and the author
(cf. \cite{lam12,lam11,rusch77,rusch82,ruschsal08,ruschsal09,
  ruschsalsug09,sheil78,sheil}). The extensions of Grace's theorem found in
these papers show strong similarities; it thus seems very likely that
there should be a generalization of Grace's theorem which unifies all partial
extensions that have been discovered until now.

In this paper we will present two additional extensions of Grace's theorem of
a spirit similar to the one exhibited in
\cite{lam12,lam11,rusch77,rusch82,ruschsal08,ruschsal09,
  ruschsalsug09,sheil78,sheil}. We hope that this will be of help in finding
the desired unified extension of Grace's theorem.


\subsection{The main result: An extension of Suffridge's convolution theorem to
  polynomials with zeros in the unit disk}
\label{sec:an-extens-suffr}

Denote by $\pi_{n}(\Omega)$ and $\pi_{\leq n} (\Omega)$ the sets of
polynomials of degree $n$ and $\leq n$, respectively, which have zeros only
in the set $\Omega\subseteq \C$. For certain $\Omega$ Grace's theorem leads
to interesting invariance results concerning the classes $\pi_{n}(\Omega)$ or
$\pi_{\leq n}(\Omega)$. For instance, if $F\in\pi_{n}(\overline{\D})$ and
$G\in\pi_{n}(\D)$, where $\D:=\{z\in\C:|z|<1\}$ denotes the open unit disk, then
it follows from Grace's theorem that $F*_{GS} G\in\pi_{n}(\D)$. On the other
hand, if $G$ of degree $n$ is such that $F*_{GS} G\in\pi_{n}(\D)$ for all
$F\in\pi_{n}(\overline{\D})$, then the special choice $F = (1+z)^{n}$ yields
$G\in\pi_{n}(\D)$. Hence, the following is true.

\begin{corollary}
  \label{sec:introduction-3}
  Let $G$ be a polynomial of degree $n$. Then $F*_{GS}G\in\pi_{n}(\D)$ for all
  $F\in\pi_{n}(\overline{\D})$ if, and only if, $G\in\pi_{n}(\D)$.
\end{corollary}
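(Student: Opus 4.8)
The plan is to prove the two implications separately, exploiting the fact that in the binomial normalization used here the polynomial $(1+z)^{n}$ plays the role of the identity element for the Grace-Szeg\"o convolution. Indeed, writing $G(z)=\sum_{k=0}^{n}\binom{n}{k}b_{k}z^{k}$ and taking $F(z)=(1+z)^{n}=\sum_{k=0}^{n}\binom{n}{k}z^{k}$, so that every coefficient $a_{k}$ equals $1$, one immediately computes $F*_{GS}G=\sum_{k=0}^{n}\binom{n}{k}b_{k}z^{k}=G$. This single observation is what drives the necessity direction, since it lets us recover $G$ itself as a convolution against an admissible test polynomial.

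For the sufficiency direction I would assume $G\in\pi_{n}(\D)$ and take $F\in\pi_{n}(\oD)$ arbitrary, then apply Grace's theorem (Theorem~\ref{sec:introduction-1}) with $K=\oD$, the closed unit disk, which by hypothesis contains all zeros of $F$. The theorem then guarantees that every zero $\gamma$ of $F*_{GS}G$ has the form $\gamma=-\alpha\beta$ with $\alpha\in\oD$ and $G(\beta)=0$. Because $G$ has all of its zeros in the \emph{open} disk we have $|\beta|<1$, while $|\alpha|\leq 1$, so $|\gamma|=|\alpha|\,|\beta|<1$ and thus $\gamma\in\D$. Since $F$ and $G$ both have degree $n$, their leading coefficients are nonzero, so the leading coefficient $a_{n}b_{n}$ of $F*_{GS}G$ is nonzero; hence $F*_{GS}G$ is genuinely of degree $n$ and all $n$ of its zeros lie in $\D$, giving $F*_{GS}G\in\pi_{n}(\D)$.

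For necessity I would assume $F*_{GS}G\in\pi_{n}(\D)$ for every $F\in\pi_{n}(\oD)$ and simply specialize to $F=(1+z)^{n}$, whose only zero is $-1\in\T\subset\oD$, so that $F\in\pi_{n}(\oD)$ is an admissible choice. By the identity computation above, $F*_{GS}G=G$, and the hypothesis then forces $G\in\pi_{n}(\D)$. Neither direction presents a serious obstacle; the only point requiring genuine care is that the inequality $|\gamma|<1$ in the sufficiency argument be \emph{strict}, and this is exactly what the openness of the disk containing the zeros of $G$ supplies. The mild pleasant surprise is that necessity needs only the one test polynomial $(1+z)^{n}$ rather than the full family $\pi_{n}(\oD)$.
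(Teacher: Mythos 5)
Your proof is correct and follows essentially the same route as the paper: sufficiency by applying Grace's theorem with $K=\oD$ (so that $|\gamma|=|\alpha|\,|\beta|\leq|\beta|<1$ for every zero $\gamma=-\alpha\beta$ of $F*_{GS}G$), and necessity by specializing to the test polynomial $F=(1+z)^{n}$, which acts as the identity for $*_{GS}$ and recovers $G$ itself. Your additional remark that the leading coefficient $a_{n}b_{n}$ is nonzero, so the convolution genuinely has degree $n$, is a small point the paper leaves implicit but is correctly handled here.
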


The same holds with $\overline{\D}$ and $\D$ replaced by $\C\setminus\D$ and
$\C\setminus\overline{\D}$, respectively. Combining these two special cases of
Grace's theorem one obtains the following result concerning polynomials with
zeros only on the unit circle $\T:=\{z\in\C:|z|=1\}$.

\begin{corollary}
  \label{sec:introduction-4}
  Let $G$ be a polynomial of degree $n$. Then $F*_{GS}G\in\pi_{n}(\T)$ for all
  $F\in\pi_{n}(\T)$ if, and only if, $G\in\pi_{n}(\T)$.
\end{corollary}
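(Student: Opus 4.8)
The plan is to treat the two implications separately: the converse will follow from the fact that $(1+z)^{n}$ is the neutral element of the Grace–Szeg\"o convolution, while the forward implication will follow from two applications of the Grace–Szeg\"o theorem, one with a closed disk and one with a closed exterior.

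I would begin with the converse, which is the shorter half. Suppose $F*_{GS}G\in\pi_{n}(\T)$ for every $F\in\pi_{n}(\T)$. Writing $(1+z)^{n}=\sum_{k=0}^{n}\binom{n}{k}z^{k}$, one sees that convolving with $(1+z)^{n}$ reproduces $G$, that is, $(1+z)^{n}*_{GS}G=G$. Since the only zero of $(1+z)^{n}$ is $-1\in\T$, we have $(1+z)^{n}\in\pi_{n}(\T)$, so applying the hypothesis to this particular choice of $F$ yields $G=(1+z)^{n}*_{GS}G\in\pi_{n}(\T)$, which is exactly the desired conclusion.

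For the forward direction I would fix $G\in\pi_{n}(\T)$ and an arbitrary $F\in\pi_{n}(\T)$, and bound the moduli of the zeros of $F*_{GS}G$ from both sides. Taking $K=\overline{\D}$ in Grace's theorem, which is legitimate since every zero of $F$ lies on $\T\subset\overline{\D}$, each zero $\gamma$ of $F*_{GS}G$ has the form $\gamma=-\alpha\beta$ with $|\alpha|\le 1$ and $G(\beta)=0$; as the zeros of $G$ lie on $\T$ this gives $|\beta|=1$ and hence $|\gamma|\le 1$. Taking instead $K=\C\setminus\D$, the closed exterior of the unit disk, the same reasoning gives $|\alpha|\ge 1$ and therefore $|\gamma|\ge 1$. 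Combining the two bounds forces $|\gamma|=1$ for every zero $\gamma$, and since $F$ and $G$ have nonvanishing leading coefficients the convolution $F*_{GS}G$ has degree exactly $n$; thus $F*_{GS}G\in\pi_{n}(\T)$.

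I do not expect a serious obstacle, but two points require care. First, the exterior form of Grace's theorem may only be invoked when $G(0)\neq 0$; this is automatic here, because $G\in\pi_{n}(\T)$ rules out a zero of $G$ at the origin. Second, one must verify that the convolution does not drop degree, which again follows from $F$ and $G$ having degree exactly $n$. Both observations are immediate, so the proof reduces to assembling the two applications of Grace's theorem for the forward direction and the identity $(1+z)^{n}*_{GS}G=G$ for the converse.
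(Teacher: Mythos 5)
Your proof is correct and follows essentially the same route the paper indicates: the forward direction by applying the Grace--Szeg\"o theorem once with $K=\overline{\D}$ and once with $K=\C\setminus\D$ (checking $G(0)\neq 0$ for the exterior case), and the converse by the special choice $F=(1+z)^{n}$, which acts as the identity for $*_{GS}$. The paper merely states that the corollary follows by ``combining these two special cases,'' and your write-up is the natural fleshing-out of exactly that argument.
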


In \cite{suffridge} Suffridge found an intruiging extension of Corollary
\ref{sec:introduction-4}. In order to state his results, recall that the
\emph{$q$-binomial} or \emph{Gaussian central coefficients ${n \brack k}_{q}$}
are defined by (cf. \cite{kaccheung2002} or \cite[Ch. 10]{andaskroy99} as
general references regarding $q$-binomial coefficients)
\begin{equation}
  \label{eq:16}
  R_{n}(q;z):=\sum_{k=0}^{n} q^{k(k-1)/2} {n \brack k}_{q} z^{k} 
  := \prod_{j=1}^{n}(1+q^{j-1}z), \qquad n\in\N,\;q\in\C.
\end{equation}
For reasons of brevity, in the following, for $\lambda\in[0,\frac{2\pi}{n}]$,
we use the notation
\begin{equation}
  \label{eq:2}
  Q_{n}(\lambda;z):= \sum_{k=0}^{n} C_{k}^{(n)}(\lambda) z^{k}:= 
  \prod_{j=1}^{n}(1+e^{i(2j-n-1)\lambda/2}z),
\end{equation}
such that
\begin{equation}
  \label{eq:17}
  Q_{n}(\lambda;z) = R_{n}(e^{i\lambda};e^{-i(n+1)\lambda/2}z)  
  \quad \mbox{and} \quad
  C_{k}^{(n)}(\lambda):= e^{ik(k-n-2)\lambda/2} {n \brack k}_{e^{i\lambda}}.
\end{equation}
Then
\begin{equation}
  \label{eq:6}
  Q_{n}(0;z)=(1+z)^{n}, \quad \mbox{and thus} \quad C_{k}^{(n)}(0)= {n\choose k},
  \qquad k=0,\ldots,n,
\end{equation}
and
\begin{equation}
  \label{eq:43}
  Q_{n}({\textstyle \frac{2\pi}{n}};z)=1+z^{n}.
\end{equation}
We call a polynomial of the form $F(z)=a\, Q_{n}(\lambda;b z)$, with
$a\in\C\setminus\{0\}$ and $b\in\T$, a \emph{$\lambda$-extremal polynomial}.  It is
well known (cf. \cite{suffridge} or \cite[Ch. 7]{sheil}) that
\begin{equation}
  \label{eq:3}
  C_{k}^{(n)}(\lambda) = \frac{\prod_{j=1}^{n}\sin (j\lambda/2)}
  {\prod_{j=1}^{k}\sin (j\lambda/2) 
    \prod_{j=1}^{n-k}\sin (j\lambda/2)} \neq 0 \quad \mbox{for} \quad 
  \lambda\in(0,{\textstyle{\frac{2\pi}{n}}}).
\end{equation}
Hence, for all $\lambda\in[0,\frac{2\pi}{n})$, every pair of polynomials $F$,
$G$ of degree $n$ can be written in the form
\begin{equation*}
  F(z)= \sum_{k=0}^{n} C_{k}^{(n)}(\lambda) a_{k} z^{k},\qquad
  G(z)= \sum_{k=0}^{n} C_{k}^{(n)}(\lambda) b_{k} z^{k}, 
\end{equation*}
and we can define 
\begin{equation}
  \label{eq:4}
  F*_{\lambda}G (z):= \sum_{k=0}^{n} C_{k}^{(n)}(\lambda) a_{k} b_{k} z^{k}. 
\end{equation}
Then, because of (\ref{eq:6}),
\begin{equation}
  \label{eq:5}
  F*_{0}G = F*_{GS}G.
\end{equation}

All zeros of $Q_{n}(\lambda;z)$ lie on $\T$ with each (except one) pair of
consecutive zeros separated by an angle of exactly $\lambda$. Suffridge
\cite{suffridge} introduced the classes $\overline{\mathcal{T}}_{n}(\lambda)$ of
polynomials in which $Q_{n}(\lambda;z)$ is the natural extremal element (note,
however, that in \cite{suffridge} the classes
$\overline{\mathcal{T}}_{n}(\lambda)$ are denoted by
$\mathcal{P}_{n}(\lambda)$). More exactly, the classes
$\overline{\mathcal{T}}_{n}(\lambda)$ are defined to consist of all polynomials
of degree $n$ that have zeros only on $\T$ with each pair of zeros separated by
an angle of at least $\lambda$. $\mathcal{T}_{n}(\lambda)$ shall denote the set
of those $F$ in $\overline{\mathcal{T}}_{n}(\lambda)$ for which every pair of
zeros is separated by an angle $>\lambda$. With these definitions we have that
$\overline{\mathcal{T}}_{n}(\frac{2\pi}{n}) =
\{a(1+bz^{n}):a\in\C\setminus\{0\}, b\in\T\}$, $\mathcal{T}_{n}(\frac{2\pi}{n})
= \emptyset$, $\overline{\mathcal{T}}_{n}(0) = \pi_{n}(\T)$, and that
$\mathcal{T}_{n}(0)$ is the set of those $f\in
\overline{\mathcal{T}}_{n}(0)$ which have only simple zeros.

In the following, for every class $\mathcal{C}_{n}(\lambda)$ of polynomials of
degree $n$ depending on $\lambda\in[0,\frac{2\pi}{n})$, we define the
\emph{pre-coefficient classes} $\mathcal{PC}_{n}(\lambda)$ to consist of all
\begin{equation*}
  f(z)=\sum_{k=0}^{n}a_{k} z^{k}\quad 
  \mbox{for which}
  \quad f*Q_{n}(\lambda;z)= 
  \sum_{k=0}^{n}C_{k}^{(n)}(\lambda)a_{k}z^{k} \in \mathcal{C}_{n}(\lambda).
\end{equation*}
 For instance,
$\mathcal{P}\overline{\mathcal{T}}_{n}(\lambda)$ is the class of those
polynomials $f$ for which $f*Q_{n}(\lambda;z)$ belongs to
$\overline{\mathcal{T}}_{n}(\lambda)$. The polynomials $a \sum_{k=0}^{n}
b^{k}z^{k}$, with $a\in\C\setminus\{0\}$, $b\in\T$, will belong to every
pre-coefficient class considered, and will be called \emph{pre-extremal
  polynomials}.

Suffridge's main results from \cite{suffridge} can now be stated as follows (see
also \cite{lam11} for a different proof, and note that (\ref{item:21}) is an
equivalent form of one of Suffridge's result from \cite{suffridge} which can be
deduced from \cite[Thm. 7.6.9]{sheil}).

\begin{theorem}[Suffridge]
  \label{sec:an-extens-suffr-1}
  Let $\lambda\in[0,\frac{2\pi}{n})$.
  \begin{enumerate}
  \item \label{item:19} If $G$ is a polynomial of degree $n$, then $F
    *_{\lambda} G \in \mathcal{T}_{n}(\lambda)$ for all $F\in
    \overline{\mathcal{T}}_{n}(\lambda)$ if, and only if,
    $G\in\mathcal{T}_{n}(\lambda)$ .
  \item \label{item:20} If $\lambda < \mu < \frac{2\pi}{n}$ and
    $f\in\mathcal{P}\overline{\mathcal{T}}_{n}(\lambda)$ is not pre-extremal,
    then $f\in\mathcal{PT}_{n}(\mu)$.
  \item \label{item:21} Let $f(z) = \sum_{k=0}^{n} a_{k}z^{k}$ be a polynomial
    of degree $n$ whose zeros lie symmetrically around $\T$. Then $f$ belongs to
    $\mathcal{P}\mathcal{T}_{n}(\lambda)$ for a $\lambda\in [0,\frac{2\pi}{n})$
    if, and only if,
    \begin{equation*}
      \frac{f(z)-a_{0}}{a_{n} z^{n} - a_{0}} \quad\mbox{maps} \quad \C
      \setminus\overline{\D} \quad n \mbox{-fold onto the half-plane} 
      \quad \Re z > \frac{1}{2}.
    \end{equation*}
  \end{enumerate}
\end{theorem}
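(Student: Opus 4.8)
The plan is to treat statement~(\ref{item:19}) as the heart of the theorem and to derive (\ref{item:20}) and (\ref{item:21}) from it together with the cited results. In (\ref{item:19}) one implication is immediate from the algebra of $*_{\lambda}$: since $Q_{n}(\lambda;z)=\sum_{k=0}^{n}C_{k}^{(n)}(\lambda)z^{k}$ has all of its $*_{\lambda}$-coefficients equal to $1$, it is the \emph{identity} for $*_{\lambda}$, i.e. $Q_{n}(\lambda;z)*_{\lambda}G=G$ for every $G$ of degree $n$. As $Q_{n}(\lambda;z)\in\overline{\mathcal{T}}_{n}(\lambda)$ (consecutive zeros separated by exactly $\lambda$), applying the hypothesis to $F=Q_{n}(\lambda;z)$ gives $G\in\mathcal{T}_{n}(\lambda)$; this is the exact analog of the choice $F=(1+z)^{n}$ used for Corollary~\ref{sec:introduction-4}.

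The substantial content -- and the main obstacle -- is the reverse implication: $G\in\mathcal{T}_{n}(\lambda)$ should force $F*_{\lambda}G\in\mathcal{T}_{n}(\lambda)$ for every $F\in\overline{\mathcal{T}}_{n}(\lambda)$. For $\lambda=0$ this is Corollary~\ref{sec:introduction-4}, which descends from Grace's theorem; for $\lambda>0$ there is no disk to feed into Grace, so a genuine analog is required. Note first that the extremal case is trivial: if $F=aQ_{n}(\lambda;bz)$ then $F*_{\lambda}G=aG(bz)$ is merely a rotation of $G$ and hence lies in $\mathcal{T}_{n}(\lambda)$, so the difficulty is entirely at interior $F$, where no such simplification is available. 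I would aim to prove the statement as a separation-$\lambda$ analog of the apolarity form of Grace's theorem: encode the existence of a zero of $F*_{\lambda}G$ at a point violating the $\mathcal{T}_{n}(\lambda)$-condition as an apolarity relation between $F$ and an auxiliary polynomial manufactured from $G$ and that point, and then show, using $F\in\overline{\mathcal{T}}_{n}(\lambda)$ and $G\in\mathcal{T}_{n}(\lambda)$, that the relation is impossible. Establishing this Grace-like apolarity criterion for the separation classes is the real technical core. Concretely I would verify it through the boundary phase of a zero-on-$\T$ polynomial, $P(e^{i\phi})=c\,e^{in\phi/2}\prod_{j}\sin((\phi-\theta_{j})/2)$, where the separation condition becomes a spacing statement about the $\theta_{j}$; the task is to show this spacing persists under $*_{\lambda}$ and, moreover, \emph{strictly} improves away from the extremal locus -- the strictness being precisely what pairs the closed class $\overline{\mathcal{T}}_{n}(\lambda)$ with the open class $\mathcal{T}_{n}(\lambda)$.

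For (\ref{item:20}) I would lean on (\ref{item:19}). A direct computation with the nonvanishing coefficients \eqref{eq:3} shows that, for $\lambda<\mu$, one has $f*Q_{n}(\mu;z)=F*_{\lambda}Q_{n}(\mu;z)$ where $F:=f*Q_{n}(\lambda;z)\in\overline{\mathcal{T}}_{n}(\lambda)$ (because $f\in\mathcal{P}\overline{\mathcal{T}}_{n}(\lambda)$) and $Q_{n}(\mu;z)\in\mathcal{T}_{n}(\lambda)$ (its gaps equal $\mu>\lambda$). Part (\ref{item:19}) then already yields $f*Q_{n}(\mu;z)\in\mathcal{T}_{n}(\lambda)$, i.e. all gaps $>\lambda$; what remains is to sharpen the separation from $>\lambda$ to $>\mu$, and this is exactly where the hypothesis that $f$ is \emph{not} pre-extremal enters, since a pre-extremal $f=a\sum_{k}b^{k}z^{k}$ produces $f*Q_{n}(\mu;z)=aQ_{n}(\mu;bz)$ with gaps exactly $\mu$. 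I would obtain the sharpening from the strict form of (\ref{item:19}) flagged above, propagated across the parameter interval by a connectedness argument: the set of $\nu\in[\lambda,\tfrac{2\pi}{n})$ with $f*Q_{n}(\nu;z)\in\overline{\mathcal{T}}_{n}(\nu)$ is closed, contains $\lambda$, and is relatively open at non-extremal points, hence is all of $[\lambda,\tfrac{2\pi}{n})$ and forces the strict separation at $\mu$.

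Finally, (\ref{item:21}) is a reformulation rather than a new inequality. By (\ref{item:19})--(\ref{item:20}) the condition ``$f\in\mathcal{PT}_{n}(\lambda)$ for some $\lambda$'' amounts, for non-pre-extremal $f$, to the single requirement that $f*(1+z)^{n}=\sum_{k}{n\choose k}a_{k}z^{k}$ have only simple zeros, all on $\T$. Under the hypothesis that the zeros of $f$ lie symmetrically about $\T$, the polynomial is essentially determined by its boundary phase, and I would translate this zero distribution into the stated valence property of $w(z)=\frac{f(z)-a_{0}}{a_{n}z^{n}-a_{0}}$: the numerator vanishes at the origin and the symmetry carries $|z|=1$ onto the line $\Re w=\tfrac12$, so that counting preimages in $\C\setminus\oD$ becomes a boundary winding count governed by the gaps, giving an exactly $n$-fold covering of $\{\Re w>\tfrac12\}$. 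This equivalence is then matched to \cite[Thm.~7.6.9]{sheil}, from which (\ref{item:21}) follows; the one delicate point here is normalizing so that the $n$-fold covering corresponds precisely to degree $n$ together with the separation condition.
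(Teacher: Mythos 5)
This is a background theorem that the paper itself does not prove: it is quoted from Suffridge's original article \cite{suffridge} (with an alternative proof in \cite{lam11}, and part (c) deduced from \cite[Thm.~7.6.9]{sheil}), so there is no in-paper argument to compare against. Judged on its own terms, your proposal gets the easy pieces right -- $Q_{n}(\lambda;z)$ is indeed the $*_{\lambda}$-identity, so the choice $F=Q_{n}(\lambda;z)$ settles one direction of (a), the reduction $f*Q_{n}(\mu;z)=F*_{\lambda}Q_{n}(\mu;z)$ with $F=f*Q_{n}(\lambda;z)$ is a correct and standard way to set up (b), and the extremal case $F=aQ_{n}(\lambda;bz)$ does collapse to a rotation of $G$. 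But the proposal stops exactly where the theorem begins. For the hard direction of (a) you write that you ``would aim to prove'' a separation-$\lambda$ apolarity criterion and that establishing it ``is the real technical core''; no such criterion is stated, let alone proved, and there is no known apolarity formulation of Suffridge's theorem to lean on -- the absence of one is precisely why the paper's introduction calls a Grace-type extension of this result an open problem. Suffridge's actual argument (and the alternative in \cite{lam11}) proceeds by entirely different means: a continuity/deformation analysis of the zeros of $F*_{\lambda}G$ as the zeros of $F$ move on $\T$, combined with the interspersion machinery of Lemma~\ref{sec:defin-prel-1} and the Hermite--Kakeya theorem. Asserting that the needed spacing ``persists under $*_{\lambda}$'' is a restatement of the theorem, not a proof of it.

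The sharpening step in (b) has a second concrete gap. Your connectedness argument, as described, would at best yield $f*Q_{n}(\nu;z)\in\overline{\mathcal{T}}_{n}(\nu)$ for all $\nu\in[\lambda,\frac{2\pi}{n})$, i.e.\ membership in the \emph{closed} class; passing from gaps $\geq\mu$ to gaps $>\mu$ is the whole point of the non-pre-extremality hypothesis, and the ``relative openness at non-extremal points'' you invoke is exactly the unproven strict form of (a). One must show that a non-extremal polynomial cannot have two zeros separated by exactly $\mu$ after convolution with $Q_{n}(\mu;z)$, which requires an equality analysis in the convolution argument, not a topological argument on the parameter interval. Part (c) is likewise only gestured at. In short, the scaffolding is sound but all three load-bearing steps are missing.
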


For $\lambda = 0$ Theorem \ref{sec:an-extens-suffr-1}\ref{item:19} (which
will be called Suffridge's convolution theorem from now on) is essentially
equal to the $\T$-special case of Grace's theorem stated in Corollary
\ref{sec:introduction-4}. Because of (\ref{eq:16}) and (\ref{eq:17}),
Suffridge's convolution theorem constitutes a $q$-extension of this special
case, albeit only for $q$ of the form $q=e^{i\lambda}$.

It is so far unknown whether there is an extension of Grace's theorem which
includes Suffridge's convolution theorem as a special case.  In order to look
for such an extension, it seems a promising approach to check whether there are
Suffridge-type extensions for other special cases of Grace's theorem. In
\cite{lam12} such extensions were found for the 'half-plane cases' of Grace's
theorem (cf. Corollaries \ref{sec:introduction-5} and
\ref{sec:introduction-6} below). However, until now it was not clear how the
corresponding extension of Corollary \ref{sec:introduction-3} should look
like. This is mainly due to the fact that it is not obvious what the
'natural' analog of the zero separation condition, used to define the classes
$\overline{\mathcal{T}}_{n}(\lambda)$, should be, if one considers polynomials
with zeros in $\D$. We do not have an answer to this
question yet, but the main result of this paper is an analog of
Theorem \ref{sec:an-extens-suffr-1} for certain sets
$\overline{\mathcal{D}}_{n}(\lambda) \supset
\overline{\mathcal{T}}_{n}(\lambda)$, containing polynomials with zeros in
$\D$, whose definition is motivated as follows:

For $\lambda\in(0,\frac{2\pi}{n})$ and a polynomial $F$ of degree $\leq n$ set
\begin{equation}
  \label{eq:9}
  \begin{split}
    &F_{+}(z) := F_{+,\lambda}(z):=F(e^{i\lambda/2}z) = F*_{\lambda}
    Q_{n}(\lambda;e^{i\lambda/2}z) \quad \mbox{ and} \\
    &F_{-}(z) := F_{-,\lambda}(z):=F(e^{-i\lambda/2}z) = F*_{\lambda}
    Q_{n}(\lambda;e^{-i\lambda/2}z).
  \end{split}
\end{equation}
Using \cite[Thms. 7.2.4, 7.5.2]{sheil}, it is easy to see that the
following holds.

\begin{lemma}
  \label{sec:main-result:-an-1}
  Let $\lambda\in(0,\frac{2\pi}{n})$. A polynomial $F$ of degree $n$
  belongs to $\overline{\mathcal{T}}_{n}(\lambda)$ if, and only if,
  the rational function
  \begin{equation*}
    e^{-in\lambda/2}\frac{F_{+}(z)}{F_{-}(z)}
  \end{equation*}
  maps $\C\setminus\overline{\D}$ onto the open upper
  half-plane. Every point in the open upper half-plane has exactly $n$
  pre-images in $\C\setminus\overline{\D}$ under this mapping if, and
  only if, $F\in\mathcal{T}_{n}(\lambda)$.
\end{lemma}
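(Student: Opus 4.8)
The plan is to reduce the statement to a transparent question about a single rational function. Writing $F(z)=a\prod_{j=1}^{n}(z-e^{i\theta_{j}})$ with $a\neq 0$, a direct computation from (\ref{eq:9}) gives
\[
  R(z):=e^{-in\lambda/2}\frac{F_{+}(z)}{F_{-}(z)}
  =\prod_{j=1}^{n}e^{i\lambda/2}\,
  \frac{z-e^{i(\theta_{j}-\lambda/2)}}{z-e^{i(\theta_{j}+\lambda/2)}}
  =\prod_{j=1}^{n}h_{j}(z),
\]
so $R$ is a product of Möbius factors $h_{j}$ whose zeros $\alpha_{j}:=e^{i(\theta_{j}-\lambda/2)}$ and poles $\beta_{j}:=e^{i(\theta_{j}+\lambda/2)}$ all lie on $\T$. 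First I would record three elementary facts. Since the pole of $h_{j}$ lies on $\T$ and $h_{j}(\infty)=e^{i\lambda/2}$ has argument $\lambda/2\in(0,\pi)$, each $h_{j}$ maps $\C\setminus\oD$ conformally onto the open upper half-plane; in particular $R(\infty)=e^{in\lambda/2}$ lies in the open upper half-plane precisely because $n\lambda<2\pi$ (this is where the normalizing factor $e^{-in\lambda/2}$ earns its place). Next, the boundary values
\[
  R(e^{i\phi})=\prod_{j=1}^{n}
  \frac{\sin\bigl((\phi+\lambda/2-\theta_{j})/2\bigr)}
       {\sin\bigl((\phi-\lambda/2-\theta_{j})/2\bigr)}
\]
are real, so $R$ maps $\T$ into $\R\cup\{\infty\}$, and differentiating each factor shows it is strictly decreasing in $\phi$ (its derivative equals $-\tfrac12\sin(\lambda/2)/\sin^{2}(\cdot)<0$), i.e. each factor is a degree-one monotone map of $\T$ onto $\R\cup\{\infty\}$.

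The second step is to translate the zero-separation condition into a statement about $\alpha_{j},\beta_{j}$. Ordering the zeros of $F$ cyclically, the condition $F\in\overline{\mathcal{T}}_{n}(\lambda)$ (consecutive gaps $\geq\lambda$) is equivalent to $\theta_{j+1}-\theta_{j}\geq\lambda$ for all $j$, which says exactly that $\alpha_{j+1}$ does not precede $\beta_{j}$ on $\T$; hence it is equivalent to the interlacing pattern $\alpha_{1},\beta_{1},\alpha_{2},\beta_{2},\dots,\alpha_{n},\beta_{n}$ of zeros and poles of $R$ around the circle. A gap equal to $\lambda$ forces $\beta_{j}=\alpha_{j+1}$, i.e. a cancellation that lowers $\deg R$; therefore $F\in\mathcal{T}_{n}(\lambda)$ (all gaps $>\lambda$) is equivalent to the $2n$ points being distinct, i.e. to $\deg R=n$. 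This already matches the two halves of the lemma: the degree of $R$ should govern the covering multiplicity.

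With this dictionary in hand I would invoke the half-plane mapping criteria \cite[Thms. 7.2.4, 7.5.2]{sheil}: a rational function that is real on $\T$, has all its zeros and poles interlacing on $\T$, and sends $\infty$ into the upper half-plane maps $\C\setminus\oD$ \emph{onto} the open upper half-plane, covering it $\deg R$ times. Applied to $R$ this gives the forward direction together with the multiplicity statement, since $\deg R=n$ exactly for $F\in\mathcal{T}_{n}(\lambda)$. For the converse one reads the same facts backwards: if $R$ maps $\C\setminus\oD$ onto the upper half-plane then it takes no value in $\R\cup\{\infty\}$ there, so neither $F_{+}$ nor $F_{-}$ vanishes in $\C\setminus\oD$, forcing all zeros of $F$ into $\oD$; the reflection symmetry $\overline{R(1/\bar z)}=R(z)$ (which one checks directly from the product) then maps $\D$ to the lower half-plane and pushes the zeros onto $\T$, and the onto property finally forces the interlacing, hence the separation $\geq\lambda$.

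The real obstacle is the onto step, because $R$ is a \emph{product} of upper half-plane maps, and such products need not stay in a half-plane (the arguments add and may exceed $\pi$); the interlacing is precisely the structure that nevertheless forces $\Im R>0$ throughout $\C\setminus\oD$. If one prefers to avoid quoting \cite{sheil}, I would establish this by a continuity argument: $\mathcal{T}_{n}(\lambda)$ is path-connected, $\deg R\equiv n$ along any path in it, and $R$ depends continuously on $F$ while $R(\T)\subseteq\R\cup\{\infty\}$ prevents a preimage of a fixed $w_{0}$ with $\Im w_{0}>0$ from escaping to $\T$; hence the number of such preimages in $\C\setminus\oD$ is constant along the path. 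It therefore suffices to verify the claim at one base point, and $F=1+z^{n}\in\mathcal{T}_{n}(\lambda)$ is ideal: there the product telescopes to
\[
  R(z)=e^{-in\lambda/2}\,\frac{1+e^{in\lambda/2}z^{n}}{1+e^{-in\lambda/2}z^{n}},
\]
which is a single Möbius map of $w=z^{n}$ onto the upper half-plane precomposed with the $n$-fold map $z\mapsto z^{n}$ of $\C\setminus\oD$ onto itself, so the $n$-fold covering is manifest. The boundary case $\overline{\mathcal{T}}_{n}(\lambda)\setminus\mathcal{T}_{n}(\lambda)$ is then recovered either by the same homotopy allowing coincidences $\beta_{j}=\alpha_{j+1}$ (which only lower the covering multiplicity) or by approximation from within $\mathcal{T}_{n}(\lambda)$.
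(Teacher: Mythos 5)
Your main line of argument is, in substance, the same as the paper's: the paper gives no proof of this lemma beyond the remark that it follows from \cite[Thms.~7.2.4, 7.5.2]{sheil}, and your explicit factorization $R=\prod_j h_j$, the real boundary values, the strict monotonicity of each factor on $\T$, and the dictionary between the gap condition $\theta_{j+1}-\theta_j\geq\lambda$ and the cyclic interlacing of the points $\alpha_j=e^{i(\theta_j-\lambda/2)}$ and $\beta_j=e^{i(\theta_j+\lambda/2)}$ are exactly the reduction one needs to invoke those theorems. Your self-contained homotopy argument for the forward implication and for the multiplicity count is also sound: since $R(\T)\subseteq\R\cup\{\infty\}$ and $R(\infty)=e^{in\lambda/2}$, preimages of a fixed point of the open upper half-plane cannot reach the boundary of $\C\setminus\oD$, so their number is constant along paths in $\mathcal{T}_n(\lambda)$, and the base point $1+z^n$ is computed correctly (the same count applied to a point of the lower half-plane shows the image omits it, so the image is exactly the upper half-plane).

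The genuine gap is in the converse. You argue that surjectivity forces $F_\pm\neq0$ on $\C\setminus\oD$, hence all zeros of $F$ lie in $\oD$, and then invoke ``the reflection symmetry $\overline{R(1/\bar z)}=R(z)$ (which one checks directly from the product)'' to push the zeros onto $\T$. But the product formula, and with it this symmetry, was derived under the hypothesis that all zeros of $F$ already lie on $\T$ --- precisely what is to be proved. In general one only has $\overline{R(1/\bar z)}=e^{-in\lambda/2}(F^{*n})_+/(F^{*n})_-$, which coincides with $R$ only when $F$ is a unimodular multiple of $F^{*n}$, i.e.\ when the zeros are symmetric about $\T$. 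The step is not cosmetic: the classes $\overline{\mathcal{D}}_n(\lambda)$ defined immediately after the lemma consist of polynomials with $\Im R>0$ throughout $\C\setminus\oD$ and contain polynomials with all zeros strictly inside $\D$ (e.g.\ $z^n$, for which $R\equiv e^{in\lambda/2}$), so ``into the upper half-plane'' does not force zeros onto $\T$; only surjectivity onto the \emph{unbounded} half-plane does, and you never exploit it for this purpose. A repair: since the image omits $\infty$, $R$ is finite on $\{|z|>1\}\cup\{\infty\}$, so if $F$ had no zero on $\T$ then $R$ would be bounded on the closed exterior and could not cover the upper half-plane; this produces a boundary pole, and one must then either carry out the local analysis at boundary poles (a pole of order $k$ on $\T$ is compatible with $\Im R\geq0$ on the exterior side only for $k=1$ with the correct residue phase, which lets you account for all $n$ zeros of $F_-$ on $\T$) or fall back on the Hermite--Kakeya interspersion machinery of Lemmas \ref{sec:defin-prel-1} and \ref{sec:self-invers-polyn-2}. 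As written, your converse does not close.
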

 
This result gives the motivation to consider the
classes $\overline{\mathcal{D}}_{n}(\lambda)$, $\lambda\in(0,\frac{2\pi}{n})$,
of all polynomials $F$ of degree $n$ for which
\begin{equation}
  \label{eq:8}
  \Im \left(e^{-in\lambda/2} \frac{F_{+}(z)}{F_{-}(z)}\right)>0, 
  \qquad z\in\C\setminus\overline{\D}.
\end{equation}
We also define $\mathcal{D}_{n}(\lambda)$ as the union of
$\mathcal{T}_{n}(\lambda)$ with the set of all polynomials $F$ of degree $n$
for which
\begin{equation}
  \label{eq:22}
  \Im \left(e^{-in\lambda/2} \frac{F_{+}(z)}{F_{-}(z)}\right)>0, 
  \qquad z\in\C\setminus\D. 
\end{equation}
These definitions imply that $\mathcal{T}_{n}(\lambda) \subset
\mathcal{D}_{n}(\lambda)$ and $\overline{\mathcal{T}}_{n}(\lambda) \subset
\overline{\mathcal{D}}_{n}(\lambda)$. The inclusions are strict, since $z^{n}
\in \mathcal{D}_{n}(\lambda)$ for all $\lambda\in(0,\frac{2\pi}{n})$. Moreover,
it readily follows from (\ref{eq:8}) and (\ref{eq:22}) that
$\mathcal{D}_{n}(\lambda)\setminus\mathcal{T}_{n}(\lambda) \subseteq
\pi_{n}(\D)$ and $\overline{\mathcal{D}}_{n}(\lambda) \subseteq
\pi_{n}(\overline{\D})$ for all $\lambda\in(0,\frac{2\pi}{n})$.  We will explain
later (cf. Theorem \ref{sec:an-extens-suffr-4} below) why it is natural to set
\begin{equation}
  \label{eq:44}
  \begin{split}
    &\overline{\mathcal{D}}_{n}({\textstyle \frac{2\pi}{n}}):= \{a(z^{n}-b):
    a\in\C\setminus\{0\}, b\in\overline{\D}\}, \qquad 
    \mathcal{D}_{n}({\textstyle \frac{2\pi}{n}}):=\emptyset,\\
    &\overline{\mathcal{D}}_{n}(0):=\pi_{n}(\overline{\D}),\qquad\mbox{and}\qquad
    \mathcal{D}_{n}(0):=\pi_{n}(\D)\cup\mathcal{T}_{n}(0).
  \end{split}
\end{equation}

These properties of the classes $\overline{\mathcal{D}}_{n}(\lambda)$ show
that the next result is a $q$-extension of Corollary \ref{sec:introduction-3}
which also contains Theorem \ref{sec:an-extens-suffr-1} (for the definition
of the $n$-inverse $f^{*n}$ of $f$ appearing in Statement (\ref{item:3}), see
Section \ref{sec:furth-prop-char} below).

\begin{theorem}
\label{sec:an-extens-suffr-3}
  Let $\lambda\in[0,\frac{2\pi}{n})$. 
  \begin{enumerate}
  \item \label{item:2} If $G$ is a polynomial of degree $n$, then
    $F*_{\lambda}G\in\mathcal{D}_{n}(\lambda)$ for all
    $F\in\overline{\mathcal{D}}_{n}(\lambda)$ if, and only if,
    $G\in\mathcal{D}_{n}(\lambda)$.
  \item \label{item:3} If $\lambda<\mu<\frac{2\pi}{n}$ and if
    $f\in\mathcal{P}\overline{\mathcal{D}}_{n}(\lambda)$ is such that $f + \zeta
    f^{*n}$ is not pre-extremal for any $\zeta\in\T$, then
    $f\in\mathcal{PD}_{n}(\mu)$. 
  \item \label{item:4} Let $f(z) = \sum_{k=0}^{n} a_{k}z^{k}$ be of degree
    $n$ with $|a_{0}|< |a_{n}|$. There is a $\lambda\in
    [0,\frac{2\pi}{n})$ such that $f$ belongs to
    $\mathcal{PD}_{n}(\lambda)\setminus \mathcal{PT}_{n}(\lambda)$ if, and
    only if,
    \begin{equation*}
      \Re \frac{f(z)-a_{0}}{a_{n} z^{n} - a_{0}} > \frac{1}{2}, \qquad 
      z\in\C \setminus\D.
    \end{equation*}
  \end{enumerate}
\end{theorem}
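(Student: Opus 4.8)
The plan is to derive all three statements from a single reformulation of membership in $\overline{\mathcal{D}}_{n}(\lambda)$ and $\mathcal{D}_{n}(\lambda)$ as a \emph{pencil} condition on zeros, and then to transport the circle results (Theorem~\ref{sec:an-extens-suffr-1} and Lemma~\ref{sec:main-result:-an-1}) across the convolution $*_{\lambda}$. Fix $\lambda\in(0,\frac{2\pi}{n})$ and, for $t\in\R\cup\{\infty\}$, set $F^{(t)}:=e^{-in\lambda/2}F_{+}-tF_{-}$, where $F_{\pm}$ are as in (\ref{eq:9}). Since $\C\setminus\oD$ is connected and $e^{-in\lambda/2}F_{+}(z)/F_{-}(z)\to e^{in\lambda/2}$ as $z\to\infty$, a value lying in the open upper half-plane $\U$ because $n\lambda/2\in(0,\pi)$, the image of $\C\setminus\oD$ falls in $\U$ as soon as it avoids $\R$. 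Hence (\ref{eq:8}) holds if and only if every $F^{(t)}$ has all its zeros in $\oD$, and the stronger condition (\ref{eq:22}) holds if and only if every $F^{(t)}$ has all its zeros in $\D$. First I would record two identities that make this picture compatible with the convolution: from (\ref{eq:4}) and (\ref{eq:9}) one gets $(F*_{\lambda}G)_{\pm}=F_{\pm}*_{\lambda}G=F*_{\lambda}G_{\pm}$, and since by (\ref{eq:3}) the numbers $C_{k}^{(n)}(\lambda)$ are real and satisfy $C_{k}^{(n)}(\lambda)=C_{n-k}^{(n)}(\lambda)$, one also gets $(F*_{\lambda}G)^{*n}=F^{*n}*_{\lambda}G^{*n}$. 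In particular the pencil commutes with $*_{\lambda}$: $e^{-in\lambda/2}(F*_{\lambda}G)_{+}-t(F*_{\lambda}G)_{-}=F*_{\lambda}\bigl(e^{-in\lambda/2}G_{+}-tG_{-}\bigr)$.

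For the necessity in~(\ref{item:2}) nothing is needed beyond the observation that $Q_{n}(\lambda;z)=\sum_{k}C_{k}^{(n)}(\lambda)z^{k}$ is the $*_{\lambda}$-identity and lies in $\overline{\mathcal{T}}_{n}(\lambda)\subset\overline{\mathcal{D}}_{n}(\lambda)$; taking $F=Q_{n}(\lambda;\cdot)$ forces $G=F*_{\lambda}G\in\mathcal{D}_{n}(\lambda)$. The real content is the sufficiency. The central obstruction is that $*_{\lambda}$ is \emph{not} the Grace--Szeg\"o convolution and does not preserve disks, so neither Corollary~\ref{sec:introduction-3} nor a bare Hurwitz perturbation applies; the only leverage available is Suffridge's circle theorem~\ref{item:19}, which speaks about $\overline{\mathcal{T}}_{n}(\lambda)$ and $\mathcal{T}_{n}(\lambda)$. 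My strategy is therefore to pass from the disk to the circle through the reverse polynomial. The key lemma I would establish is that $F\in\overline{\mathcal{D}}_{n}(\lambda)$ (respectively $F\in\mathcal{D}_{n}(\lambda)$) if and only if $F+\zeta F^{*n}\in\overline{\mathcal{T}}_{n}(\lambda)$ (respectively $\in\mathcal{T}_{n}(\lambda)$) for every $\zeta\in\T$; this rests on the classical fact that $P+\zeta P^{*n}$ has all its zeros on $\T$ whenever $P\in\pi_{n}(\oD)$, combined with the pencil reformulation above to control the separation angle. Granting this, the case $G\in\mathcal{T}_{n}(\lambda)$ is immediate: then $G^{*n}=\eta G$ for some $\eta\in\T$, so $(F*_{\lambda}G)+\zeta(F*_{\lambda}G)^{*n}=(F+\zeta\eta F^{*n})*_{\lambda}G$, and Theorem~\ref{sec:an-extens-suffr-1}\ref{item:19} places this in $\mathcal{T}_{n}(\lambda)$ for every $\zeta$, whence $F*_{\lambda}G\in\mathcal{D}_{n}(\lambda)$.

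The case in which $G\in\mathcal{D}_{n}(\lambda)$ genuinely has zeros inside $\D$ is where I expect the main difficulty. Applying the characterizing lemma to both factors, Suffridge's theorem gives $(F+\zeta F^{*n})*_{\lambda}(G+\xi G^{*n})\in\mathcal{T}_{n}(\lambda)$ for all $\zeta,\xi\in\T$; expanding and writing $H=F*_{\lambda}G$ and $W=F*_{\lambda}G^{*n}$ (so that $F^{*n}*_{\lambda}G=W^{*n}$), this equals $\bigl(H+\zeta\xi H^{*n}\bigr)+\bigl(\xi W+\zeta W^{*n}\bigr)$. To conclude I must recover $H\in\mathcal{D}_{n}(\lambda)$, i.e.\ $H+\eta H^{*n}\in\mathcal{T}_{n}(\lambda)$ for all $\eta\in\T$, from this two-parameter family. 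The cross terms $\xi W+\zeta W^{*n}$ do not vanish and cannot be averaged away, since the Suffridge classes are not convex; instead I would show directly that membership of the \emph{whole} family in $\mathcal{T}_{n}(\lambda)$ forces each companion $H+\eta H^{*n}$ into $\mathcal{T}_{n}(\lambda)$, by a specialization of the pair $(\zeta,\xi)$ together with a Hermite--Biehler argument on the associated pencil. Making this step precise, and checking the degenerate cases in which degrees drop or zeros reach $\T$, is the technical heart of the proof.

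Parts~(\ref{item:3}) and~(\ref{item:4}) I would then deduce from~(\ref{item:2}) and the same reverse-polynomial device. For~(\ref{item:3}), the monotonicity of the classes in $\lambda$ reduces the strict-improvement claim to Suffridge's~\ref{item:20} applied to each self-inversive companion $f+\zeta f^{*n}$: the hypothesis that $f+\zeta f^{*n}$ is pre-extremal for no $\zeta\in\T$ is exactly what prevents any companion from degenerating, so that~\ref{item:20} upgrades $f+\zeta f^{*n}\in\mathcal{P}\overline{\mathcal{T}}_{n}(\lambda)$ to $\mathcal{PT}_{n}(\mu)$ uniformly in $\zeta$, whence $f\in\mathcal{PD}_{n}(\mu)$ by the characterizing lemma. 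For~(\ref{item:4}), under the normalization $|a_{0}|<|a_{n}|$ the reverse symmetry singles out $a_{n}z^{n}-a_{0}$ as the correct denominator, and I would translate the positive-imaginary-part condition (\ref{eq:22}) into the inequality $\Re\frac{f(z)-a_{0}}{a_{n}z^{n}-a_{0}}>\frac12$ by a fixed Cayley transform carrying $\U$ onto the half-plane $\Re z>\frac12$, exactly as~\ref{item:21} is obtained from Lemma~\ref{sec:main-result:-an-1}. The existence of an admissible $\lambda\in[0,\frac{2\pi}{n})$ and the separation of the boundary $\mathcal{PD}_{n}(\lambda)\setminus\mathcal{PT}_{n}(\lambda)$ would follow from the strict-improvement result~(\ref{item:3}) together with the endpoint conventions (\ref{eq:44}).
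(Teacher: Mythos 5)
Your overall strategy --- reducing membership in $\mathcal{D}_{n}(\lambda)$ and $\overline{\mathcal{D}}_{n}(\lambda)$ to the self-inversive companions $F+\zeta F^{*n}$ lying in $\mathcal{T}_{n}(\lambda)$, resp.\ $\overline{\mathcal{T}}_{n}(\lambda)$, and then invoking Suffridge's convolution theorem --- is exactly the paper's (your ``key lemma'' is Theorem~\ref{sec:an-extens-suffr-4}, proved there via Hermite--Biehler and Hermite--Kakeya), and your treatments of the necessity, of the case $G\in\mathcal{T}_{n}(\lambda)$, and of part~(\ref{item:3}) match the paper. But in part~(\ref{item:2}) you stop precisely at the step that carries all the difficulty: when both $F$ and $G$ have zeros in $\D$, you arrive at the two-parameter family $H+\zeta\xi H^{*n}+\xi W+\zeta W^{*n}\in\mathcal{T}_{n}(\lambda)$ and assert that ``a specialization of the pair $(\zeta,\xi)$'' will recover $H+\eta H^{*n}\in\mathcal{T}_{n}(\lambda)$. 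No specialization can do this: for any fixed $(\zeta,\xi)$ the cross term $\xi W+\zeta W^{*n}$ does not vanish identically in general, so no single member of your family ever equals a companion of $H$. What the paper actually does is a monotone-deformation and differencing argument: it first settles the mixed cases (one factor with zeros on $\T$) so as to show that the pencils $B_{t}=(e^{-in\lambda/4}F_{+}-xe^{in\lambda/4}F_{-})*_{\lambda}(G+\zeta e^{it}G^{*n})$ and the analogous $C_{s}$ lie in $\pi_{n}(\D)$; it deduces that the zeros of $A_{s,t}=e^{is}B_{t}-\zeta e^{it}B_{t}^{*n}$ move on $\T$ monotonically in opposite angular directions as $s$ and $t$ increase; it concludes $A_{s,0}\curlyvee A_{0,s}$ for small $s>0$; and it then extracts the companion pencil of $F*_{\lambda}G+\zeta(F*_{\lambda}G)^{*n}$ as the Hermite--Kakeya difference $(A_{s,0}-A_{0,s})/(e^{is}-1)$. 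This step is the substance of the theorem and is absent from your outline.

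A secondary gap is in part~(\ref{item:4}): the inequality $\Re\frac{f(z)-a_{0}}{a_{n}z^{n}-a_{0}}>\frac12$ is a condition on the pre-coefficient polynomial $f$, not on $F_{\lambda}=f*Q_{n}(\lambda;\cdot)$, so it cannot be obtained from (\ref{eq:22}) by ``a fixed Cayley transform.'' The paper obtains it by rewriting (\ref{eq:22}) as $\Re e^{-in\mu/2}z\Delta_{\mu}^{n}[F_{\mu}](z)/F_{\mu}(e^{-i\mu/2}z)>\frac12$, applying part~(\ref{item:3}) to push $\mu\to\frac{2\pi}{n}$, and using the limits $C_{k}^{(n)}(\frac{2\pi}{n})=0$ and $C_{k}^{(n-1)}(\frac{2\pi}{n})=1$ for $1\le k\le n-1$, followed by an openness (coefficient perturbation) argument to upgrade the strict inequality from $\C\setminus\overline{\D}$ to $\C\setminus\D$; the converse then needs a continuity argument to produce an admissible $\mu>0$. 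You gesture at the limit $\lambda\to\frac{2\pi}{n}$ via the endpoint conventions (\ref{eq:44}), but supply neither the coefficient asymptotics nor the boundary step.
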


Even though this is the desired extension of Suffridge's theorem to
polynomials with zeros in $\D$, Theorem \ref{sec:an-extens-suffr-3} remains
unsatisfactory, since we do not have an explicit description of the zero
location of polynomials in $\overline{\mathcal{D}}_{n}(\lambda)$. Such a
description would in particular be important for obtaining a $q$-analog of a
very useful reformulation of Grace's theorem which is due to Walsh
(cf. \cite[Thm. 3.4.1b]{rahman} or \cite[Thm. 5.2.7]{sheil}), or for finding
Suffridge-type extensions of the special cases of Grace's theorem which are
presented in Section \ref{sec:cont-conn-betw} below.

We therefore regard the following as the main open problem concerning the
classes $\overline{\mathcal{D}}_{n}(\lambda)$.

\begin{problem}
  \label{sec:main-result:-an}
  Given $\lambda\in[0,\frac{2\pi}{n})$, is it possible to describe the zero
  configurations of polynomials in $\overline{\mathcal{D}}_{n}(\lambda)$?
\end{problem}

In the next section we will present several further properties of the classes
$\overline{\mathcal{D}}_{n}(\lambda)$, hoping that this will be of help for
finding an answer to Problem \ref{sec:main-result:-an}.

\subsection{Further properties and characterizations of the classes
  $\overline{\mathcal{D}}_{n}(\lambda)$}
\label{sec:furth-prop-char}

By definition, a polynomial $F\in\mathcal{D}_{n}(\lambda)$ either has all its
zeros on $\T$ (in this case $F$ belongs to $\mathcal{T}_{n}(\lambda)$), or
all in $\D$ (in this case $F$ satisfies (\ref{eq:22})). Even though this does
not follow directly from their definition, the classes
$\overline{\mathcal{D}}_{n}(\lambda)$ can be partitioned in the same way.

\begin{theorem}
  \label{sec:an-extens-suffr-2}
  If $F\in\overline{\mathcal{D}}_{n}(\lambda)$, $\lambda\in(0,\frac{2\pi}{n}]$,
  has one zero on $\T$, then $F\in\overline{\mathcal{T}}_{n}(\lambda)$ (and thus
  has all zeros on $\T$).
\end{theorem}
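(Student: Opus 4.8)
The plan is to pass to the rational function $g(z):=e^{-in\lambda/2}F_{+}(z)/F_{-}(z)$ coming from (\ref{eq:9}) and (\ref{eq:8}), and to show via Lemma \ref{sec:main-result:-an-1} that the extra hypothesis forces $g$ to map $\C\setminus\oD$ \emph{onto} the open upper half-plane $\U$. First I would dispose of the endpoint $\lambda=\frac{2\pi}{n}$ directly: by (\ref{eq:44}) such an $F$ equals $a(z^{n}-b)$ with $b\in\oD$, and a zero on $\T$ means $|b|=1$, so the zeros are the $n$-th roots of $b$, equally spaced on $\T$, i.e. $F\in\overline{\mathcal{T}}_{n}(\frac{2\pi}{n})$. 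For $\lambda\in(0,\frac{2\pi}{n})$, writing the zeros of $F$ as $z_{1},\dots,z_{n}\in\oD$ (recall $\overline{\mathcal{D}}_{n}(\lambda)\subseteq\pi_{n}(\oD)$) and factoring gives
\[
g(z)=e^{in\lambda/2}\prod_{j=1}^{n}\frac{z-e^{-i\lambda/2}z_{j}}{z-e^{i\lambda/2}z_{j}},
\]
so all zeros and poles of $g$ lie in $\oD$, $g(\infty)=e^{in\lambda/2}\in\U$, and $\Im g>0$ on $\C\setminus\oD$ by (\ref{eq:8}). By Lemma \ref{sec:main-result:-an-1} it then suffices to prove that $g$ is surjective onto $\U$.

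The hypothesis that $F$ has a zero $z_{1}\in\T$ means $g$ has a pole at $\beta_{1}:=e^{i\lambda/2}z_{1}\in\T$ (after cancelling any coincidences $e^{-i\lambda/2}z_{i}=e^{i\lambda/2}z_{j}$; a short separate argument shows a boundary zero always leaves a boundary pole in the reduced function, the only alternative being a full $\lambda$-spaced block already lying on $\T$). The key local input is a Julia--Carath\'eodory type computation at this boundary pole: expanding $g(z)\sim\operatorname{Res}_{\beta_{1}}g\,/(z-\beta_{1})$ and demanding $\Im g>0$ in every exterior direction at $\beta_{1}$ forces $\arg(\operatorname{Res}_{\beta_{1}}g)=\arg\beta_{1}+\frac{\pi}{2}$. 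Two consequences follow: $g$ carries a small exterior neighbourhood of $\beta_{1}$ onto a neighbourhood of $\infty$ in $\U$, so that $g(\C\setminus\oD)\supseteq\{w\in\U:|w|>M\}$ for some $M$; and $\Im g$ passes from positive (outside) to negative (inside) across $\T$ at $\beta_{1}$, so $\T$ lies in the zero set of $\Im g$ near $\beta_{1}$.

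Since $g(\C\setminus\oD)$ is open, connected, contained in $\U$, and already contains the part of $\U$ near $\infty$, surjectivity can fail only if some finite boundary value lies in $\U$, i.e. only if $\Im g(\zeta)>0$ for some $\zeta\in\T$. Thus the theorem reduces to the assertion that $\Im g\equiv0$ on $\T$ (equivalently, that every $z_{j}$ lies on $\T$). To establish this I would consider the connected component $E^{*}\supseteq\C\setminus\oD$ of $\{\Im g>0\}$: the restriction $g\colon E^{*}\to\U$ is proper, hence a branched cover onto $\U$, and its boundary correspondence $\partial E^{*}\to\R\cup\{\infty\}$ has a well-defined degree $m$. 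Because every pole of $g$ lies in $\oD$, one can identify which poles sit on $\partial E^{*}$ and balance them against $m$; combined with the sign change of $\Im g$ across $\T$ at $\beta_{1}$, this should force $\partial E^{*}=\T$, i.e. $E^{*}=\C\setminus\oD$, whence $g$ is real on all of $\T$ and the proof closes through Lemma \ref{sec:main-result:-an-1}.

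The main obstacle is exactly this last, global step. A single boundary pole yields only one local (residue) identity, which for $n\ge 3$ is far from enough to pin the remaining zeros to $\T$, so one genuinely needs the positivity of $\Im g$ on \emph{all} of $\C\setminus\oD$ to prevent $\{\Im g>0\}$ from protruding across $\T$ into $\D$; carrying out the degree and boundary-correspondence bookkeeping cleanly---while handling higher-order boundary zeros, pole--zero cancellations, and the interior poles---is where the real work lies. A possible alternative to the argument-principle route is to use Theorem \ref{sec:an-extens-suffr-3}\ref{item:3}: writing $F=f*Q_{n}(\lambda;z)$, outside the exceptional pre-extremal case one obtains $f*Q_{n}(\mu;z)\in\mathcal{D}_{n}(\mu)$ for every $\mu\in(\lambda,\frac{2\pi}{n})$, and the dichotomy built into $\mathcal{D}_{n}(\mu)$ together with a limit $\mu\to\lambda^{+}$ would again give $F\in\overline{\mathcal{T}}_{n}(\lambda)$, the finitely many exceptional pre-extremal polynomials being checked directly; the delicate point there is controlling the limit so as to exclude the interior-zero alternative of the dichotomy.
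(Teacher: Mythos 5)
Your argument is not complete: everything hinges on the final ``global'' step --- showing that the component $E^{*}$ of $\{\Im g>0\}$ containing $\C\setminus\oD$ is exactly $\C\setminus\oD$, equivalently that $g$ maps $\C\setminus\oD$ \emph{onto} the upper half-plane --- and this is precisely the step you leave as a plan (``this should force $\partial E^{*}=\T$''). The local residue computation at the boundary pole only tells you that $g(\C\setminus\oD)$ contains a neighbourhood of $\infty$ in the upper half-plane and that $\Im g$ changes sign across $\T$ near that one point; it gives no control over the remaining $n-1$ zeros, as you yourself note. The proposed degree/pole bookkeeping for the proper map $g\colon E^{*}\to\{\Im w>0\}$ is delicate (boundary poles, multiplicities, possible pole--zero cancellations) and is exactly where the whole content of the theorem sits, so as written the proposal is a programme rather than a proof. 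The fallback via Theorem \ref{sec:an-extens-suffr-3}\ref{item:3} is likewise only sketched, and its ``delicate point'' (excluding the interior-zero alternative in the limit $\mu\to\lambda^{+}$, plus the pre-extremal exceptions) is again the substance of the claim.

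The paper takes a completely different and much shorter route, arguing contrapositively through machinery it has already set up. By Theorem \ref{sec:an-extens-suffr-4}, if $F\in\overline{\mathcal{D}}_{n}(\lambda)\setminus\overline{\mathcal{T}}_{n}(\lambda)$ with $\lambda\in(0,\frac{2\pi}{n})$, then $F+\zeta F^{*n}\in\overline{\mathcal{T}}_{n}(\lambda)$ for all $\zeta\in\T$; since $\lambda>0$ forces simple, separated zeros, $\overline{\mathcal{T}}_{n}(\lambda)\subset\mathcal{T}_{n}(0)$, and the classical Blaschke-product fact (\ref{eq:50}) then yields $F\in\pi_{n}(\D)$, i.e.\ $F$ has no zero on $\T$. (The endpoint $\lambda=\frac{2\pi}{n}$ is read off from the definition (\ref{eq:44}), as in your first paragraph.) If you want to salvage your direct approach, the cleanest repair is to replace the argument-principle bookkeeping by exactly this appeal to Theorem \ref{sec:an-extens-suffr-4} together with the Hermite--Kakeya/Hermite--Biehler input already recorded in Lemma \ref{sec:self-invers-polyn-2}; otherwise the surjectivity step must be written out in full before the proof can be accepted.
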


Note, however, that $\overline{\mathcal{T}}_{n}(\lambda)$ is contained in the
closure of $\mathcal{D}_{n}(\lambda)\setminus \mathcal{T}_{n}(\lambda)$,
since by (\ref{eq:22}), Lemma \ref{sec:main-result:-an-1}, and
the maximum principle, we have $F(rz)\in\mathcal{D}_{n}(\lambda)\setminus
\mathcal{T}_{n}(\lambda)$ for every $F\in\overline{\mathcal{T}}_{n}(\lambda)$
and all $r>1$.

The \emph{$n$-inverse} of a polynomial $F$ of degree $\leq n$ is defined as
$F^{*n}(z):=z^{n}\overline{F(1/\overline{z})}$.  It is a well known property
of finite Blaschke products that for a polynomial $F$ of degree $n$ with at
least one zero in $\D$ we have
\begin{equation}
  \label{eq:49}
  F \in \pi_{n}(\overline{\D}) \quad \mbox{if, and only if,} \quad
  F+\zeta F^{*n} \in \overline{\mathcal{T}}_{n}(0) \quad \mbox{for all} \quad
  \zeta \in \T,
\end{equation}
and
\begin{equation}
  \label{eq:50}
  F \in \pi_{n}(\D) \quad \mbox{if, and only if,} \quad
  F+\zeta F^{*n} \in \mathcal{T}_{n}(0) \quad \mbox{for all} \quad
  \zeta \in \T. 
\end{equation}
The next result therefore justifies the definitions (\ref{eq:44}) from above. It
also shows that the classes $\overline{\mathcal{D}}_{n}(\lambda)$ and
$\mathcal{D}_{n}(\lambda)$ are decreasing with respect to $\lambda$.

\begin{theorem}[First equivalent characterization of the classes
  $\overline{\mathcal{D}}_{n}(\lambda)$]
  \label{sec:an-extens-suffr-4}
  Let $\lambda\in(0,\frac{2\pi}{n})$ and suppose
  $F\in\pi_{n}(\overline{\D})$.  Then 
  \begin{equation}
    \label{eq:51}
    F\in \overline{\mathcal{D}}_{n}(\lambda)\setminus
    \overline{\mathcal{T}}_{n}(\lambda)\quad \mbox{if, and only if,} 
    \quad F+\zeta F^{*n} \in
    \overline{\mathcal{T}}_{n}(\lambda)\quad\mbox{for all}\quad \zeta\in\T,
  \end{equation}
  and 
  \begin{equation}
    \label{eq:52}
    F\in \mathcal{D}_{n}(\lambda)\setminus\mathcal{T}_{n}(\lambda) 
    \quad \mbox{if, and only if,} \quad
    F+\zeta F^{*n} \in \mathcal{T}_{n}(\lambda) \quad\mbox{for all}\quad 
    \zeta\in\T.
  \end{equation}
  Moreover, if $F\in \overline{\mathcal{D}}_{n}(\lambda)\setminus
  \overline{\mathcal{T}}_{n}(\lambda)$ and if there is a $z_{0}\in\T$ such that
  \begin{equation*}
    e^{-in\lambda/2} \frac{F_{+}(z_{0})}{F_{-}(z_{0})} = x\in \R,
  \quad \mbox{then} \quad
    \lim_{z\rightarrow z_{0}} e^{-in\lambda/2}\frac{(F+\zeta F^{*n})_{+}(z)}
    {(F+\zeta F^{*n})_{-}(z)} = x \quad \mbox{for all} \quad \zeta\in\T.
  \end{equation*}
\end{theorem}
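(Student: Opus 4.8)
The plan is to reduce both equivalences to the known Blaschke-product characterizations \eqref{eq:49} and \eqref{eq:50} by means of a single identity for the $n$-inverse. Write $\zeta':=\zeta e^{-in\lambda/2}$ and, for $x\in\R$, introduce the auxiliary polynomial
\[
A_x:=e^{-in\lambda/2}F_+-xF_-,
\]
which has degree $n$ for every real $x$, since its leading coefficient is $a_n(1-xe^{-in\lambda/2})$ and $e^{in\lambda/2}\notin\R$ because $n\lambda/2\in(0,\pi)$. A direct computation of the $n$-inverse, using the relations $(F^{*n})_{\pm}=e^{\pm in\lambda/2}(F_\pm)^{*n}$ coming from \eqref{eq:9} together with the reality of $x$, gives $A_x^{*n}=e^{in\lambda/2}F_+^{*n}-xF_-^{*n}$ and hence the key identity
\[
e^{-in\lambda/2}(F+\zeta F^{*n})_+ - x\,(F+\zeta F^{*n})_- \;=\; A_x+\zeta'A_x^{*n}.
\]
Thus, with $G:=F+\zeta F^{*n}$, a point $z$ in $\C\setminus\oD$ satisfies $e^{-in\lambda/2}G_+(z)/G_-(z)=x$ precisely when $z$ is a zero of $A_x+\zeta'A_x^{*n}$.

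For the forward implication in \eqref{eq:51} I would argue as follows. If $F\in\overline{\mathcal D}_n(\lambda)\setminus\overline{\mathcal T}_n(\lambda)$, then Theorem \ref{sec:an-extens-suffr-2} gives $F\in\pi_n(\D)$, so $F_-$ has no zeros in $\C\setminus\D$ and \eqref{eq:8} forces $A_x$ to have no zeros in $\C\setminus\oD$; that is, $A_x\in\pi_n(\oD)$ for every $x\in\R$. Since $A_x+\zeta'A_x^{*n}$ is self-inversive, \eqref{eq:49} (applied to $A_x$, with the trivial self-inversive case $A_x^{*n}=\eta A_x$ treated separately) shows it lies in $\overline{\mathcal T}_n(0)$, so all its zeros lie on $\T$. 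By the identity above, $e^{-in\lambda/2}G_+/G_-$ then omits every real value on $\C\setminus\oD$; being continuous and nonreal there and equal to $e^{in\lambda/2}$ at $\infty$, it maps $\C\setminus\oD$ into the open upper half-plane, so $G\in\overline{\mathcal D}_n(\lambda)$. Since \eqref{eq:49} also shows that $G$ has a zero on $\T$, Theorem \ref{sec:an-extens-suffr-2} yields $G\in\overline{\mathcal T}_n(\lambda)$.

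For the reverse implication I would instead fix $z\in\C\setminus\oD$ and let $\zeta$ run over $\T$. Setting $v:=\zeta'F_-^{*n}(z)/F_-(z)$ and $\rho:=|v|$, a short computation rewrites the quotient as a Möbius image,
\[
e^{-in\lambda/2}\frac{G_+(z)}{G_-(z)}=\frac{H+v\tilde H}{1+v},\qquad
H:=e^{-in\lambda/2}\frac{F_+(z)}{F_-(z)},\quad \tilde H:=e^{in\lambda/2}\frac{F_+^{*n}(z)}{F_-^{*n}(z)}.
\]
The hypothesis gives a degree-$n$ polynomial for every $\zeta$, so $F$ is not self-inversive; with \eqref{eq:49} this yields $F\in\pi_n(\oD)$ with a zero in $\D$, whence $\rho<1$ on $\C\setminus\oD$ (a product of Blaschke moduli). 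Consequently the pole $v=-1$ lies outside $\{|v|\le\rho\}$, and as $\zeta$ varies the values $e^{-in\lambda/2}G_+(z)/G_-(z)$ trace a circle bounding a disk that contains $H$ (the image of $v=0$). Because $G\in\overline{\mathcal T}_n(\lambda)$ for every $\zeta$, that circle lies in the open upper half-plane, hence so does the disk it bounds (convexity), and therefore $\Im H>0$; thus \eqref{eq:8} holds and $F\in\overline{\mathcal D}_n(\lambda)$. A boundary zero of $F$ would, by Theorem \ref{sec:an-extens-suffr-2}, force $F\in\overline{\mathcal T}_n(\lambda)$, contradicting the zero in $\D$; so $F\in\pi_n(\D)$ and $F\notin\overline{\mathcal T}_n(\lambda)$.

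Statement \eqref{eq:52} I would prove by the same two arguments with $\oD$ replaced by $\D$, \eqref{eq:8} by \eqref{eq:22}, and \eqref{eq:49} by \eqref{eq:50}: now $A_x\in\pi_n(\D)$, so $A_x+\zeta'A_x^{*n}\in\mathcal T_n(0)$ has $n$ simple zeros on $\T$ for every $x\in\R$, which exhibits $e^{-in\lambda/2}G_+/G_-$ as an $n$-fold covering of $\R\cup\{\infty\}$ on $\T$ and hence, by Lemma \ref{sec:main-result:-an-1}, as an $n$-to-$1$ map of $\C\setminus\oD$ onto the upper half-plane, i.e. $G\in\mathcal T_n(\lambda)$. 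For the limit assertion, if $z_0\in\T$ satisfies $e^{-in\lambda/2}F_+(z_0)/F_-(z_0)=x\in\R$, then $A_x(z_0)=0$, and since $z_0\in\T$ the $n$-inverse $A_x^{*n}$ also vanishes at $z_0$; hence $A_x+\zeta'A_x^{*n}$ vanishes at $z_0$ for every $\zeta$, which by the key identity gives $\lim_{z\to z_0}e^{-in\lambda/2}G_+(z)/G_-(z)=x$ (the degenerate case in which $G_+$ and $G_-$ both vanish at $z_0$ being settled by comparing vanishing orders). The main obstacle is the forward direction, precisely the step $\Im(e^{-in\lambda/2}G_+/G_-)>0$ on $\C\setminus\oD$: a naive open-mapping argument fails because this quotient could a priori take real values at interior points of $\C\setminus\oD$. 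The identity $e^{-in\lambda/2}G_+-xG_-=A_x+\zeta'A_x^{*n}$ is exactly what converts the statement ``omits every real value'' into a Blaschke statement about $A_x$ to which \eqref{eq:49} and \eqref{eq:50} apply, and getting this reduction right is the crux of the proof.
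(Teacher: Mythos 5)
Your key identity $e^{-in\lambda/2}(F+\zeta F^{*n})_{+}-x\,(F+\zeta F^{*n})_{-}=A_{x}+\zeta'A_{x}^{*n}$ is exactly the reduction the paper performs (with the symmetric normalization $G_{x}=e^{-in\lambda/4}F_{+}-xe^{in\lambda/4}F_{-}$, chosen so that the two pieces have the same self-inversive constant), and your converse via the M\"obius image of the circle $|v|=\rho<1$ is a correct, genuinely different packaging of that direction; the paper instead runs the whole proof as a single chain of equivalences. The serious problem is that you invoke Theorem \ref{sec:an-extens-suffr-2} three times, and in the paper that theorem is itself deduced from Theorem \ref{sec:an-extens-suffr-4} together with (\ref{eq:50}), so it is not available here. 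Two of the uses are harmless (the hypothesis $F\in\pi_{n}(\oD)$ already guarantees that $F_{-}$ has no zeros in $\C\setminus\oD$, and in the converse $F\notin\overline{\mathcal{T}}_{n}(\lambda)$ follows simply from $F$ having a zero in $\D$), but the third is load-bearing: in the forward direction you pass from ``$G:=F+\zeta F^{*n}\in\overline{\mathcal{D}}_{n}(\lambda)$ and $G$ has a zero on $\T$'' to $G\in\overline{\mathcal{T}}_{n}(\lambda)$ by citing Theorem \ref{sec:an-extens-suffr-2}; note also that membership in $\overline{\mathcal{D}}_{n}(\lambda)$ only gives a map \emph{into} the upper half-plane, whereas Lemma \ref{sec:main-result:-an-1} requires \emph{onto}, so even this detour leaves a surjectivity step unaccounted for. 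The repair is one line from your identity: since $e^{-in\lambda/2}G_{+}-xG_{-}=A_{x}+\zeta'A_{x}^{*n}\in\overline{\mathcal{T}}_{n}(0)$ for every $x\in\R$, the Hermite--Kakeya theorem (Lemma \ref{sec:self-invers-polyn-2}) gives $G_{+}\closedcurlyvee G_{-}$, and Lemma \ref{sec:defin-prel-1} then gives $G\in\overline{\mathcal{T}}_{n}(\lambda)$ directly; the strict versions yield (\ref{eq:52}) without the covering-degree argument you only sketch.

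The second gap is the ``trivial self-inversive case'' of $A_{x}$, which you defer but never treat: (\ref{eq:49}) and (\ref{eq:50}) require the polynomial to have at least one zero in $\D$, and if some $A_{x}$ lay in $\pi_{n}(\T)$ then $A_{x}+\zeta'A_{x}^{*n}$ would vanish identically for one $\zeta'$ and the argument would collapse. The paper excludes this by first showing that $F$ itself has a zero in $\D$ in both directions (in the converse because $F+\zeta F^{*n}$ has degree $n$ for every $\zeta$, so $F$ cannot be a unimodular multiple of a self-inversive polynomial) and then computing $|(G_{x}/G_{x}^{*n})(0)|=|(F/F^{*n})(0)|<1$, which in your normalization amounts to the identity $|e^{-in\lambda/2}-x|=|1-xe^{-in\lambda/2}|$ for real $x$, giving $|A_{x}(0)|=|F(0)/a_{n}|\cdot|a_{n}(1-xe^{-in\lambda/2})|$ and hence a zero of $A_{x}$ in $\D$. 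With these two repairs your forward direction coincides with the paper's, and your converse stands as a valid alternative to it.
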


A polynomial $F$ of degree $\leq n$ is called \emph{$n$-self-inversive} if $F
= F^{*n}$. The zeros of $n$-self-inversive polynomials lie symmetrically
around $\T$, and for every $F$, lying in the set $\mathcal{ST}_{n}$ of
polynomials of degree $\leq n$ whose zeros lie symmetrically around $\T$,
there is a uniquely determined $c_{F}\in \{e^{it}:t\in[0,\pi)\}$ such that
$c_{F}F$ is $n$-self-inversive, i.e. such that
\begin{equation}
  \label{eq:55}
  (c_{F}F)^{*n} =  c_{F} F.
\end{equation}
Of course, every $F\in\overline{\mathcal{T}}_{n}(\lambda)$ belongs to
$\mathcal{ST}_{n}$ and thus, for such $F$,
\begin{equation}
  \label{eq:53}
  F + \zeta F^{*n} = (1+\zeta c_{F}^{2})F \in\overline{\mathcal{T}}_{n}(\lambda)
\end{equation}
for all except one $\zeta\in\T$. Hence, essentially Theorem
\ref{sec:an-extens-suffr-4} also holds for the classes
$\overline{\mathcal{T}}_{n}(\lambda)$.

If $F(z)= \sum_{k=0}^{n} C_{k}^{(n)}(\lambda) a_{k} z^{k}$ is 
of degree $\leq n$ and $\lambda\in(0,\frac{2\pi}{n})$, then it follows from
(\ref{eq:3}) that
\begin{equation}
  \label{eq:41}
  \Delta_{\lambda}^{n}[F](z) := \frac{F_{+}(z) - F_{-}(z)}{2iz
  \sin \frac{n\lambda}{2}} = \sum_{k=0}^{n-1} C_{k}^{(n-1)}(\lambda)  a_{k+1} z^{k}.
\end{equation}
The operator $\Delta_{\lambda}^{n}$ can be used to characterize the classes
$\overline{\mathcal{T}}_{n}(\lambda)$ (cf. \cite{suffridge} or
\cite[Thm. 7.5.1]{sheil}).

\begin{theorem}[Suffridge's $q$-extension of the Gau\ss-Lucas theorem for
  $\overline{\mathcal{T}}_{n}(\lambda)$]
  \label{sec:class-mathc-mathc}
  Let $F\in \mathcal{ST}_{n}$ and $\lambda\in[0,\frac{2\pi}{n})$. Then $F\in
  \overline{\mathcal{T}}_{n}(\lambda)$ if, and only if, $\Delta_{\lambda}^{n}[F]
  \in \pi_{n-1}(\overline{\D})$. Furthermore, $F\in
  \mathcal{T}_{n}(\lambda)$ if, and only if, $\Delta_{\lambda}^{n}[F]
  \in \pi_{n-1}(\D)$. 
\end{theorem}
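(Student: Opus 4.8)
The plan is to work entirely through the equivalent formulation provided by Lemma~\ref{sec:main-result:-an-1}. Since $\overline{\mathcal{T}}_{n}(\lambda)$ and $\mathcal{T}_{n}(\lambda)$, as well as the zero set of $\Delta_{\lambda}^{n}[F]$, are unchanged when $F$ is multiplied by a nonzero constant, and since $F\in\mathcal{ST}_{n}$, I may assume $F$ is $n$-self-inversive, i.e. $F=F^{*n}$. I abbreviate $R(z):=e^{-in\lambda/2}F_{+}(z)/F_{-}(z)$, so that Lemma~\ref{sec:main-result:-an-1} reads: $F\in\overline{\mathcal{T}}_{n}(\lambda)$ exactly when $R$ maps $\C\setminus\oD$ onto the open upper half-plane, and $F\in\mathcal{T}_{n}(\lambda)$ exactly when this map is $n$-to-$1$. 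The forward implications are immediate. If $R$ takes only values in the open upper half-plane on $\C\setminus\oD$, then, because $e^{-in\lambda/2}$ lies in the open \emph{lower} half-plane for $\lambda\in(0,\tfrac{2\pi}{n})$, the equation $R(z)=e^{-in\lambda/2}$ --- equivalently $F_{+}(z)=F_{-}(z)$, i.e. $\Delta_{\lambda}^{n}[F](z)=0$ for $z\neq0$ --- has no solution in $\C\setminus\oD$, so $\Delta_{\lambda}^{n}[F]\in\pi_{n-1}(\oD)$. If moreover $F\in\mathcal{T}_{n}(\lambda)$, no two zeros of $F$ lie at angular distance exactly $\lambda$, hence $F_{+}$ and $F_{-}$ share no zero and $\Delta_{\lambda}^{n}[F]$ has no zero on $\T$ either, giving $\Delta_{\lambda}^{n}[F]\in\pi_{n-1}(\D)$.

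For the converse I would exploit two symmetries of $R$ coming from self-inversiveness. A direct computation from $\overline{F(1/\overline{\zeta})}=\zeta^{-n}F(\zeta)$ yields the reflection identity $R(1/\overline{z})=\overline{R(z)}$ and shows that $R$ is real-valued on $\T$ wherever it is defined, the exceptional points being the common zeros of $F_{+}$ and $F_{-}$, which occur precisely when $F$ has two zeros at angular distance $\lambda$. Writing $d$ for the number of such common zeros and factoring them out, $R$ becomes a rational map of degree $n-d$ sending $\T$ into $\R\cup\{\infty\}$. Consequently the counting functions $N_{\mathrm{ext}}(\tau):=\#\{z\in\C\setminus\oD:R(z)=\tau\}$ and $N_{\mathrm{int}}(\tau):=\#\{z\in\D:R(z)=\tau\}$ are locally constant off the real axis, satisfy $N_{\mathrm{ext}}(\tau)+N_{\mathrm{int}}(\tau)=n-d$ there, and, by the reflection identity, $N_{\mathrm{ext}}(\tau)=N_{\mathrm{int}}(\overline{\tau})$.

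Now suppose $\Delta_{\lambda}^{n}[F]\in\pi_{n-1}(\oD)$. As above this means $R(z)=e^{-in\lambda/2}$ has no solution in $\C\setminus\oD$, so $N_{\mathrm{ext}}(e^{-in\lambda/2})=0$; since $e^{-in\lambda/2}$ lies in the open lower half-plane, $N_{\mathrm{ext}}\equiv0$ on the entire lower half-plane. From this I would draw three conclusions. First, $R$ has no pole in $\C\setminus\oD$, since a pole would force $R$ to assume lower half-plane values nearby; hence $F$ has all its zeros in $\oD$ and $R$ is holomorphic on $\C\setminus\oD$. Second, by the open mapping theorem $R(\C\setminus\oD)$ is open and, missing the closed lower half-plane, is contained in the open upper half-plane. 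Third, the two relations for $N_{\mathrm{ext}}$ and $N_{\mathrm{int}}$ give $N_{\mathrm{ext}}\equiv n-d\geq1$ on the upper half-plane, where $d\le n-1$ because $\lambda<\tfrac{2\pi}{n}$ prevents $F_{+}$ from being a scalar multiple of $F_{-}$. Thus $R$ maps $\C\setminus\oD$ onto the open upper half-plane, and Lemma~\ref{sec:main-result:-an-1} gives $F\in\overline{\mathcal{T}}_{n}(\lambda)$. If in addition $\Delta_{\lambda}^{n}[F]\in\pi_{n-1}(\D)$, then $\Delta_{\lambda}^{n}[F]$ has no zero on $\T$, forcing $d=0$ and hence $N_{\mathrm{ext}}\equiv n$ on the upper half-plane, so the covering is exactly $n$-to-$1$ and $F\in\mathcal{T}_{n}(\lambda)$. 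The endpoint $\lambda=0$ falls outside this analysis: there $\Delta_{0}^{n}[F]=F'/n$ and the statement is precisely the classical Gau\ss-Lucas theorem together with its converse for self-inversive polynomials, which I would either invoke directly or recover by letting $\lambda\downarrow0$.

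The heart of the argument, and its main difficulty, is the bookkeeping in the last paragraph: correctly tracking the common factor of $F_{+}$ and $F_{-}$ (whose degree $d$ governs the $\overline{\mathcal{T}}$-versus-$\mathcal{T}$ dichotomy), verifying that no preimages escape through $\T$ or to $\infty$ as $\tau$ varies, and converting the locally constant counts into the surjectivity and exact covering degree demanded by Lemma~\ref{sec:main-result:-an-1}. By comparison, the reflection and reality identities for $R$ are routine consequences of $F=F^{*n}$.
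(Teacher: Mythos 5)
The paper contains no proof of this theorem: it is quoted as a known result of Suffridge, with pointers to \cite{suffridge} and to Theorem 7.5.1 of \cite{sheil}, so there is no in-paper argument to compare yours against. Judged on its own terms, your route --- reducing everything to the mapping characterization in Lemma~\ref{sec:main-result:-an-1}, identifying the zeros of $\Delta_{\lambda}^{n}[F]$ away from common zeros of $F_{\pm}$ with the level set $R=e^{-in\lambda/2}$, and then counting preimages using the reflection identity $R(1/\overline{z})=\overline{R(z)}$ --- is sound, and the forward implications are complete. But the part you yourself label the ``heart of the argument'' is announced rather than executed, and two of its points need real care. First, $R(\infty)=e^{in\lambda/2}$ and $R(0)=e^{-in\lambda/2}$, so the finite exterior count $N_{\mathrm{ext}}$ is \emph{not} locally constant on the whole upper half-plane: at $\tau=e^{in\lambda/2}$ a preimage sits at $\infty$. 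You must count on the Riemann sphere, assigning $\infty$ to the exterior and $0$ to the interior; then $z\mapsto 1/\overline{z}$ gives the exact bijection $N_{\mathrm{ext}}(\tau)=N_{\mathrm{int}}(\overline{\tau})$ and the totals add to $n-d$ off $\R\cup\{\infty\}$ (one should also confirm that the ``exactly $n$ preimages'' clause of Lemma~\ref{sec:main-result:-an-1} is meant with this convention). Second, your step ``hence $F$ has all its zeros in $\oD$'' controls only the reduced denominator $H_{-}$ via poles of $R$; the zeros of the common factor $G=\gcd(F_{+},F_{-})$ lying outside $\oD$ must be ruled out separately --- fortunately immediate, since $G$ divides $F_{+}-F_{-}$ and hence $\Delta_{\lambda}^{n}[F]$ --- and one then needs the $\mathcal{ST}_{n}$ symmetry (or the surjectivity clause of the Lemma) to place the zeros on $\T$. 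With these repairs, plus Cohn's theorem for the excluded case $\lambda=0$, the argument closes.
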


From (\ref{eq:41}) one can readily deduce that
\begin{equation*}
  \Delta_{\lambda}^{n}[F]\rightarrow \frac{F'}{n} \qquad \mbox{as} 
  \qquad \lambda\rightarrow 0.
\end{equation*}
This gives the justification for setting $\Delta_{0}^{n}[F]:=F'/n$ and explains
why the above theorem is in fact a $q$-extension of the theorem of Gau\ss-Lucas.

Theorem \ref{sec:class-mathc-mathc} does not carry over completely to the
classes $\overline{\mathcal{D}}_{n}(\lambda)$. Nevertheless, we have the
following $q$-extension of the theorem of Gau\ss-Lucas for the classes
$\overline{\mathcal{D}}_{n}(\lambda)$.

\begin{theorem}[$q$-extension of the Gau\ss-Lucas theorem for
  $\overline{\mathcal{D}}_{n}(\lambda)$]
  \label{sec:class-mathc-mathc-1}
  Let $\lambda\in[0,\frac{2\pi}{n})$ and $F\in
  \overline{\mathcal{D}}_{n}(\lambda)$. Then $\Delta_{\lambda}^{n}[F]
  \in \pi_{n-1}(\overline{\D})$. Furthermore, if $F\in
  \mathcal{D}_{n}(\lambda)$, then $\Delta_{\lambda}^{n}[F] \in \pi_{n-1}(\D)$.
\end{theorem}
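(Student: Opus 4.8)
The plan is to read off the zeros of $\Delta_{\lambda}^{n}[F]$ directly from the defining inequalities (\ref{eq:8}) and (\ref{eq:22}), using the single geometric observation that the value $e^{-in\lambda/2}$ sits in the \emph{open lower} half-plane. Throughout I first take $\lambda\in(0,\frac{2\pi}{n})$ and abbreviate $g(z):=e^{-in\lambda/2}F_{+}(z)/F_{-}(z)$. By (\ref{eq:41}), $\Delta_{\lambda}^{n}[F]$ is a polynomial whose coefficient of $z^{n-1}$ is $C_{n-1}^{(n-1)}(\lambda)\,a_{n}$, which is non-zero (by (\ref{eq:3}) applied with $n-1$ in place of $n$, together with $\deg F=n$); hence $\Delta_{\lambda}^{n}[F]$ has degree exactly $n-1$. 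From the formula in (\ref{eq:41}) its zeros in $\C\setminus\{0\}$ are exactly the points with $F_{+}(z)=F_{-}(z)$, and at any such point where $F_{-}(z)\neq0$ this is equivalent to $g(z)=e^{-in\lambda/2}$. Since $\frac{n\lambda}{2}\in(0,\pi)$ we have $\Im e^{-in\lambda/2}=-\sin\frac{n\lambda}{2}<0$, so $e^{-in\lambda/2}$ can never be attained by $g$ on a region where $\Im g>0$.

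For the first assertion let $F\in\overline{\mathcal{D}}_{n}(\lambda)$. Because $\overline{\mathcal{D}}_{n}(\lambda)\subseteq\pi_{n}(\overline{\D})$, every zero of $F$ lies in $\overline{\D}$, so for $|z|>1$ the point $e^{-i\lambda/2}z$ lies outside $\overline{\D}$ and $F_{-}(z)\neq0$. Thus $g$ is well defined on $\C\setminus\overline{\D}$, where $\Im g(z)>0$ by (\ref{eq:8}); consequently $g(z)\neq e^{-in\lambda/2}$, hence $F_{+}(z)\neq F_{-}(z)$ and $\Delta_{\lambda}^{n}[F](z)\neq0$ there. Therefore none of the $n-1$ zeros of $\Delta_{\lambda}^{n}[F]$ lies outside $\overline{\D}$, i.e. $\Delta_{\lambda}^{n}[F]\in\pi_{n-1}(\overline{\D})$.

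For the second assertion let $F\in\mathcal{D}_{n}(\lambda)$. If $F\in\mathcal{T}_{n}(\lambda)$ then $F\in\mathcal{ST}_{n}$ and the ``furthermore'' part of Theorem \ref{sec:class-mathc-mathc} yields $\Delta_{\lambda}^{n}[F]\in\pi_{n-1}(\D)$ at once. Otherwise $F\in\mathcal{D}_{n}(\lambda)\setminus\mathcal{T}_{n}(\lambda)\subseteq\pi_{n}(\D)$, so now every zero of $F$ lies in the \emph{open} disk and hence $F_{-}(z)\neq0$ already for all $|z|\geq1$; then $g$ is well defined on the closed region $\C\setminus\D$, where $\Im g(z)>0$ by (\ref{eq:22}). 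Running the argument of the previous paragraph with $\C\setminus\D$ in place of $\C\setminus\overline{\D}$ gives $\Delta_{\lambda}^{n}[F](z)\neq0$ for every $|z|\geq1$, so all its zeros lie in $\D$ and $\Delta_{\lambda}^{n}[F]\in\pi_{n-1}(\D)$.

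It remains to treat $\lambda=0$ separately: there $\Delta_{0}^{n}[F]=F'/n$, while $\overline{\mathcal{D}}_{n}(0)=\pi_{n}(\overline{\D})$ and $\mathcal{D}_{n}(0)=\pi_{n}(\D)\cup\mathcal{T}_{n}(0)$ by (\ref{eq:44}); both claims then follow from the Gau\ss-Lucas theorem (invoking Theorem \ref{sec:class-mathc-mathc} for the component $\mathcal{T}_{n}(0)$). I do not anticipate a real obstacle, as the whole argument reduces to the observation that $e^{-in\lambda/2}$ avoids the half-plane prescribed by (\ref{eq:8}) and (\ref{eq:22}); the only point needing care is the well-definedness of $g$, which is why the sharper (open- rather than closed-disk) zero inclusion for $\mathcal{D}_{n}(\lambda)\setminus\mathcal{T}_{n}(\lambda)$ is exactly what is needed to push the conclusion down to $|z|=1$. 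Finally, it is worth stressing that---unlike Theorem \ref{sec:class-mathc-mathc}---only the stated implication survives and not its converse, because $\Delta_{\lambda}^{n}[F]\in\pi_{n-1}(\overline{\D})$ merely forbids $g$ the single value $e^{-in\lambda/2}$ outside $\overline{\D}$ and controls nothing about the sign of $\Im g$; this is the precise sense in which Theorem \ref{sec:class-mathc-mathc} ``does not carry over completely.''
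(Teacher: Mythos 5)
Your proof is correct and is essentially the paper's own argument: the paper derives from (\ref{eq:22}) that $\Re\bigl(e^{-in\lambda/2}\,z\,\Delta_{\lambda}^{n}[F](z)/F_{-}(z)\bigr)>\frac{1}{2}$ on $\C\setminus\D$, which is just a M\"obius reformulation of your observation that $g=e^{-in\lambda/2}F_{+}/F_{-}$ cannot attain the value $e^{-in\lambda/2}$ (equivalently $F_{+}\neq F_{-}$) wherever $\Im g>0$. Your explicit handling of the $\mathcal{T}_{n}(\lambda)$ component of $\mathcal{D}_{n}(\lambda)$ via Theorem \ref{sec:class-mathc-mathc} and of the case $\lambda=0$ via Gau\ss-Lucas is in fact slightly more careful than the paper's proof, which leaves these cases implicit.
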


As shown in \cite[Thm. 18]{lam11}, for a polynomial $F$ of degree $n$ we have
$F= P-Q$ with $P,Q\in\mathcal{ST}_{n}$ and $c_{P}\neq c_{Q}$ if, and only
if, there are $\eta$, $\zeta\in\T$ with $\eta \neq \zeta$ such that
\begin{equation}
  \label{eq:28}
  P=\frac{\eta^{2} F - F^{*n}}{\eta^{2}-\zeta^{2}} \quad \mbox{and} \quad
  Q=\frac{\zeta^{2} F - F^{*n}}{\eta^{2}-\zeta^{2}}.
\end{equation}
In fact, if at least one zero of $F$ lies in $\D$, then the
Hermite-Biehler theorem (cf. Lemma \ref{sec:self-invers-polyn-2}
below) states that $F\in\pi_{n}(\D)$ if, and only if, $P$ and $Q$
belong to $\pi_{n}(\T)$ and have strictly interspersed zeros.
Theorems \ref{sec:an-extens-suffr-2} and \ref{sec:an-extens-suffr-4}
thus imply the following.

\begin{lemma}
  \label{sec:furth-prop-char-2}
  Let $F\in\overline{\mathcal{D}}_{n}(\lambda)\setminus
  \overline{\mathcal{T}}_{n}(\lambda)$. Then for $P$ and $Q$ as defined in
  (\ref{eq:28}) (with $\eta$, $\zeta\in\T$, $\eta\neq \zeta$) we have $P$,
  $Q\in \overline{\mathcal{T}}_{n}(\lambda)$, $P\curlyvee Q$, and $F= P-Q$. If
  $F\in \mathcal{D}_{n}(\lambda) \setminus \mathcal{T}_{n}(\lambda)$, then
  additionally $P$, $Q\in \mathcal{T}_{n}(\lambda)$.
\end{lemma}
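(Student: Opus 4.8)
The plan is to assemble the statement from the three results that immediately precede it: Theorem~\ref{sec:an-extens-suffr-2} to pin down the location of the zeros of $F$, Theorem~\ref{sec:an-extens-suffr-4} to recognise $P$ and $Q$ as elements of $\overline{\mathcal{T}}_{n}(\lambda)$, and the Hermite--Biehler theorem to obtain the strict interspersion $P\curlyvee Q$. The identity $F=P-Q$ follows at once by subtracting the two formulas in (\ref{eq:28}), and throughout we use that $\eta^{2}\neq\zeta^{2}$ (otherwise the right-hand sides of (\ref{eq:28}) are undefined), so the scalar prefactors appearing below are nonzero.

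First I would locate the zeros of $F$. Since $F\in\overline{\mathcal{D}}_{n}(\lambda)\subseteq\pi_{n}(\overline{\D})$ but $F\notin\overline{\mathcal{T}}_{n}(\lambda)$, the contrapositive of Theorem~\ref{sec:an-extens-suffr-2} shows that $F$ has no zero on $\T$; hence $F\in\pi_{n}(\D)$, and in particular $F$ has at least one zero in $\D$, which is precisely the hypothesis needed to invoke the Hermite--Biehler theorem below.

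To place $P$ and $Q$ in $\overline{\mathcal{T}}_{n}(\lambda)$, I would rewrite the numerators in (\ref{eq:28}) as $\eta^{2}F-F^{*n}=\eta^{2}\bigl(F+(-\overline{\eta}^{\,2})F^{*n}\bigr)$ and $\zeta^{2}F-F^{*n}=\zeta^{2}\bigl(F+(-\overline{\zeta}^{\,2})F^{*n}\bigr)$, noting that $-\overline{\eta}^{\,2},-\overline{\zeta}^{\,2}\in\T$ because $\eta,\zeta\in\T$. As $F\in\overline{\mathcal{D}}_{n}(\lambda)\setminus\overline{\mathcal{T}}_{n}(\lambda)$, the characterisation (\ref{eq:51}) of Theorem~\ref{sec:an-extens-suffr-4} gives $F+\zeta'F^{*n}\in\overline{\mathcal{T}}_{n}(\lambda)$ for \emph{every} $\zeta'\in\T$; since $\overline{\mathcal{T}}_{n}(\lambda)$ is invariant under multiplication by nonzero scalars, the polynomials $P$ and $Q$, being such scalar multiples of $F+(-\overline{\eta}^{\,2})F^{*n}$ and $F+(-\overline{\zeta}^{\,2})F^{*n}$, lie in $\overline{\mathcal{T}}_{n}(\lambda)$ and in particular have degree $n$.

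It remains to establish $P\curlyvee Q$ together with the refinement. Because $F\in\pi_{n}(\D)$ has a zero in $\D$, the Hermite--Biehler theorem quoted before the statement applies, and since $F\in\pi_{n}(\D)$ it yields that $P$ and $Q$ belong to $\pi_{n}(\T)$ with strictly interspersed zeros, i.e.\ $P\curlyvee Q$. For the final assertion, if $F\in\mathcal{D}_{n}(\lambda)\setminus\mathcal{T}_{n}(\lambda)$ then $F\in\mathcal{D}_{n}(\lambda)\subseteq\overline{\mathcal{D}}_{n}(\lambda)$ and $F\in\pi_{n}(\D)$, so $F\notin\overline{\mathcal{T}}_{n}(\lambda)$ and everything above applies; moreover (\ref{eq:52}) now gives $F+\zeta'F^{*n}\in\mathcal{T}_{n}(\lambda)$ for all $\zeta'\in\T$, and the same scalar-multiple argument upgrades $P$ and $Q$ to $\mathcal{T}_{n}(\lambda)$. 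The only steps that require genuine care are the appeal to Theorem~\ref{sec:an-extens-suffr-2} guaranteeing that all zeros of $F$ lie in $\D$ (so that Hermite--Biehler is applicable) and the bookkeeping of the unimodular prefactors matching $P$ and $Q$ to the combinations $F+\zeta'F^{*n}$; the remaining manipulations are purely formal. For the boundary case $\lambda=0$ one argues identically, using (\ref{eq:49}) and (\ref{eq:50}) in place of Theorem~\ref{sec:an-extens-suffr-4}.
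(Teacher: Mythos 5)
Your proof is correct and is exactly the argument the paper intends: the paper gives no separate proof of Lemma \ref{sec:furth-prop-char-2}, deriving it in the paragraph that precedes it from Theorem \ref{sec:an-extens-suffr-2} (to conclude $F\in\pi_{n}(\D)$), Theorem \ref{sec:an-extens-suffr-4} (to place $F+\zeta'F^{*n}$, and hence the scalar multiples $P$ and $Q$, in $\overline{\mathcal{T}}_{n}(\lambda)$ resp.\ $\mathcal{T}_{n}(\lambda)$), and the Hermite--Biehler theorem (for $P\curlyvee Q$), which is precisely the route you take. The one inaccurate point is your closing sentence about $\lambda=0$: there Theorem \ref{sec:an-extens-suffr-2} is unavailable, and $\overline{\mathcal{D}}_{n}(0)\setminus\overline{\mathcal{T}}_{n}(0)=\pi_{n}(\overline{\D})\setminus\pi_{n}(\T)$ contains polynomials with a zero $z_{0}\in\T$ together with a zero in $\D$, in which case $F^{*n}(z_{0})=0$ as well, so $P$ and $Q$ share the zero $z_{0}$ and strict interspersion fails; the lemma should therefore be read as restricted to $\lambda\in(0,\frac{2\pi}{n})$, the range in which the two theorems it rests on are stated.
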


The converse of this statement does not hold, i.e. if there are $P$, $Q\in
\overline{\mathcal{T}}_{n}(\lambda)$ with $P\curlyvee Q$, then it is not
necessarily true that $F= P-Q$ belongs to $\overline{\mathcal{D}}_{n}(\lambda)$
(cf. the remarks following Theorem \ref{sec:an-extens-suffr-7}
below). We have, however, the following two characterizations of the classes
$\overline{\mathcal{D}}_{n}(\lambda)$ in terms of the decomposition $F = P-Q$.

\begin{theorem}[Second equivalent characterization of the classes
  $\overline{\mathcal{D}}_{n}(\lambda)$]
  \label{sec:an-extens-suffr-6}
  Let $\lambda\in[0,\frac{2\pi}{n})$. Suppose $P$, $Q\in \pi_{n}(\T)$ are
  such that $c_{P} \neq c_{Q}$ and set $F:=P-Q$. Then
  $F\in\overline{\mathcal{D}}_{n}(\lambda)\setminus
  \overline{\mathcal{T}}_{n}(\lambda)$
  if, and only if,
  \begin{equation*}
    R := \frac{c_{P}\Delta_{\lambda}^{n}[P]}{c_{Q}\Delta_{\lambda}^{n}[Q]}
  \end{equation*}
  maps $\C\setminus\overline{\D}$ into the upper or lower
  half-plane. Moreover,
  $F\in\mathcal{D}_{n}(\lambda)\setminus\mathcal{T}_{n}(\lambda)$ if, and
  only if, $R$ maps $\C\setminus\D$ into the open upper or lower half-plane.
\end{theorem}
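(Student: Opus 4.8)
The plan is to reduce the statement, via the first equivalent characterization (Theorem~\ref{sec:an-extens-suffr-4}) and Suffridge's $q$-analog of the Gau\ss-Lucas theorem (Theorem~\ref{sec:class-mathc-mathc}), to a statement about a one-parameter pencil of polynomials, which is then settled by a short computation. First I would record the consequences of the normalization (\ref{eq:55}): since $P,Q\in\pi_{n}(\T)\subset\mathcal{ST}_{n}$, we have $P^{*n}=c_{P}^{2}P$ and $Q^{*n}=c_{Q}^{2}Q$, whence $F^{*n}=c_{P}^{2}P-c_{Q}^{2}Q$. Writing $p:=\Delta_{\lambda}^{n}[P]$ and $q:=\Delta_{\lambda}^{n}[Q]$ and using the linearity of $\Delta_{\lambda}^{n}$ visible in (\ref{eq:41}), I obtain $\Delta_{\lambda}^{n}[F]=p-q=:A$ and $\Delta_{\lambda}^{n}[F^{*n}]=c_{P}^{2}p-c_{Q}^{2}q=:B$, so that for every $\zeta\in\T$ the polynomial $G_{\zeta}:=F+\zeta F^{*n}$ satisfies $\Delta_{\lambda}^{n}[G_{\zeta}]=A+\zeta B$. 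A direct check gives $G_{\zeta}^{*n}=\overline{\zeta}\,G_{\zeta}$, so $G_{\zeta}\in\mathcal{ST}_{n}$.

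With this in hand the characterization becomes a chain of equivalences. By (\ref{eq:51}) of Theorem~\ref{sec:an-extens-suffr-4}, together with (\ref{eq:49}) (which supplies the membership $F\in\pi_{n}(\overline{\D})$ once all $G_{\zeta}$ lie in $\overline{\mathcal{T}}_{n}(\lambda)\subset\overline{\mathcal{T}}_{n}(0)=\pi_{n}(\T)$), one has $F\in\overline{\mathcal{D}}_{n}(\lambda)\setminus\overline{\mathcal{T}}_{n}(\lambda)$ if, and only if, $G_{\zeta}\in\overline{\mathcal{T}}_{n}(\lambda)$ for all $\zeta\in\T$. Since each $G_{\zeta}\in\mathcal{ST}_{n}$, Theorem~\ref{sec:class-mathc-mathc} turns this into $A+\zeta B\in\pi_{n-1}(\overline{\D})$ for all $\zeta\in\T$. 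Now comes the pencil observation: a point $z$ is a zero of some member of $\{A+\zeta B:\zeta\in\T\}$ precisely when $|A(z)|=|B(z)|$ (with $B(z)=0\neq A(z)$ counting as no solution). Hence the whole pencil lies in $\pi_{n-1}(\overline{\D})$ if, and only if, $|A(z)|\neq|B(z)|$ throughout $\C\setminus\overline{\D}$; as $\C\setminus\overline{\D}$ is connected, this forces one of the two strict inequalities to hold everywhere there.

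The computational crux is to translate this inequality into the half-plane condition on $R$. Using $|c_{P}|=|c_{Q}|=1$ I would expand
\begin{equation*}
  |A|^{2}-|B|^{2}=2\,\Re\big[(c_{P}^{2}\overline{c_{Q}^{2}}-1)\,p\,\overline{q}\big],
\end{equation*}
substitute $p\,\overline{q}=|q|^{2}(p/q)$ and $R=(c_{P}/c_{Q})(p/q)$, and simplify with $\rho:=c_{Q}/c_{P}$, which is unimodular and, because $c_{P},c_{Q}\in\{e^{it}:t\in[0,\pi)\}$ with $c_{P}\neq c_{Q}$, satisfies $\rho\neq\pm1$, i.e. $\Im\rho\neq0$. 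Since $c_{P}/c_{Q}=\overline{\rho}$ gives $p/q=\rho R$, the telescoping $(\overline{\rho}^{2}-1)\rho=\overline{\rho}-\rho=-2i\,\Im\rho$ yields the clean identity
\begin{equation*}
  |A(z)|^{2}-|B(z)|^{2}=4\,(\Im\rho)\,|q(z)|^{2}\,\Im R(z).
\end{equation*}
Thus, where $q\neq0$, the sign of $|A|-|B|$ is that of $(\Im\rho)\,\Im R$, while at the zeros of $q$ one has $|A|=|B|$; consequently $|A|\neq|B|$ on $\C\setminus\overline{\D}$ holds exactly when $q$ has no zeros there (so $R$ has no poles) and $\Im R$ has constant nonzero sign, i.e. when $R$ maps $\C\setminus\overline{\D}$ into one open half-plane. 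This proves the first assertion. The second is obtained by running the identical chain with $\overline{\mathcal{T}}_{n}(\lambda)$, $\pi_{n-1}(\overline{\D})$, (\ref{eq:51}), (\ref{eq:49}), $\C\setminus\overline{\D}$ replaced by $\mathcal{T}_{n}(\lambda)$, $\pi_{n-1}(\D)$, (\ref{eq:52}), (\ref{eq:50}), $\C\setminus\D$, using the strict parts of Theorems~\ref{sec:an-extens-suffr-4} and~\ref{sec:class-mathc-mathc}; the case $\lambda=0$ follows from the same argument under the conventions $\Delta_{0}^{n}[F]=F'/n$ and (\ref{eq:44}).

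The step I expect to require the most care is the reduction in the second paragraph, specifically the bookkeeping that keeps every $G_{\zeta}$ of exact degree $n$, so that Theorem~\ref{sec:class-mathc-mathc} applies and ``$G_{\zeta}\in\overline{\mathcal{T}}_{n}(\lambda)$'' is meaningful. The leading coefficient of $G_{\zeta}$ is $f_{n}+\zeta\overline{f_{0}}$, which can vanish for some $\zeta\in\T$ only when $|f_{0}|=|f_{n}|$; I would exclude this by noting that a polynomial in $\overline{\mathcal{D}}_{n}(\lambda)\setminus\overline{\mathcal{T}}_{n}(\lambda)$ has a zero in $\D$ and hence satisfies $|f_{0}|<|f_{n}|$, and by checking in the reverse direction that the half-plane condition on $R$ is itself incompatible with $|f_{0}|=|f_{n}|$ (equivalently, it forces $q\in\pi_{n-1}(\overline{\D})$ together with the correct normalization of $R$ at infinity). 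The remaining degenerate cases, where $R$ is constant or where the numerator and denominator of $R$ share zeros, are all absorbed by the displayed identity and by the open mapping theorem (the latter explaining why over the \emph{open} domain $\C\setminus\overline{\D}$ the image automatically avoids the real axis, whereas over $\C\setminus\D$ the openness must be stated explicitly).
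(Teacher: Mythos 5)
Your proof is correct and rests on the same two pillars as the paper's: the reduction via Theorem~\ref{sec:an-extens-suffr-4} to the condition that the pencil $F+\zeta F^{*n}$, $\zeta\in\T$, lies in $\overline{\mathcal{T}}_{n}(\lambda)$, followed by Theorem~\ref{sec:class-mathc-mathc} to push everything down to $\Delta_{\lambda}^{n}$. Where you diverge is in the endgame. The paper first \emph{realifies} the pencil: writing $c_{P}=e^{it_{P}}$, $c_{Q}=e^{it_{Q}}$ it computes $e^{-it}F+e^{it}F^{*n}=2\left[\cos(t+t_{P})c_{P}P-\cos(t+t_{Q})c_{Q}Q\right]$ and observes that $\cos(t+t_{Q})/\cos(t+t_{P})$ sweeps all of $\R$, so the unimodular pencil is (up to scalars) the real pencil $\{c_{P}P-rc_{Q}Q:r\in\R\}$ together with $Q$ itself; applying Theorem~\ref{sec:class-mathc-mathc} to each member then gives immediately that $\Delta_{\lambda}^{n}[Q]\in\pi_{n-1}(\D)$ and that $R$ omits every real value on $\C\setminus\D$. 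You instead apply $\Delta_{\lambda}^{n}$ first, obtaining the unimodular pencil $A+\zeta B$, and finish with the identity $|A|^{2}-|B|^{2}=4(\Im\rho)\,|q|^{2}\,\Im R$. Both routes work, and your identity is a nice self-contained computation, but the paper's real-pencil reparametrization buys you something your version has to recover by hand: since $Q$ (resp.\ a nonzero multiple of $Q$) is itself a member of the pencil, the condition $q=\Delta_{\lambda}^{n}[Q]\in\pi_{n-1}(\overline{\D})$ appears \emph{explicitly}, which is exactly what rules out poles of $R$ (and the degenerate case of common zeros of $p$ and $q$) in $\C\setminus\overline{\D}$. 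In your formulation this information is hidden in the locus $|A|=|B|\supseteq\{q=0\}$, and your closing remark that the shared-zero degeneracy is ``absorbed by the displayed identity'' is the one place I would ask you to spell out the argument: at a common zero of $p$ and $q$ outside $\overline{\D}$ the identity gives $|A|=|B|$ so the pencil condition fails, yet the \emph{reduced} $R$ could a priori still omit $\R$ there, so you should note that the half-plane condition must be read for the unreduced quotient (equivalently, that the pencil contains multiples of $P$ and $Q$, forcing $p,q\in\pi_{n-1}(\overline{\D})$ in the forward direction and $q\neq 0$ via the pole argument in the reverse one). Your attention to the exact-degree bookkeeping for $G_{\zeta}$ and to the hypothesis $F\in\pi_{n}(\overline{\D})$ needed to invoke Theorem~\ref{sec:an-extens-suffr-4} in the reverse direction is welcome; the paper glosses over both points.
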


\begin{theorem}[Third equivalent characterization of the classes
  $\overline{\mathcal{D}}_{n}(\lambda)$]
  \label{sec:an-extens-suffr-8}
  Let $\lambda\in(0,\frac{2\pi}{n})$. Suppose $P$, $Q\in \pi_{n}(\T)$
  are such that $c_{P} \neq c_{Q}$ and set $F:=P-Q$ and
  \begin{equation}
    \label{eq:46}
    S:=P_{+}\cdot Q_{-}-P_{-}\cdot Q_{+}, 
    \quad \mbox{and}\quad
    T:=F_{+}\cdot (F^{*n})_{-}-F_{-}\cdot (F^{*n})_{+}.
  \end{equation}
  Then the following holds:
  \begin{enumerate}
  \item\label{item:1} If $F\in\pi_{n}(\D)$ and all zeros of $S$ or $T$
    that lie on $\T$ are of even order, then $F\in
    \overline{\mathcal{D}}_{n}(\lambda)\setminus
    \overline{\mathcal{T}}_{n}(\lambda)$.
  \item\label{item:5} If $F\in \overline{\mathcal{D}}_{n}(\lambda)\setminus
    \overline{\mathcal{T}}_{n}(\lambda)$, then all zeros of $S$ and
    $T$ on $\T$ are of even order.
  \item\label{item:6} If $F\in\pi_{n}(\D)$ and if $S$ or $T$ has exactly
    $n-1$ critical points in $\overline{\D}$, then
    $F\in\mathcal{D}_{n}(\lambda)\setminus \mathcal{T}_{n}(\lambda)$.
  \item\label{item:7} If $F\in\mathcal{D}_{n}(\lambda)\setminus
    \mathcal{T}_{n}(\lambda)$, then $S$ and $T$ have exactly $n-1$
    critical points in $\D$.
  \end{enumerate}
\end{theorem}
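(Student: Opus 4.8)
The plan is to collapse the whole statement to a single polynomial and a single boundary identity. First I would expand, using $F^{*n}=P^{*n}-Q^{*n}=c_{P}^{2}P-c_{Q}^{2}Q$ (which follows from $(c_{P}P)^{*n}=c_{P}P$, i.e. $P^{*n}=c_{P}^{2}P$, and likewise for $Q$), to obtain
\[
T=(c_{P}^{2}-c_{Q}^{2})\,S .
\]
Since $c_{P},c_{Q}\in\{e^{it}:t\in[0,\pi)\}$ with $c_{P}\neq c_{Q}$ we have $c_{P}^{2}\neq c_{Q}^{2}$, so $S$ and $T$ differ by a nonzero constant; they have the same zeros and the same critical points, and every occurrence of ``$S$ or $T$'' and ``$S$ and $T$'' becomes a statement about $T$ alone. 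Next I would record the conjugation identities $(F^{*n})_{\pm}=e^{\pm in\lambda/2}(F_{\pm})^{*n}$, which come directly from $1/\overline{e^{\pm i\lambda/2}z}=e^{\pm i\lambda/2}/\overline{z}$, and then use $(F_{\pm})^{*n}(z)=z^{n}\overline{F_{\pm}(z)}$ for $z\in\T$ to get the key boundary identity
\[
T(z)=2i\,z^{n}\,\Im\bigl(e^{-in\lambda/2}F_{+}(z)\overline{F_{-}(z)}\bigr)
=2i\,z^{n}\,|F_{-}(z)|^{2}\,\Im\Bigl(e^{-in\lambda/2}\tfrac{F_{+}(z)}{F_{-}(z)}\Bigr),
\qquad z\in\T .
\]
Writing $g:=e^{-in\lambda/2}F_{+}/F_{-}$ (the function appearing in (\ref{eq:8}), (\ref{eq:22}) and Lemma \ref{sec:main-result:-an-1}), this says that on $\T$ the quantity $z^{-n}T(z)/(2i)$ is real with the same sign as $\Im g$; in particular, since every $F$ under consideration lies in $\pi_{n}(\D)$ (for $\overline{\mathcal{D}}_{n}(\lambda)\setminus\overline{\mathcal{T}}_{n}(\lambda)$ this is Theorem \ref{sec:an-extens-suffr-2}, so $F_{-}\neq0$ on $\T$), the zeros of $T$ on $\T$ are exactly the points where $\Im g=0$, with matching orders.

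For (\ref{item:1}) and (\ref{item:5}) I would use that $\Im g$ is harmonic on the exterior domain $E:=\hat{\C}\setminus\oD$, that $g$ is holomorphic and zero-free there because $F\in\pi_{n}(\D)$, and that $\infty$ is an \emph{interior} point of $E$ with $\Im g(\infty)=\sin(n\lambda/2)>0$. By the maximum/minimum principle the extrema of $\Im g$ over $\overline{E}$ are attained on $\T$, and the positive interior value forbids $\Im g\le0$ on all of $\T$. Hence if the boundary values do not change sign — equivalently, by the boundary identity, if every zero of $T$ on $\T$ has even order, an odd-order zero being exactly a sign change of the real-analytic function $z\mapsto z^{-n}T(z)/(2i)$ — that fixed sign is forced to be ``$\ge0$'', and the minimum principle then gives $\Im g>0$ throughout the open exterior, i.e. $F\in\overline{\mathcal{D}}_{n}(\lambda)$; as $F\in\pi_{n}(\D)$ it is not in $\overline{\mathcal{T}}_{n}(\lambda)$. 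This is (\ref{item:1}). Conversely $F\in\overline{\mathcal{D}}_{n}(\lambda)\setminus\overline{\mathcal{T}}_{n}(\lambda)$ forces $\Im g\ge0$ on $\T$ by continuity, so $z^{-n}T/(2i)\ge0$ there and no odd-order zero can occur, giving (\ref{item:5}).

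For the critical-point statements I would first establish the symmetry of $T$: the boundary identity yields $\overline{T(z)}=-z^{-2n}T(z)$ on $\T$, so $T$ is anti-self-inversive of formal degree $2n$ (its actual degree is $2n-1$, with $T(0)=0$). Consequently the zeros of $T$ off $\T$ pair up under $z\mapsto1/\overline{z}$, the number of zeros of $T$ on $\T$ is even, and writing $T=zT_{1}$ with $T_{1}$ anti-self-inversive of degree $2n-2$ one computes $\Re\bigl(zT'(z)/T(z)\bigr)=1+\Re\bigl(zT_{1}'(z)/T_{1}(z)\bigr)=n$ at every $z\in\T$ with $T(z)\neq0$. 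If $F\in\mathcal{D}_{n}(\lambda)\setminus\mathcal{T}_{n}(\lambda)$, then (\ref{eq:22}) gives $\Im g>0$ on the \emph{closed} exterior, so $T$ has no zeros on $\T$; the $2n-2$ off-circle zeros then split evenly as $n-1$ in $\D$ and $n-1$ outside $\oD$, so with the forced zero at $0$ there are exactly $n$ zeros of $T$ in $\D$. Since $\Re(zT'/T)=n>0$ on all of $\T$, the curve $z\mapsto zT'(z)/T(z)$ lies on the vertical line $\Re=n$ and thus has winding number $0$ about the origin; the argument principle applied to $zT'/T$ gives $1+(\#\{\text{critical points of }T\text{ in }\D\})=n$, so $T$ has exactly $n-1$ critical points in $\D$ and none on $\T$ (as $\Re(zT'/T)\neq0$ there). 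This is (\ref{item:7}). For (\ref{item:6}) I would run this backwards: assuming $F\in\pi_{n}(\D)$ and exactly $n-1$ critical points of $T$ in $\oD$, I would show $T$ cannot vanish on $\T$, whence $\Im g>0$ on $\T$ and, with the minimum principle above, $\Im g>0$ on the whole closed exterior, i.e. $F\in\mathcal{D}_{n}(\lambda)\setminus\mathcal{T}_{n}(\lambda)$.

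The hard part will be exactly this last implication: controlling the winding-number count when $T$ is allowed to vanish on $\T$. There the identity $\Re(zT'/T)=n$ breaks down at each boundary zero, a double (even-order) zero of $T$ on $\T$ produces a critical point sitting on $\T$, and $zT'/T$ escapes to infinity along the line $\Re=n$. I would handle this by indenting $\T$ with small arcs into $\D$ around each boundary zero and tracking the half-residue contributions of those arcs, showing that any zero of $T$ on $\T$ shifts the number of critical points counted in $\oD$ away from $n-1$; hence ``exactly $n-1$ critical points in $\oD$'' can hold only in the zero-free case. This boundary bookkeeping — reconciling the open-disk count in (\ref{item:7}) with the closed-disk hypothesis in (\ref{item:6}) — is the main technical obstacle; by contrast the reduction $T=(c_{P}^{2}-c_{Q}^{2})S$ and the boundary identity are routine, and (\ref{item:1}), (\ref{item:5}) follow cleanly from the maximum principle once the sign of $\Im g(\infty)$ is used.
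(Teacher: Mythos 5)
Your reduction $T=(c_{P}^{2}-c_{Q}^{2})S$, the boundary identity expressing $z^{-n}T(z)/(2i)$ on $\T$ as $|F_{-}(z)|^{2}\,\Im\bigl(e^{-in\lambda/2}F_{+}(z)/F_{-}(z)\bigr)$, and the maximum-principle argument for (\ref{item:1}) and (\ref{item:5}) using $\Im g(\infty)=\sin(n\lambda/2)>0$ are exactly the paper's proof (the paper phrases the identity via the real functions $N(t)=-ie^{-int}T(e^{it})$ and $D(t)=|F_{-}(e^{it})|^{2}$, but the content is identical). Where you diverge is in (\ref{item:6}) and (\ref{item:7}): the paper reduces both to the single equivalence ``$N(t)>0$ on $\R$ iff $T$ has no zeros on $\T$'' and then \emph{cites} the known fact (\cite[Thm.~7.1.3]{sheil}, stated explicitly just after the theorem in the text) that a $2n$-self-inversive polynomial has exactly $n-1$ critical points in $\oD$ if, and only if, it does not vanish on $\T$, together with (\ref{eq:47}) to rule out critical points on $\T$ in the zero-free case. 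You instead set out to reprove that fact via the argument principle. Your direction for (\ref{item:7}) (no boundary zeros $\Rightarrow$ $n-1$ critical points in $\D$, via $\Re(zT'/T)=n$ on $\T$ and winding number $0$) is sound, modulo the small point that the zero of $T$ at the origin may have order $k>1$ (the count of zeros of $T$ in $\D$ still comes out to $n$, so the conclusion survives).

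The genuine gap is the implication you yourself flag as the ``main technical obstacle'': that exactly $n-1$ critical points in $\oD$ forces $T\neq 0$ on $\T$, which is what (\ref{item:6}) needs. Your proposed indentation/half-residue bookkeeping is not carried out, and as described it does not obviously close: the naive tally ``each distinct boundary zero contributes $+\tfrac12$ to the winding number'' produces non-integer winding numbers in concrete examples (e.g.\ $n=2$, $T(z)=cz(z-w)^{2}$ with $w\in\T$, which has two critical points in $\oD$), because the contribution of the non-indented arcs, where $zT'/T$ moves along the line $\Re=n$ and escapes to infinity near each boundary zero, is not negligible and must be tracked through crossings of the real axis. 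One also has to separately account for the $\nu-1$ critical points of $T$ sitting \emph{on} $\T$ at each boundary zero of order $\nu$, and show that the combined count always strictly exceeds $n-1$ whether the boundary zeros are simple or multiple. None of this is impossible — it is essentially the content of \cite[Thm.~7.1.3]{sheil} — but your proposal does not prove it, so (\ref{item:6}) is not established. Either carry out that count carefully or, as the paper does, invoke the cited theorem and dispose of boundary critical points with (\ref{eq:47}).
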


By \cite[Thm. 7.1.3]{sheil} the polynomial $T$ in
(\ref{eq:46}) has exactly $n-1$ critical points in $\overline{\D}$
if, and only if, it does not vanish on $\T$. Hence, another way to
state Theorem \ref{sec:an-extens-suffr-8} would be that $F$ belongs to
$\mathcal{D}_{n}(\lambda)$ if, and only if, the finite Blaschke
product $B:= F/F^{*n}$ satisfies
\begin{equation*}
  B(e^{i\lambda/2} z) \neq B(e^{-i\lambda/2} z)
  \quad \mbox{for all} \quad z\in\T.
\end{equation*}
The following is thus merely a reformulation of Problem
\ref{sec:main-result:-an}.

\begin{problem}
  Let $\lambda\in[0,\frac{2\pi}{n})$. Is it possible to obtain a description
  of the zero configurations of those Blaschke products of degree $n$
  that map every arc on $\T$ of length $\lambda$ onto an arc of length less
  than $2\pi$?
\end{problem}

Observe that Theorem \ref{sec:an-extens-suffr-8} provides a feasible way to
check whether a given polynomial $F$ belongs to
$\overline{\mathcal{D}}_{n}(\lambda)$ or not. In general, however, we cannot
give many concrete examples of polynomials in
$\overline{\mathcal{D}}_{n}(\lambda)$. At the moment the only polynomials in
$\overline{\mathcal{D}}_{n}(\lambda)$ which we know concretely are (1) all
polynomials in $\overline{\mathcal{T}}_{n}(\lambda)$, (2) all polynomials
whose zeros lie in $|z|<r_{n,\lambda}$, where $r_{n,\lambda}$ is a number in
$(0,1)$ whose existence follows from the fact that
$z^{n}\in\mathcal{D}_{n}(\lambda)$ for all $\lambda\in[0,\frac{2\pi}{n})$,
and (3) all polynomials of the form $F(rz)$ where $F$ is any given polynomial
in $\overline{\mathcal{D}}_{n}(\lambda)$ and $r>1$ (this follows directly
from the definition of the classes $\overline{\mathcal{D}}_{n}(\lambda)$).
A perhaps more interesting subset of $\overline{\mathcal{D}}_{n}(\lambda)$)
is presented in the next theorem.





\begin{theorem}
  \label{sec:an-extens-suffr-7}
  Let $\lambda\in(0,\frac{2\pi}{n})$ and suppose
  $P\in\pi_{n}(\T)$. Then $F(z):=P(z) - Q_{n}(\lambda;z)$ belongs to
  $\overline{\mathcal{D}}_{n}(\lambda)\setminus
  \overline{\mathcal{T}}_{n}(\lambda)$ if, and only if, there are
  $c\in\T\setminus\{\pm 1\}$ and $a,b\in\R$, $a\neq 0$, such that
  \begin{equation}
    \label{eq:56}
    P(z) = c\left(b + a \sum_{k=1}^{n} 
      \frac{e^{i(k-n-1)\frac{\lambda}{2}}}{\sin \frac{(k-n-1)\lambda}{2}}
      \frac{1+e^{i(n+1)\frac{\lambda}{2}}z}{1+e^{i(2k-n-1)\frac{\lambda}{2}}z} \right)\, 
    Q_{n}(\lambda;z).
  \end{equation}
\end{theorem}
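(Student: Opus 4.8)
The plan is to translate the membership of $F=P-Q_{n}(\lambda;\cdot)$ into a half--plane mapping property of one rational function, and then to use the self--inversive symmetry of $P$ to collapse that function to a two--parameter family. Write $Q:=Q_{n}(\lambda;\cdot)$. A short computation from the definition (\ref{eq:9}) gives $Q_{\pm}=(1+e^{\pm in\lambda/2}z)\,Q_{n-1}(\lambda;\cdot)$, so by (\ref{eq:41}) $\Delta_{\lambda}^{n}[Q]=Q_{n-1}(\lambda;\cdot)$, and $Q$ is $n$--self--inversive, i.e. $c_{Q}=1$. Hence Theorem \ref{sec:an-extens-suffr-6}, applied with this $Q$, reduces the claim to: for $P\in\pi_{n}(\T)$ with $c_{P}\neq1$, one has $F\in\overline{\mathcal{D}}_{n}(\lambda)\setminus\overline{\mathcal{T}}_{n}(\lambda)$ iff $R:=c_{P}\Delta_{\lambda}^{n}[P]/Q_{n-1}(\lambda;\cdot)$ maps $\C\setminus\overline{\D}$ into a half--plane. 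The side condition $c_{P}\neq1$ is automatic: by Theorem \ref{sec:an-extens-suffr-2} an $F$ in $\overline{\mathcal{D}}_{n}\setminus\overline{\mathcal{T}}_{n}$ lies in $\pi_{n}(\D)$ and so is not $n$--self--inversive, whereas $c_{P}=1$ would give $F^{*n}=P^{*n}-Q=P-Q=F$.

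Next I would bring in the symmetry. Since $\pi_{n}(\T)\subset\mathcal{ST}_{n}$, set $c:=\overline{c_{P}}$ and $G:=P/(cQ)$; from $P^{*n}=c_{P}^{2}P$ and $Q^{*n}=Q$ one checks $\overline{G(1/\overline{z})}=G$, so $G$ is real on $\T$. Expanding $G=s_{0}+\sum_{k=1}^{n}r_{k}/(1+\omega_{k}z)$ with $\omega_{k}:=e^{i(2k-n-1)\lambda/2}$, reality on $\T$ forces each $r_{k}$ to be purely imaginary, say $r_{k}=i\rho_{k}$, $\rho_{k}\in\R$. Because $c_{P}c=1$, the factorization $Q_{\pm}=(1+e^{\pm in\lambda/2}z)Q_{n-1}$ lets $R$ be written entirely through $G$,
\[
 R=\frac{(1+e^{in\lambda/2}z)\,G_{+}-(1+e^{-in\lambda/2}z)\,G_{-}}{2iz\sin(n\lambda/2)},
\]
which is the form I would use for all boundary computations.

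The heart of the proof is the boundary behaviour of $R$. As $G_{\pm}$ are real on $\T$, a direct calculation gives, for $z=e^{i\theta}$,
\[
 \Im R(e^{i\theta})=\frac{\cos\theta+\cos(n\lambda/2)}{-2\sin(n\lambda/2)}\bigl(G_{+}-G_{-}\bigr),\qquad
 G_{+}-G_{-}=\sin(\tfrac{\lambda}{2})\sum_{k=1}^{n}\frac{\rho_{k}}{\cos(\lambda/2)+\cos\psi_{k}},
\]
with $\psi_{k}=\theta+(2k-n-1)\lambda/2$. The factor $\cos\theta+\cos(n\lambda/2)$ vanishes precisely at the two $\theta$ where the $k=1$ and $k=n$ summands blow up, so these extreme poles cancel; at each of the remaining $n-1$ boundary points---the zeros of $Q_{n-1}$, hence the poles of $R$---two consecutive summands collide and leave a simple pole of $\Im R|_{\T}$ with residue proportional to $\rho_{k+1}-\rho_{k}$. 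On the other hand, if $R$ maps $\C\setminus\overline{\D}$ into, say, the closed upper half--plane, then at each pole $p\in\T$ its Laurent residue must equal $imp$ with $m>0$, and such a term has \emph{finite} imaginary part along $\T$ (its boundary values tend to $-m/2$). Thus $\Im R|_{\T}$ is bounded at every pole of $R$, which forces $\rho_{k+1}=\rho_{k}$ for all $k$; all residues are equal, say $\rho_{k}\equiv2a$. A telescoping identity built on $2\sin(\lambda/2)/(\cos(\lambda/2)+\cos\psi)=\tan(\tfrac{\psi}{2}+\tfrac{\lambda}{4})-\tan(\tfrac{\psi}{2}-\tfrac{\lambda}{4})$ then collapses the sum to $G_{+}-G_{-}=2a\sin(n\lambda/2)/(\cos(n\lambda/2)+\cos\theta)$ and yields $\Im R|_{\T}\equiv-a$.

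It remains to identify the family and to run the argument backwards. With $\rho_{k}\equiv2a$ one has $G=s_{0}+2ia\sum_{k}(1+\omega_{k}z)^{-1}$; using the identity $d_{k}\bigl(1-\omega_{n+1}/\omega_{k}\bigr)=2i$ (with $\omega_{n+1}:=e^{i(n+1)\lambda/2}$ and $d_{k}$ the coefficients in (\ref{eq:56})) this rearranges to $G=b+a\sum_{k}d_{k}(1+\omega_{n+1}z)/(1+\omega_{k}z)$, where $b$ turns out real because $\Im s_{0}=-na$ exactly matches the imaginary part of the shifted constant; multiplying by $cQ$ reproduces (\ref{eq:56}). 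Here $a\neq0$, since $a=0$ makes $G$ constant and $F$ a multiple of $Q$, i.e. $F\in\overline{\mathcal{T}}_{n}$; and $c=\overline{c_{P}}\in\T\setminus\{\pm1\}$ encodes $c_{P}\neq1$. Conversely, a $P$ of the form (\ref{eq:56}) has all residues equal, so $\Im R|_{\T}\equiv-a\neq0$ and $\Im R(\infty)=-an$ share one nonzero sign; $\Im R$ is therefore of one strict sign on $\C\setminus\overline{\D}$, and Theorem \ref{sec:an-extens-suffr-6} returns $F\in\overline{\mathcal{D}}_{n}(\lambda)\setminus\overline{\mathcal{T}}_{n}(\lambda)$. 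I expect the main obstacle to be the third paragraph: carrying out the boundary computation, matching the zeros of $\cos\theta+\cos(n\lambda/2)$ with the extreme poles of the sum, and---above all---turning the half--plane hypothesis into the boundedness of $\Im R$ at the interior poles, which is the mechanism that forces the $n$ residues down to a single parameter. The telescoping identity and the verification of the residue orientation $m>0$ in the converse are the remaining technical points.
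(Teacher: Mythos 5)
Your proposal is correct in outline, and its forward direction takes a genuinely different route from the paper's. The paper proves the necessity of (\ref{eq:56}) by an interpolation argument: from Lemma \ref{sec:furth-prop-char-2} and the boundary statement in Theorem \ref{sec:an-extens-suffr-4} it extracts a two-term recurrence for the values of $P$ at the $n+1$ points $-e^{i(2k-n-1)\lambda/2}$ and $-e^{-i(n+1)\lambda/2}$, reducing $P$ to two real parameters $\hat a,\hat b$, and then verifies by direct computation that the explicit polynomial in (\ref{eq:56}) takes the same values at those $n+1$ points, so the two degree-$n$ polynomials coincide. You instead pass through Theorem \ref{sec:an-extens-suffr-6}, expand $G=c_{P}P/Q_{n}(\lambda;\cdot)$ in partial fractions with purely imaginary residues $i\rho_{k}$, and show that the constant-sign condition on $\Im R|_{\T}$ kills the simple poles of $\Im R|_{\T}$ at the zeros of $Q_{n-1}(\lambda;\cdot)$, whose residues are proportional to $\rho_{k+1}-\rho_{k}$; your boundary formulas (the factor $\cos\theta+\cos(n\lambda/2)$, the cancellation at the two extreme poles, and the telescoping identity) all check out. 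Your route has the advantage of explaining structurally why the family is exactly three-dimensional ($n$ residues collapse to one, plus $\Re s_{0}$ and $c$), whereas the paper's is more mechanical but leans only on already-proved statements; also note that for forcing $\rho_{k+1}=\rho_{k}$ you do not need the full residue classification $imp$, $m>0$, at boundary poles --- the weaker observation that a simple pole of the real function $\Im R|_{\T}$ changes sign, contradicting $\Im R$ having one weak sign on $\T$, already suffices.

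One caveat, which you yourself flag: in the converse direction the inference ``$\Im R|_{\T}\equiv-a$ and $\Im R(\infty)=-na$ have the same sign, therefore $\Im R$ has one strict sign on $\C\setminus\overline{\D}$'' is not a legitimate maximum-principle step on its own, because $R$ has poles on $\T$ at the zeros of $Q_{n-1}(\lambda;\cdot)$ and a harmonic function with constant boundary values off a finite set can still change sign near such a pole if the residue is oriented the wrong way. This must be closed by checking the residues: each summand equals $\frac{2i}{1+e^{i(2k-n)\lambda/2}z}$ up to the real constant $e^{-i(k-n)\lambda/2}/\sin\frac{(k-n)\lambda}{2}$, and $\frac{2i}{1+z}$ maps $\C\setminus\D$ onto $\{w:\Im w\leq 1\}$, so that $\Im R\leq a(-n+(n-1))=-a$ throughout $\C\setminus\D$ when $a>0$ (and $\geq -a$ when $a<0$). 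This is precisely the computation the paper performs for its converse, so once you carry it out your two arguments coincide there.
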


By definition of the classes $\mathcal{P}\overline{\mathcal{D}}_{n}(\lambda)$
and $\mathcal{PD}_{n}(\lambda)$ Theorem \ref{sec:an-extens-suffr-3}\ref{item:3}
can also be stated in the following form: if $\lambda<\mu<\frac{2\pi}{n}$ and if
$F\in \overline{\mathcal{D}}_{n}(\lambda)$ is such that $F + \zeta F^{*n}$ is
not $\lambda$-extremal for any $\zeta\in \T$, then $F*_{\lambda} Q_{n}(\mu;z)
\in \mathcal{D}_{n}(\mu)$. Theorem \ref{sec:an-extens-suffr-7} shows that there
are in fact $F\in \overline{\mathcal{D}}_{n}(\lambda)\setminus
\overline{\mathcal{T}}_{n}(\lambda)$ with the property that $F+\zeta F^{*n}$ is
$\lambda$-extremal for a $\zeta\in\T$. For, if $F = P-Q_{n}(\lambda;z)$ with $P$
as in Theorem \ref{sec:an-extens-suffr-7}, then $F\in
\overline{\mathcal{D}}_{n}(\lambda)\setminus
\overline{\mathcal{T}}_{n}(\lambda)$, and $F-c^{2} F^{*n} = (c^{2}-1)
Q_{n}(\lambda;z)$.

Since by Theorem \ref{sec:an-extens-suffr-7} the set of polynomials
$P$ for which $F=P-Q_{n}(\lambda;z)$ belongs to
$\overline{\mathcal{D}}_{n}(\lambda)\setminus
\overline{\mathcal{T}}_{n}(\lambda)$ is a three-parameter family, it
is clear that for large enough $n$ there will be a polynomial
$P\in\mathcal{T}_{n}(\lambda)$ with $P\curlyvee Q_{n}(\lambda)$ such
that $F=P-Q$ does not belong to
$\overline{\mathcal{D}}_{n}(\lambda)$. This proves that the converse
of Lemma \ref{sec:furth-prop-char-2} does not hold.

Next, let $\mathcal{R}_{1}$ denote the set of functions $f$ analytic in
$\D$ for which $f(0)=1$ and $\Re f(z)>\frac{1}{2}$ for $z\in\D$. In
\cite{suffridge} Suffridge showed that $f\in\mathcal{R}_{1}$ if, and only if,
there are sequences $(n_{k})_{k}\subset \N$, $(\lambda_{k})_{k}$, and
$(p_{k})_{k}$, with $n_{k}\geq k$, $\lambda_{k}\in(0,\frac{2\pi}{n_{k}})$,
$p_{k}\in\mathcal{PT}_{n_{k}}(\lambda_{k})$, and $p_{k}(0)=1$, for all
$k\in\N$, such that $p_{k}\rightarrow f$ uniformly on compact subsets of $\D$
as $k\rightarrow \infty$. One might hope that the limits of polynomials in
the larger class $\mathcal{PD}_{n}(\lambda)$ (in fact, limits of the
$n$-inverses of polynomials in $\mathcal{PD}_{n}(\lambda)$, since we want
convergence in $\D$) constitute a larger class of functions than
$\mathcal{R}_{1}$, but this is not the case.

\begin{theorem}
  \label{sec:an-extens-suffr-5}
  Let $f$ be analytic in $\D$ with $f(0)=1$.  If there is a strictly
  increasing sequence $(n_{k})_{k}\subset \N$, and sequences
  $(\lambda_{k})_{k}$ and $(p_{k})_{k}$ with
  $\lambda_{k}\in(0,\frac{2\pi}{n_{k}})$,
  $p_{k}\in\mathcal{PD}_{n_{k}}(\lambda_{k})$, and $p_{k}^{n_{k}*}(0)=1$, for
  all $k\in\N$, such that $p_{k}^{*n_{k}}\rightarrow f$ uniformly on compact
  subsets of $\D$ as $k\rightarrow \infty$, then $f\in\mathcal{R}_{1}$.
\end{theorem}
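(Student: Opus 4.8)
The plan is to convert the membership $p_{k}\in\mathcal{PD}_{n_{k}}(\lambda_{k})$ into a half-plane estimate on $\C\setminus\oD$ via Theorems \ref{sec:an-extens-suffr-3}\ref{item:4} and \ref{sec:an-extens-suffr-1}\ref{item:21}, to transport that estimate to $\D$ by means of the $n$-inverse, and then to pass to the limit $k\to\infty$. First I fix $k$ and write $p_{k}(z)=\sum_{j=0}^{n_{k}}a_{j}z^{j}$. Since $p_{k}^{*n_{k}}(0)=\overline{a_{n_{k}}}$, the normalization forces $a_{n_{k}}=1$, so $p_{k}$ has degree $n_{k}$; put $c_{k}:=a_{0}=p_{k}(0)$. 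Because $C_{0}^{(n_{k})}(\lambda_{k})=C_{n_{k}}^{(n_{k})}(\lambda_{k})=1$, the polynomial $F:=p_{k}*Q_{n_{k}}(\lambda_{k})\in\mathcal{D}_{n_{k}}(\lambda_{k})$ has constant term $c_{k}$ and leading coefficient $1$. If $p_{k}\in\mathcal{PD}_{n_{k}}(\lambda_{k})\setminus\mathcal{PT}_{n_{k}}(\lambda_{k})$, then $F\in\mathcal{D}_{n_{k}}(\lambda_{k})\setminus\mathcal{T}_{n_{k}}(\lambda_{k})\subseteq\pi_{n_{k}}(\D)$; as $F$ is monic with all zeros $\zeta_{j}\in\D$, its constant term satisfies $|c_{k}|=\prod_{j}|\zeta_{j}|<1$, and Theorem \ref{sec:an-extens-suffr-3}\ref{item:4} (applicable since $|a_{0}|<|a_{n_{k}}|$) gives
\[
  \Re\frac{p_{k}(z)-c_{k}}{z^{n_{k}}-c_{k}}>\frac12,\qquad z\in\C\setminus\D.
\]

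In the remaining case $p_{k}\in\mathcal{PT}_{n_{k}}(\lambda_{k})$, Theorem \ref{sec:an-extens-suffr-3}\ref{item:4} no longer applies because now $|c_{k}|=|a_{n_{k}}|$, so I instead use Theorem \ref{sec:an-extens-suffr-1}\ref{item:21}. To invoke it I verify that $p_{k}\in\mathcal{ST}_{n_{k}}$: the coefficients $C_{k}^{(n_{k})}(\lambda_{k})$ are real and symmetric by (\ref{eq:3}), so $Q_{n_{k}}(\lambda_{k};\cdot)$ is $n_{k}$-self-inversive, and since the $n$-inverse distributes over the Hadamard product, the relation $F^{*n_{k}}=c_{F}^{2}F$ for $F\in\mathcal{T}_{n_{k}}(\lambda_{k})\subset\mathcal{ST}_{n_{k}}$ (see (\ref{eq:55})) reads
\[
  p_{k}^{*n_{k}}*Q_{n_{k}}(\lambda_{k})=\bigl(p_{k}*Q_{n_{k}}(\lambda_{k})\bigr)^{*n_{k}}=c_{F}^{2}\,p_{k}*Q_{n_{k}}(\lambda_{k}).
\]
Cancelling the invertible factor $*Q_{n_{k}}(\lambda_{k})$ (all $C_{j}^{(n_{k})}(\lambda_{k})\neq0$) gives $p_{k}^{*n_{k}}=c_{F}^{2}p_{k}$, so $p_{k}$ has zeros symmetric about $\T$ and $|c_{k}|=1$. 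Theorem \ref{sec:an-extens-suffr-1}\ref{item:21} then yields the very same estimate, now on the open exterior $\C\setminus\oD$. Thus in both cases the function $g_{k}(z):=(p_{k}(z)-c_{k})/(z^{n_{k}}-c_{k})$ satisfies $\Re g_{k}>\frac12$ on $\C\setminus\oD$, and $|c_{k}|\leq1$.

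Next I transport the estimate to $\D$. Writing $P_{k}:=p_{k}^{*n_{k}}$ and substituting $z=1/\bar{w}$, a short computation using $P_{k}(w)=w^{n_{k}}\overline{p_{k}(1/\bar{w})}$ gives
\[
  \overline{g_{k}(1/\bar{w})}=\frac{P_{k}(w)-\bar{c}_{k}w^{n_{k}}}{1-\bar{c}_{k}w^{n_{k}}}.
\]
Since $|z|>1\iff|w|<1$ and real parts are invariant under conjugation, and since the denominator never vanishes in $\D$ (as $|c_{k}|\le1$), I obtain
\[
  \Re\frac{P_{k}(w)-\bar{c}_{k}w^{n_{k}}}{1-\bar{c}_{k}w^{n_{k}}}>\frac12,\qquad w\in\D.
\]
Finally I let $k\to\infty$. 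As $(n_{k})_{k}$ is strictly increasing, $n_{k}\to\infty$, so $\bar{c}_{k}w^{n_{k}}\to0$ locally uniformly on $\D$ (here I use $|c_{k}|\le1$); together with $P_{k}\to f$ locally uniformly this shows the left-hand side tends to $\Re f$ locally uniformly, whence $\Re f\ge\frac12$ on $\D$. Since $f(0)=1$, the non-negative harmonic function $\Re f-\frac12$ is positive at the origin, so by the minimum principle $\Re f>\frac12$ throughout $\D$, i.e.\ $f\in\mathcal{R}_{1}$.

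The main obstacle I anticipate is the case $p_{k}\in\mathcal{PT}_{n_{k}}(\lambda_{k})$: there $|a_{0}|=|a_{n_{k}}|$ excludes the direct application of Theorem \ref{sec:an-extens-suffr-3}\ref{item:4}, and one must first establish the self-inversivity-type relation $p_{k}^{*n_{k}}=c_{F}^{2}p_{k}$ before appealing to Theorem \ref{sec:an-extens-suffr-1}\ref{item:21}. The second delicate point is that the passage to the limit relies entirely on the uniform decay of the defect term $\bar{c}_{k}w^{n_{k}}$, which in turn rests on the uniform bound $|c_{k}|\le1$ extracted in both cases.
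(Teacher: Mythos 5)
Your proof is correct and follows essentially the same route as the paper's: convert the membership $p_{k}\in\mathcal{PD}_{n_{k}}(\lambda_{k})$ into the half-plane estimate of Theorem \ref{sec:an-extens-suffr-3}\ref{item:4}, transfer it to $\D$ via the $n$-inverse, and pass to the limit. You are in fact more careful than the paper at two points --- the case $p_{k}\in\mathcal{PT}_{n_{k}}(\lambda_{k})$, where $|a_{0}|=|a_{n_{k}}|$ forces the detour through Theorem \ref{sec:an-extens-suffr-1}\ref{item:21}, and the minimum-principle upgrade from $\Re f\geq\frac{1}{2}$ to $\Re f>\frac{1}{2}$ --- both of which the paper's terse proof glosses over.
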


The proofs of Theorems \ref{sec:an-extens-suffr-3}\ref{item:4} and
\ref{sec:an-extens-suffr-5}, as well as Suffridge's approximation technique
from \cite{suffridge}, have led us to a new proof of the following
version of the Herglotz representation formula.

\begin{theorem}[Herglotz representation formula $\mbox{\cite{herglotz}}$]
  \label{sec:furth-prop-char-3}
  A function $f$ analytic in $\D$ satisfies $f(0)=1$ and $\Re f(z) > 0$ for
  all $z\in\D$ if, and only if, there is a strictly increasing sequence
  $(m_{n})_{n}\subset \N$ and positive numbers $s_{k}^{(n)}$,
  $k\in\{1,\ldots,m_{n}\}$ with $s_{1}^{(n)} + \cdots + s_{m_{n}}^{(n)} = 1$
  for all $n\in\N$ such that
  \begin{equation*}
    f(z) = \lim_{n\rightarrow \infty} \sum_{k=1}^{m_{n}} s_{k}^{(n)} \frac{1+
      e^{2\pi i k/m_{n}}z}{1-e^{2\pi i k/m_{n}}z} \qquad \mbox{uniformly on
      compact subsets of } \D. 
  \end{equation*}
\end{theorem}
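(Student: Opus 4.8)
The plan is to prove both directions of the Herglotz representation formula (Theorem~\ref{sec:furth-prop-char-3}), with the forward direction (existence of the discrete approximation) being where Suffridge's technique and the machinery built in this paper come into play.

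For the easy direction, I would start by observing that each Herglotz-type summand $\frac{1+e^{2\pi i k/m_{n}}z}{1-e^{2\pi i k/m_{n}}z}$ is a M\"obius transformation sending $\D$ into the right half-plane $\{\Re w>0\}$ and taking $0$ to $1$. Since the coefficients $s_{k}^{(n)}$ are positive and sum to $1$, each partial sum $\sum_{k=1}^{m_{n}} s_{k}^{(n)} \frac{1+e^{2\pi i k/m_{n}}z}{1-e^{2\pi i k/m_{n}}z}$ is a convex combination of such functions, hence itself maps $\D$ into $\{\Re w > 0\}$ and satisfies the normalization at $0$. Both properties $\Re f > 0$ and $f(0)=1$ are preserved under locally uniform limits (the limit function cannot take purely imaginary values on $\D$ by the open mapping theorem unless it is constant, and $f(0)=1$ gives $\Re f(0) = 1 > 0$), so the limit $f$ lies in the stated Carath\'eodory class. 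This direction is essentially a soft compactness-and-convexity argument and should present no obstacle.

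For the nontrivial direction, suppose $f$ is analytic in $\D$ with $f(0)=1$ and $\Re f > 0$; I want to manufacture the discrete approximating sums explicitly. The key link is the remark preceding the theorem: the functions in $\mathcal{R}_{1}$ (where $\Re f>\tfrac12$, $f(0)=1$) are exactly the locally uniform limits of polynomials $p_{k}\in\mathcal{PT}_{n_{k}}(\lambda_{k})$ with $p_{k}(0)=1$, by Suffridge's result quoted just before Theorem~\ref{sec:an-extens-suffr-5}. The plan is to reduce the Carath\'eodory class $\{\Re f>0\}$ to the class $\mathcal{R}_{1}$ by the normalization $g := \tfrac12(f+1)$, which satisfies $g(0)=1$ and $\Re g > \tfrac12$, so $g\in\mathcal{R}_{1}$. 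Then I would invoke Suffridge's characterization to write $g$ as a locally uniform limit of normalized polynomials in $\mathcal{PT}_{n_{k}}(\lambda_{k})$, and use Theorem~\ref{sec:an-extens-suffr-1}\ref{item:21} to express each such polynomial, via the map $\frac{p(z)-a_0}{a_n z^n - a_0}$ onto $\{\Re w>\tfrac12\}$, in a form from which the nodes $e^{2\pi i k/m_n}$ and positive weights summing to $1$ can be read off. The extremal polynomials $Q_{n}(\lambda;z)$ and, in the limiting regime $\lambda\to 0$ where the zeros equidistribute (cf.~(\ref{eq:43}), where $Q_n(\tfrac{2\pi}{n};z)=1+z^n$ has its zeros at the $n$-th roots of $-1$), are precisely what produce the equally spaced nodes $e^{2\pi i k/m_n}$ appearing in the statement.

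The main obstacle will be bookkeeping the passage from the abstract pre-coefficient/extremal description to the concrete Herglotz sum with \emph{equally spaced} nodes and the correct positivity and normalization of weights. Suffridge's approximation gives polynomials whose associated measures are supported at the zeros of the relevant extremal polynomial; I must verify that these can be taken at the roots of unity $e^{2\pi i k/m_n}$ (rather than at some arbitrary configuration on $\T$) while retaining both $s_{k}^{(n)}>0$ and $\sum_k s_{k}^{(n)}=1$. Concretely, the step that requires care is converting the ratio in Theorem~\ref{sec:an-extens-suffr-1}\ref{item:21}, which describes an $n$-fold covering of the half-plane $\Re w>\tfrac12$, into a genuine finite Herglotz sum: the $n$-fold covering must be unfolded into a single-valued convex combination, and one checks that the partial-fraction decomposition of such a rational map produces exactly the summands $\frac{1+e^{2\pi i k/m_n}z}{1-e^{2\pi i k/m_n}z}$ with positive residues. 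Once this identification of residues with the weights $s_k^{(n)}$ is established and the sum normalized through $g(0)=1$, undoing the reduction $f=2g-1$ recovers the representation for the original Carath\'eodory function, completing the proof.
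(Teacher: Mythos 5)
Your ``if'' direction is fine and matches the paper, which simply calls it clear: each kernel maps $\D$ into the right half-plane, convex combinations and locally uniform limits preserve this, and $f(0)=1$ rules out degeneracy.

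For the ``only if'' direction, however, your route has a genuine gap, and it is located exactly where you place the ``main obstacle.'' The step of converting the $n$-fold covering map $\frac{p(z)-a_{0}}{a_{n}z^{n}-a_{0}}$ of Theorem \ref{sec:an-extens-suffr-1}(\ref{item:21}) into a convex combination $\sum_{k} s_{k}\frac{1+e^{2\pi ik/m}z}{1-e^{2\pi ik/m}z}$ with positive weights at the roots of unity is not a bookkeeping exercise: it \emph{is} the polynomial case of the Herglotz formula, and you assert it (``one checks \dots'') rather than prove it. Concretely, three things go wrong or are left unaddressed. First, the poles of that rational map sit at the $n$-th roots of $a_{0}/a_{n}$; for $p\in\mathcal{PT}_{n}(\lambda)$ with $p(0)=1$ this is a point of $\T$ that need not be $1$, so the nodes are equally spaced but rotated off the roots of unity $e^{2\pi ik/n}$, and you give no mechanism for removing the rotation. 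Second, positivity of the residues must be extracted from the covering property; this requires an argument you do not supply. Third, Theorem \ref{sec:an-extens-suffr-1}(\ref{item:21}) describes a map on $\C\setminus\overline{\D}$, while Suffridge's approximation of $\mathcal{R}_{1}$ and the desired conclusion live on compact subsets of $\D$; the object normalized to $1$ at the origin is $p_{k}$ itself, not the rational map (which vanishes at $0$), so the two cannot be identified without an inversion step of the kind used in the proof of Theorem \ref{sec:an-extens-suffr-5}. There is also a structural objection: Suffridge's characterization of $\mathcal{R}_{1}$ is itself of Herglotz strength, so invoking it makes the argument close to circular, whereas the paper advertises a self-contained proof.

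The paper's actual proof avoids all of this with an elementary construction. It takes Taylor partial sums $S_{k_{n}}(r_{n}z)$ with $r_{n}\uparrow 1$ chosen so that $\Re S_{k_{n}}(r_{n}z)>0$ on $\overline{\D}$, symmetrizes to $P=S_{k_{n}}(r_{n}z)+z^{k_{n}}(S_{k_{n}}(r_{n}z))^{*k_{n}}$ of degree $m=2k_{n}$ (the added term tends to $0$ locally uniformly in $\D$), and expands $P(z)/(1-z^{m})$ in partial fractions over the $m$-th roots of unity. The weights $s_{k}^{(n)}=P(e^{2\pi ik/m})/(2m)$ are positive because $P(e^{2\pi ik/m})=2\Re S_{k_{n}}(r_{n}e^{2\pi ik/m})>0$, and they sum to $1$ because $P(0)=1$. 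If you want to salvage your approach you would need to prove the partial-fraction/positivity lemma for $n$-fold covers of a half-plane and handle the node rotation by a further approximation; at that point you will essentially have reproduced the paper's computation in a more roundabout way.
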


We will present our new proof of the Herglotz representation formula,
together with the proofs of all other results in this paper, in Section
\ref{sec:proofthm2}.

\subsection{A continuous transition between the disk and half-plane
  cases of the Grace-Szeg\"o convolution theorem}
\label{sec:cont-conn-betw}

Since they stem from the choice $K=\overline{\D}$, Corollaries
\ref{sec:introduction-3} and \ref{sec:introduction-4} can be regarded as
'disk cases' of the Grace-Szeg\"o convolution theorem. If we consider $K$ to
be equal to a half-plane, then the following 'half-plane case' can be deduced
from Grace's theorem in the same way as Corollary \ref{sec:introduction-3}
(note that $1+az\rightarrow 1$ as $a\rightarrow 0$, which explains why in the
case of unbounded sets we can also make statements about polynomials of
degree $<n$).

\begin{corollary}
  \label{sec:introduction-5}
  Let $H$ be a closed half-plane whose boundary contains the origin and
  suppose $Q$ is of degree $\leq n$. Then $P*_{GS}Q\in\pi_{\leq n}(H)$ for
  all $P\in\pi_{\leq n}(H)$ if, and only if, $Q\in\pi_{\leq n}(\R_{0}^{-})$,
  where $\R_{0}^{-}:=\{z\in\R:z\leq 0\}$.
\end{corollary}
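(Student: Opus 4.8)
The plan is to mirror the proof of Corollary~\ref{sec:introduction-3}, the decisive new geometric input being that a closed half-plane $H$ with $0\in\partial H$ is a convex cone: if $\alpha\in H$ and $t\ge 0$, then $t\alpha\in H$. I would first normalize the half-plane. Writing $H=wH_{0}$ with $w\in\T$ and $H_{0}:=\{z:\Re z\ge 0\}$, and using both the identity $\bigl(P(w\,\cdot)\bigr)*_{GS}Q=(P*_{GS}Q)(w\,\cdot)$ and the fact that $P\in\pi_{\leq n}(wH_{0})$ if, and only if, $P(w\,\cdot)\in\pi_{\leq n}(H_{0})$, one sees that the asserted equivalence for $H$ (with $Q$ held fixed) is equivalent to the same equivalence for $H_{0}$. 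Since $\R_{0}^{-}$ is unaffected, I may assume $H=H_{0}$.

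For the direction ``$Q\in\pi_{\leq n}(\R_{0}^{-})\Rightarrow$ invariance'' I would first treat $\deg P=\deg Q=n$: by Theorem~\ref{sec:introduction-1} with $K=H_{0}$ every zero of $P*_{GS}Q$ has the form $-\alpha\beta$ with $\alpha\in H_{0}$ and $Q(\beta)=0$; as $\beta\le 0$ we have $-\beta\ge 0$, whence $-\alpha\beta=(-\beta)\alpha\in H_{0}$ by the cone property. To remove the degree restriction -- the point of the parenthetical remark preceding the statement -- I would, for $\deg P=p\le n$ and $\deg Q=q\le n$, replace $P$ by $P_{\epsilon}(z):=P(z)(1-\epsilon z)^{n-p}$ and $Q$ by $Q_{\epsilon}(z):=Q(z)(1+\epsilon z)^{n-q}$. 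The zero $1/\epsilon$ of $1-\epsilon z$ lies in $H_{0}$ and the zero $-1/\epsilon$ of $1+\epsilon z$ lies in $\R_{0}^{-}$, so $P_{\epsilon}\in\pi_{n}(H_{0})$, $Q_{\epsilon}\in\pi_{n}(\R_{0}^{-})$, and $P_{\epsilon}*_{GS}Q_{\epsilon}\to P*_{GS}Q$ coefficientwise, hence locally uniformly, as $\epsilon\to 0$. Applying the degree-$n$ case to $P_{\epsilon},Q_{\epsilon}$ and letting $\epsilon\to 0$, Hurwitz's theorem together with the closedness of $H_{0}$ forces every zero of $P*_{GS}Q$ into $H_{0}$ (the case $P*_{GS}Q\equiv 0$ being trivial).

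For the converse I would test the hypothesis against $P_{c}(z):=(1+cz)^{n}$. A direct computation gives $P_{c}*_{GS}Q=Q(cz)$, and $P_{c}\in\pi_{\leq n}(H_{0})$ precisely when its $n$-fold zero $-1/c$ lies in $H_{0}$, i.e.\ when $\Re c\le 0$. The hypothesis then yields $Q(cz)\in\pi_{\leq n}(H_{0})$, that is $\Re(\beta/c)\ge 0$ for every zero $\beta$ of $Q$ and every $c\neq 0$ with $\Re c\le 0$. Choosing $c=i$ and $c=-i$ forces $\Im\beta=0$, and then $c=-1$ forces $\Re\beta\le 0$; hence $\beta\in\R_{0}^{-}$, so $Q\in\pi_{\leq n}(\R_{0}^{-})$.

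I expect the only genuine subtlety to be the passage from the degree-$n$ Grace--Szeg\"o theorem to polynomials of degree $<n$ in the first direction. The limiting argument above rests on the unboundedness of $H_{0}$, which permits the auxiliary zeros to escape to infinity while remaining inside $H_{0}$ and $\R_{0}^{-}$; combined with Hurwitz's theorem this makes the reduction rigorous, and it is exactly the mechanism signalled by the remark ``$1+az\to 1$ as $a\to 0$'' that precedes the statement.
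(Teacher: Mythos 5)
Your proof is correct and follows exactly the route the paper indicates for Corollary~\ref{sec:introduction-5}: a direct application of Theorem~\ref{sec:introduction-1} with $K=H$ combined with the cone property of a half-plane through the origin, the test polynomials $(1+cz)^{n}$ for the converse, and the limiting argument with auxiliary zeros escaping to infinity (the paper's remark that $1+az\rightarrow 1$ as $a\rightarrow 0$) to cover degrees $<n$. No gaps.
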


By considering $H$ to be equal to the upper, lower, and left
half-plane, this implies the following.

\begin{corollary}
  \label{sec:introduction-6} Let $Q$ be of degree $\leq n$. Then
  $P*_{GS}Q\in\pi_{\leq n}(\R)$ for all $P\in\pi_{\leq n}(\R)$ if, and only
  if, $Q\in\pi_{\leq n}(\R_{0}^{-})$, and $P*_{GS}Q\in\pi_{\leq
    n}(\R_{0}^{-})$ for all $P\in\pi_{\leq n}(\R_{0}^{-})$ if, and only if,
  $Q\in\pi_{\leq n}(\R_{0}^{-})$.
\end{corollary}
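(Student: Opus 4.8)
The plan is to deduce both equivalences from Corollary~\ref{sec:introduction-5} by letting the half-plane $H$ range over the upper, lower, and left half-planes
\[
H^{+}:=\{z:\Im z\geq 0\},\qquad H^{-}:=\{z:\Im z\leq 0\},\qquad H^{\ell}:=\{z:\Re z\leq 0\},
\]
each of which has the origin on its boundary. The two geometric facts that drive everything are $\R=H^{+}\cap H^{-}$ and $\R_{0}^{-}=H^{+}\cap H^{-}\cap H^{\ell}$, combined with the crucial feature of Corollary~\ref{sec:introduction-5} that the multiplier class is $\pi_{\leq n}(\R_{0}^{-})$ for \emph{every} admissible $H$. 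Since a polynomial has all its zeros in $K_{1}\cap K_{2}$ exactly when it lies in $\pi_{\leq n}(K_{1})\cap\pi_{\leq n}(K_{2})$, the idea is to certify membership of $P*_{GS}Q$ in $\pi_{\leq n}(\R)$ (resp.\ $\pi_{\leq n}(\R_{0}^{-})$) by certifying it in each of the two (resp.\ three) half-planes separately.

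For the ``if'' direction of the first equivalence I would assume $Q\in\pi_{\leq n}(\R_{0}^{-})$ and take any $P\in\pi_{\leq n}(\R)$. Because $\R\subset H^{+}$ and $\R\subset H^{-}$, the polynomial $P$ lies in both $\pi_{\leq n}(H^{+})$ and $\pi_{\leq n}(H^{-})$; applying Corollary~\ref{sec:introduction-5} once with $H=H^{+}$ and once with $H=H^{-}$ gives $P*_{GS}Q\in\pi_{\leq n}(H^{+})\cap\pi_{\leq n}(H^{-})=\pi_{\leq n}(\R)$. The ``if'' direction of the second equivalence is identical, except that now $P\in\pi_{\leq n}(\R_{0}^{-})$ lies in all three classes (since $\R_{0}^{-}\subset H^{+},H^{-},H^{\ell}$), so three applications of Corollary~\ref{sec:introduction-5} place $P*_{GS}Q$ in $\pi_{\leq n}(H^{+}\cap H^{-}\cap H^{\ell})=\pi_{\leq n}(\R_{0}^{-})$.

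The ``only if'' direction of the second equivalence I would settle with a single test polynomial: since $(1+z)^{n}$ is the identity for $*_{GS}$ and lies in $\pi_{\leq n}(\R_{0}^{-})$, the hypothesis applied to $P=(1+z)^{n}$ yields $Q=P*_{GS}Q\in\pi_{\leq n}(\R_{0}^{-})$ at once.

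The hard part will be the ``only if'' direction of the first equivalence. The same test polynomial $(1+z)^{n}\in\pi_{\leq n}(\R)$ only forces $Q=P*_{GS}Q\in\pi_{\leq n}(\R)$, i.e.\ that $Q$ has real zeros, and promoting this to $\R_{0}^{-}$ means excluding positive real zeros. This cannot be done with real test polynomials in the naive way: for a positive zero $\beta_{0}$ the identity $P*_{GS}(z-\beta_{0})^{n}=(-\beta_{0})^{n}P(-z/\beta_{0})$ shows that $P*_{GS}(z-\beta_{0})^{n}$ keeps only real zeros whenever $P$ does, so such a zero is invisible to every $P\in\pi_{\leq n}(\R)$. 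This is precisely the step where I would expect to have to think carefully: either one must feed in test polynomials with non-real zeros and re-examine why knowing the hypothesis merely for $P\in\pi_{\leq n}(\R)$ is enough, or one is led to reconsider whether the correct invariance condition for the real-line case is that the zeros of $Q$ be real \emph{and of one sign} rather than $Q\in\pi_{\leq n}(\R_{0}^{-})$. Sorting out this forward implication is the crux of the corollary.
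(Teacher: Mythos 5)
Your handling of the two ``if'' directions and of the ``only if'' direction of the second equivalence is exactly the paper's intended argument: the paper proves this corollary in a single sentence, by applying Corollary \ref{sec:introduction-5} with $H$ equal to the upper, lower, and left closed half-planes, and your intersection identities $\pi_{\leq n}(\R)=\pi_{\leq n}(H^{+})\cap\pi_{\leq n}(H^{-})$ and $\pi_{\leq n}(\R_{0}^{-})=\pi_{\leq n}(H^{+})\cap\pi_{\leq n}(H^{-})\cap\pi_{\leq n}(H^{\ell})$ are the only reasonable way to flesh that sentence out. So for three of the four implications you and the paper coincide.

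The obstacle you flag in the fourth implication is not a gap in your strategy but a genuine defect in the statement, and your analysis of it is correct. Corollary \ref{sec:introduction-5} with $H=H^{\pm}$ requires the convolution hypothesis for \emph{all} $P\in\pi_{\leq n}(H^{\pm})$, a strictly larger class than $\pi_{\leq n}(\R)$, so it simply cannot be invoked here; and your observation that $Q(z)=(z-\beta_{0})^{n}$ with $\beta_{0}>0$ gives $P*_{GS}Q(z)=(-\beta_{0})^{n}P(-z/\beta_{0})$ is an outright counterexample to the forward implication of the first equivalence as stated (already for $n=1$, $Q=z-1$: here $P*_{GS}Q=-a_{0}+a_{1}z$ is real-rooted whenever $P=a_{0}+a_{1}z$ is, yet $Q\notin\pi_{\leq 1}(\R_{0}^{-})$). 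The correct conclusion in that direction is the classical P\'olya--Schur one, namely that $Q$ has all zeros real \emph{and of one sign}, i.e.\ $Q\in\pi_{\leq n}(\R_{0}^{-})\cup\pi_{\leq n}(\R_{0}^{+})$; mixed signs really do fail (e.g.\ $Q=(z-1)(z+2)$ and $P=z^{2}-1$ give $P*_{GS}Q=z^{2}+2$), so your second suggested resolution is the right one. The paper's one-line derivation overlooks this asymmetry between the hypothesis classes of Corollaries \ref{sec:introduction-5} and \ref{sec:introduction-6}; since Corollary \ref{sec:introduction-6} is purely motivational and is not used in any later proof, nothing else in the paper is affected.
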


Disks and half-planes are obvious candidates when looking for zero regions in
the complex plane that stay invariant under the Grace-Szeg\"o convolution
since they appear explicitly in the statement of Grace's theorem. It seems,
however, that until now the question whether there is a continuous transition
between the disk and the half-plane cases has not been considered. This is
even more surprising since very recently the question of linear mappings in
$\C_{n}[z]$ which preserve $\pi_{n}(\Omega)$ for disks, half-planes, and
their boundaries, was completely solved by Borcea and Br\"and\'en
\cite{borcebraend09} (see also \cite[Thm. 1.1]{rusch82} for the linear
preservers of $\pi_{n}(\D)$).

Our interest in this question was strongly motivated by a recent series of
papers by Ruscheweyh and Salinas (and Sugawa)
\cite{ruschsal08,ruschsal09,ruschsalsug09}, in which a limit version of
Theorem \ref{sec:an-extens-suffr-1} is extended to domains other than $\D$.
More exactly, as explained in \cite{ruschsal09}, for every open disk or
half-plane $\Omega$ that contains the origin there are two unique parameters
$\tau \in \mathbb{C}\setminus \{0\}$ and $\gamma\in [0,1]$ such that $\Omega$
is the image of $\D$ under a M\"obius transformation of the form
\begin{equation*}
  w_{\tau,\gamma} (z):= \frac{\tau z}{1 + \gamma z}.
\end{equation*}
We write $\Omega_{\tau,\gamma}$ for such a circular domain and note that, for
all $\tau\in \mathbb{C}\setminus \{0\}$, $\Omega_{\tau,0}$ is a disk centered
at the origin and $\Omega_{\tau,1}$ is an open half-plane containing the
origin. For $\gamma\in[0,1)$ we also define
\begin{align*}
  I_{\gamma}:=&\; \{z\in\C:|z| + \gamma |1+z| < 1\}, \\
  O_{\gamma}:=&\; \{z\in\C:|z| - \gamma |1+z| > 1\}.
\end{align*}
$\overline{I}_{\gamma}$ and $\overline{O}_{\gamma}$ are families of sets which,
when $\gamma$ increases from $0$ to $1$, decrease from
$\overline{I}_{0}=\overline{\D}$ and $\overline{O}_{0}=\C\setminus\D$ to
\begin{equation}
  \label{eq:13}
  \overline{I}_{1}:=\bigcap_{\gamma\in[0,1)} \overline{I}_{\gamma} = [-1,0]
  \quad \mbox{and} \quad
  \overline{O}_{1}:=\bigcap_{\gamma\in[0,1)} \overline{O}_{\gamma} = (-\infty,-1],
\end{equation}
respectively. For $\gamma\in(0,1)$, $I_{\gamma}$ is the interior of the inner
loop of the limacon of Pascal, and $O_{\gamma}$ is the open exterior of the
limacon of Pascal. See \cite{ruschsal09} and the figures therein and note
that, with $L_{\gamma}$ and $\Omega_{\gamma}^{*}$ as defined there, we have
$I_{\gamma} = - L_{\gamma}$ and $O_{\gamma} =-
\C\setminus\overline{\Omega}_{\gamma}^{*}$. 

The non-circular sets $I_{\gamma}$ and $O_{\gamma}$ are
zero-domains which stay invariant under Grace-Szeg\"o convolution.

\begin{theorem}
  \label{sec:main-results}
  Let $\tau \in \mathbb{C}\setminus \{0\}$, $\gamma\in [0,1)$, and suppose $Q$
  is of degree $\leq n$. Then
  \begin{enumerate}
  \item \label{item:9} $P*_{GS}Q\in\pi_{n}(\Omega_{\tau,\gamma})$ for
    all $P\in\pi_{n}(\overline{\Omega}_{\tau,\gamma})$ if, and only
    if, $Q\in\pi_{n}(I_{\gamma})$,
  \item \label{item:15} $P*_{GS}Q\in\pi_{n}(\Omega_{\tau,\gamma})$ for
    all $P\in\pi_{n}(\Omega_{\tau,\gamma})$ if, and only
    if, $Q\in\pi_{n}(\overline{I}_{\gamma})$,
  \item \label{item:10} $P*_{GS}Q\in\pi_{\leq
      n}(\C\setminus\overline{\Omega}_{\tau,\gamma})$ for all
    $P\in\pi_{\leq n}(\C\setminus\Omega_{\tau,\gamma})$ if, and only if,
    $Q\in\pi_{\leq n}(O_{\gamma})$,
  \item \label{item:16} $P*_{GS}Q\in\pi_{\leq
      n}(\C\setminus\overline{\Omega}_{\tau,\gamma})$ for all
    $P\in\pi_{\leq n}(\C\setminus\overline{\Omega}_{\tau,\gamma})$ if, and
    only if, $Q\in\pi_{\leq n}(\overline{O}_{\gamma})$,
  \item \label{item:11} $P*_{GS}Q\in\pi_{n}(I_{\gamma})$ for all
    $P\in\pi_{n}(\overline{I}_{\gamma})$ if, and only if,
    $Q\in\pi_{n}(I_{\gamma})$, and
  \item\label{item:12} $P*_{GS}Q\in\pi_{\leq n}(O_{\gamma})$ for all
    $P\in\pi_{\leq n}(\overline{O}_{\gamma})$ if, and only if,
    $Q\in\pi_{\leq n}(O_{\gamma})$.
  \end{enumerate}
\end{theorem}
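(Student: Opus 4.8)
The plan is to reduce all six statements to the classical Grace--Szeg\"o theorem (Theorem \ref{sec:introduction-1}) for disks and their exteriors, together with one geometric lemma exhibiting $I_{\gamma}$ and $O_{\gamma}$ as intersections of the circular domains $\Omega_{\tau,\gamma}$. I would begin with the four ``circular'' statements (\ref{item:9}), (\ref{item:15}), (\ref{item:10}), (\ref{item:16}). Since $w_{\tau,\gamma}(z)=\tau\,w_{1,\gamma}(z)$, we have $\Omega_{\tau,\gamma}=\tau\,\Omega_{1,\gamma}$, and the elementary identity $(P*_{GS}Q)(\tau z)=P(\tau z)*_{GS}Q(z)$ shows that each statement is invariant under the simultaneous scaling $P(z)\mapsto P(\tau z)$, $\Omega\mapsto\tau\Omega$; this lets me assume $\tau=1$. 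Writing a zero $\alpha$ of $P$ as $\alpha=w_{\tau,\gamma}(\zeta)$ with $\zeta\in\oD$ and solving $-\alpha\beta=w_{\tau,\gamma}(\xi)$ for $\xi$, the factor $\tau$ cancels and yields the M\"obius map $\xi=-\beta\zeta/(1+\gamma(1+\beta)\zeta)$. The crux of the circular cases is the computation that this map sends $\oD$ into $\D$ (respectively $\D$ into $\D$, and the two exterior analogues) exactly when $\beta\in I_{\gamma}$ (respectively $\beta\in\overline{I}_{\gamma}$, $\beta\in O_{\gamma}$, $\beta\in\overline{O}_{\gamma}$); here the key step is minimising $|1+\gamma(1+\beta)\zeta|$ over $|\zeta|=1$, whose value $1-\gamma|1+\beta|$ reproduces the limacon inequality $|\beta|+\gamma|1+\beta|<1$.

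With this dictionary in hand, the ``if'' directions are immediate applications of Grace's theorem: every zero of $P*_{GS}Q$ has the form $-\alpha\beta$ with $\alpha$ in the (closed) disk or its exterior and $\beta$ a zero of $Q$, and the M\"obius computation places all such points in the required target domain. For the exterior cases I would note that $0\notin\overline{O}_{\gamma}$, so $Q(0)\neq0$ and the exterior form of Theorem \ref{sec:introduction-1} indeed applies. For the ``only if'' directions I would feed in the extremal polynomials $(1+cz)^{n}$: since $(1+cz)^{n}*_{GS}Q=Q(cz)$, letting $-1/c=\alpha$ run over all nonzero points of the $P$-domain forces $-\alpha\beta\in\Omega_{1,\gamma}$ (respectively the exterior) for every zero $\beta$ of $Q$, and the same M\"obius computation read backwards gives $\beta\in I_{\gamma}$ (respectively $O_{\gamma}$, etc.); the degenerate value $\alpha=0$ is handled by continuity.

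For the two non-circular statements (\ref{item:11}) and (\ref{item:12}) the essential new ingredient is the geometric identity
\begin{equation*}
  I_{\gamma}=\bigcap_{|\tau+\gamma|=1}\Omega_{\tau,\gamma},
  \qquad
  O_{\gamma}=\bigcap_{|\tau+\gamma|=1}\bigl(\C\setminus\overline{\Omega}_{\tau,\gamma}\bigr),
\end{equation*}
together with the corresponding identities for the closures. These follow from the description $\Omega_{\tau,\gamma}=\{\beta:|\beta|<|\tau-\gamma\beta|\}$ by computing the infimum and supremum of $|\tau-\gamma\beta|$ as $\tau$ runs over the circle $|\tau+\gamma|=1$; the extreme values are $1\mp\gamma|1+\beta|$, exactly reproducing the limacon inequalities. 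Granting this, the ``if'' direction of (\ref{item:11}) follows by writing $P\in\pi_{n}(\overline{I}_{\gamma})$ as lying in every $\pi_{n}(\overline{\Omega}_{\tau,\gamma})$ with $|\tau+\gamma|=1$, applying statement (\ref{item:9}) for each such $\tau$, and intersecting; statement (\ref{item:12}) is obtained the same way from (\ref{item:10}). The ``only if'' directions are now trivial, because $-1\in\overline{I}_{\gamma}\cap\overline{O}_{\gamma}$: the identity $(1+z)^{n}*_{GS}Q=Q$ shows that $(1+z)^{n}$ lies in the relevant $P$-class and that $P*_{GS}Q=Q$, so the invariance forces $Q$ into the desired class directly.

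The main obstacle is the \emph{exact} (not merely approximate) matching between the analytic and geometric sides in the two computations above: verifying that $\xi$ sends the closed disk into the open disk precisely on $\partial I_{\gamma}$, and that $|\tau+\gamma|=1$ is the correct locus making the intersection equal $I_{\gamma}$ rather than a larger or smaller region (one must in particular rule out spurious points with $\gamma|1+\beta|\geq1$, which is where the hypothesis $\gamma<1$ enters). Alongside this I would have to track the boundary bookkeeping throughout---open versus closed domains, degree $n$ versus degree $\leq n$ for the unbounded exterior cases, and the hypothesis $Q(0)\neq0$ needed to invoke the exterior form of Grace's theorem---since it is exactly this bookkeeping that separates the six statements from one another.
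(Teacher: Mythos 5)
Your proposal is correct, and for statements (a) and (b) it coincides with the paper's argument: the same M\"obius map $\omega(\zeta)=-\beta\zeta/(1+\gamma(1+\beta)\zeta)$, the same identity $-\beta\, w_{\tau,\gamma}(\zeta)=w_{\tau,\gamma}(\omega(\zeta))$ reducing the limacon inequality to a modulus estimate on $\T$ plus the maximum principle, and the same extremal polynomials $(1-z/\alpha)^{n}$ for the converse. The differences lie in how the remaining four statements are derived. For the exterior cases (c) and (d) the paper does not redo the M\"obius analysis on $\C\setminus\oD$ and does not invoke the exterior form of Grace's theorem at all; instead it applies the $n$-inverse $P\mapsto P^{*n}$, which carries $\pi_{\leq n}(\C\setminus\Omega_{\tau,\gamma})$ bijectively onto $\pi_{n}(\overline{\Omega}_{(\gamma^{2}-1)/\overline{\tau},\gamma})$ and $\pi_{\leq n}(O_{\gamma})$ onto $\pi_{n}(I_{\gamma})$, and then quotes (a); this sidesteps exactly the degree-$<n$ and $Q(0)\neq 0$ bookkeeping you flag, since a polynomial of degree $m<n$ with no zero at the origin has an $n$-inverse of exact degree $n$. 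For (e) and (f) the routes genuinely diverge: you prove the geometric identity $I_{\gamma}=\bigcap_{|\tau+\gamma|=1}\Omega_{\tau,\gamma}$ (which is correct --- the potential spurious points with $\gamma|1+\beta|\geq 1$ are excluded precisely because $\gamma<1$) and intersect the conclusions of (a) over this family, whereas the paper never needs any such identity: it argues formally that $R*_{GS}(Q*_{GS}P)=(R*_{GS}Q)*_{GS}P\in\pi_{n}(\Omega_{\tau,\gamma})$ for every $R\in\pi_{n}(\overline{\Omega}_{\tau,\gamma})$, by applying (a) and then (b), and then invokes the \emph{only if} half of (a) to conclude $Q*_{GS}P\in\pi_{n}(I_{\gamma})$. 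Your version makes the geometry of the limacon as an envelope of disks explicit, which is illuminating in its own right; the paper's version buys economy, exploiting only that (a) and (b) are equivalences and that the Grace--Szeg\"o convolution is associative, with no further computation.
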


Note how in the case $\gamma=0$ this theorem implies the disk-cases of
Grace's theorem, while for $\gamma\rightarrow 1$ we obtain certain half-plane
cases of Grace's theorem (most notably Corollary \ref{sec:introduction-6}).

In \cite{ruschsal08,ruschsal09,ruschsalsug09} convolution
invariance results concerning certain classes of functions which are analytic
on the sets $\Omega_{\tau}$ and $I_{\gamma}$ are obtained by resorting to a
limiting case of Suffridge's theorem (i.e. Theorem
\ref{sec:an-extens-suffr-1}) which deals with the convolution of starlike and
convex univalent mappings on $\D$. It is therefore natural to ask whether
there is some kind of extension of Theorem \ref{sec:an-extens-suffr-1} from
which the results in \cite{ruschsal08,ruschsal09,ruschsalsug09} can be
obtained as limiting cases.  

\begin{problem}
  Find Suffridge-like extensions of the statements in Theorem
  \ref{sec:main-results}.
\end{problem}

As noted above, Borcea and Br{\"a}nd{\'e}n \cite{borcebraend09} found a
complete characterization of all linear operators on the space of complex
polynomials which preserve the sets $\pi_{n}(\Omega)$ and
$\pi_{n}(\partial\Omega)$ for disks or half-planes $\Omega$. Theorem
\ref{sec:main-results} naturally leads to the question for the linear
preservers of $\pi_{n}(\overline{O}_{\gamma})$ and
$\pi_{n}(\overline{I}_{\gamma})$ for $\gamma\in(0,1]$. Because of
(\ref{eq:13}) this includes the problem, posed in \cite{borcebraend09}, to
classify all linear preservers of $\pi_{n}(\Omega)$ when $\Omega$ is a ray or
a finite interval.

Theorem \ref{sec:main-results} is essentially only a corollary of Borcea's
and Br\"and\'en's results from \cite{borcebraend09}. Nevertheless in Section
\ref{sec:proofthm2} we will give a short self-contained proof which uses only
the original Grace-Szeg\"o convolution theorem.

\section{Preliminaries}

Before we can proceed to the proofs of the main results, we need to mention
certain facts regarding $n$-inverses and $n$-self-inversive
polynomials (many of them are fairly obvious, some of them are explained in
more detail in \cite[Sec. 7.1]{sheil}). 

Recall that above, for a polynomial $P(z)=\sum_{k=0}^{n} a_{k} z^{k}$
of degree $\leq n$, we defined
\begin{equation*}
  P^{*n}(z) := z^{n} \overline{P\left(\frac{1}{\overline{z}}\right)} = 
  \sum_{k=0}^{n} \overline{a}_{n-k} z^{k}
\end{equation*}
and called $P^{*n}$ the \emph{$n$-inverse} of $P$. The zeros of $P^{*n}$
are the zeros of $P$ reflected with respect to $\T$ (if we consider a polynomial
$P$ of degree $m<n$ as a polynomial with a zero of order $n-m$ at
$\infty$), and the mapping $P\mapsto P^{*n}$ has
the following properties (in order to verify the last one, note that, by
(\ref{eq:3}), one has $C_{n-k}^{(n)}(\lambda) = C_{k}^{(n)}(\lambda)\in\R$ for
$k=0,\ldots,n$):
\begin{equation}
  \label{eq:31}
  \begin{split}
    (aP+bQ)^{*n} =  \overline{a} P^{*n} +\overline{b} Q^{*n} \quad&
    \mbox{for all} \quad a,b\in\C, \; P,Q\in \pi_{\leq n}(\C),\\
    (P(cz))^{*n} = \;\overline{c}^{n} P^{*n}(cz) \quad &\mbox{for all}
    \quad c \in \T, \; P \in\pi_{\leq n}(\C), \\
    (P*_{\lambda} Q)^{*n} = \; P^{*n}*_{\lambda} Q^{*n} \quad &\mbox{for all}
    \quad \lambda \in [0,2\pi/n), \; P,Q \in\pi_{\leq n}(\C).
  \end{split}
\end{equation}

A polynomial $P$ of degree $\leq n$ is called \emph{$n$-self-inversive} if $P =
P^{*n}$, and every such polynomial belongs to the set $\mathcal{ST}_{n}$ of
polynomials of degree $\leq n$ whose zeros lie symmetrically around
$\T$. Conversely, for every $P\in\mathcal{ST}_{n}$ there is a uniquely
determined $c_{P}\in \{e^{it}:t\in[0,\pi)\}$ such that $c_{P}P$ belongs
to $\mathcal{SI}_{n}$. It follows that if $P\in\mathcal{ST}_{n}$, then
\begin{equation}
  \label{eq:48}
  e^{-int/2}c_{P}P(e^{it})\in\R \quad\mbox{for all}\quad t\in\R,
\end{equation}
and thus that
\begin{equation*}
  \Re \frac{e^{it} P'(e^{it})}{P(e^{it})} = \frac{n}{2}
  \quad\mbox{for all}\quad t\in\R \quad \mbox{with} \quad P(e^{it})\neq 0.
\end{equation*}
In particular,
\begin{equation}
  \label{eq:47}
  \mbox{if}\quad P\in\mathcal{ST}_{n} \quad \mbox{then every critical point 
    of $P$ on $\T$ is a multiple zero of $P$.}
\end{equation}

If $P$ and $Q$ belong to $\pi_{n}(\T)$, then we say that $P$ and $Q$ have
\emph{interspersed zeros} and write $P\closedcurlyvee Q$ if the zeros of $P$ and
$Q$ alternate on $\T$. If $P\closedcurlyvee Q$ and $P$ and $Q$ are co-prime,
then we write $P\curlyvee Q$ and say that $P$ and $Q$ have \emph{strictly
  interspersed zeros}.

The following connection between interspersion and zero separation is proven in
\cite[Lem. 21]{lam11} (cf. (\ref{eq:9}) for the definition of $P_{+}$ and
$P_{-}$).

\begin{lemma}
  \label{sec:defin-prel-1}
  Suppose all zeros of $P$ lie on $\T$ and let
  $\lambda\in[0,\frac{2\pi}{n}]$. Then
  $P\in\overline{\mathcal{T}}_{n}(\lambda)$ if, and only if,
  $P_{+}\closedcurlyvee P_{-}$. Furthermore,
  $P\in\mathcal{T}_{n}(\lambda)$ if, and only if,
  $P_{+}\curlyvee P_{-}$.
\end{lemma}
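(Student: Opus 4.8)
There are two complementary ways to attack this, and the plan is to combine them: an elementary one that makes the geometry transparent, and an analytic one that closes the argument rigorously by reusing Lemma~\ref{sec:main-result:-an-1}. In both, the starting point is to locate the zeros of $P_{\pm}$. Writing the zeros of $P$ as $e^{i\theta_{1}},\dots,e^{i\theta_{n}}$ with $\theta_{1}\le\dots\le\theta_{n}<\theta_{1}+2\pi$, the definition $P_{\pm}(z)=P(e^{\pm i\lambda/2}z)$ from (\ref{eq:9}) shows that the zeros of $P_{+}$ sit at the angles $\alpha_{j}:=\theta_{j}-\lambda/2$ and those of $P_{-}$ at $\beta_{j}:=\theta_{j}+\lambda/2$; thus each zero family is the zero family of $P$ rotated by $\mp\lambda/2$ and, crucially, they are rigidly linked by $\beta_{j}=\alpha_{j}+\lambda$. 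Since $P\in\pi_{n}(\T)\subset\mathcal{ST}_{n}$, identity (\ref{eq:48}) also shows that the rational function $w(z):=e^{-in\lambda/2}P_{+}(z)/P_{-}(z)$ is real-valued on $\T$, with its zeros at the $\alpha_{j}$ and its poles at the $\beta_{j}$; so ``$P_{+}\closedcurlyvee P_{-}$'' is exactly the statement that the zeros and poles of $w$ interlace on $\T$.

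First I would run the elementary computation. Because $\beta_{j}=\alpha_{j}+\lambda$ with $0<\lambda\le\frac{2\pi}{n}$, the $\alpha$'s and $\beta$'s can alternate around $\T$ only in the ``natural'' cyclic order $\alpha_{1}\le\beta_{1}\le\alpha_{2}\le\beta_{2}\le\dots\le\alpha_{n}\le\beta_{n}\le\alpha_{1}+2\pi$. Reading off the links, $\beta_{j}\le\alpha_{j+1}$ is equivalent to $\theta_{j+1}-\theta_{j}\ge\lambda$, and the closing link $\beta_{n}\le\alpha_{1}+2\pi$ to the wrap-around gap being $\ge\lambda$; hence this chain holds precisely when every consecutive pair of zeros of $P$ is separated by an angle $\ge\lambda$, i.e.\ precisely when $P\in\overline{\mathcal{T}}_{n}(\lambda)$. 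Replacing each ``$\le$'' by ``$<$'' (which forces the zeros of $P_{+}$ and $P_{-}$ to be disjoint, so $P_{+}\curlyvee P_{-}$) turns this into all gaps being $>\lambda$, i.e.\ $P\in\mathcal{T}_{n}(\lambda)$. This immediately yields the ``gap $\Rightarrow$ interspersion'' direction and exhibits the correct weak/strict dichotomy.

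The step that genuinely needs care — and the main obstacle — is the converse: showing that interspersion forces the natural chain, i.e.\ ruling out a ``spurious'' alternation in which various $\beta_{k}$'s leapfrog past the $\alpha_{j}$'s while the global counts (there are $n$ of each, and each arc between consecutive $P_{+}$-zeros must contain exactly one $P_{-}$-zero) still balance. A purely local count does not settle this, since a small gap can be compensated by a zero sitting $\lambda$ behind it, so the obstruction is genuinely global. The clean way to dispose of it is the analytic route: the zeros and poles of $w$ interlace on $\T$ if and only if $w$ is monotone along $\T$ between its poles, which by the Hermite--Biehler correspondence (cf.\ Lemma~\ref{sec:self-invers-polyn-2}) is equivalent to $w$ mapping $\C\setminus\overline{\D}$ into a half-plane bounded by $\R$; by Lemma~\ref{sec:main-result:-an-1} the relevant half-plane is the upper one, and membership in $\overline{\mathcal{T}}_{n}(\lambda)$ (resp.\ $\mathcal{T}_{n}(\lambda)$) corresponds to this map being onto (resp.\ $n$-to-one onto, i.e.\ with no critical point on $\T$). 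Combining this equivalence with the elementary direction closes both statements. Finally, the degenerate endpoints follow from the conventions for $\closedcurlyvee$ and $\curlyvee$ together with the explicit descriptions $\overline{\mathcal{T}}_{n}(0)=\pi_{n}(\T)$ and $\overline{\mathcal{T}}_{n}(\frac{2\pi}{n})=\{a(1+bz^{n})\}$: at $\lambda=\frac{2\pi}{n}$ the pairing degenerates to $\beta_{j}=\alpha_{j+1}$, so $P_{+}$ and $P_{-}$ share all their zeros (weakly interspersed but never coprime, matching $\mathcal{T}_{n}(\frac{2\pi}{n})=\emptyset$), while a repeated zero of $P$ produces a gap $0<\lambda$ together with a repeated, hence non-interspersing, arc, so multiplicities are handled automatically.
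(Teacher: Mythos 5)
The paper itself offers no proof of this lemma: it is imported verbatim from \cite[Lem.~21]{lam11}, so there is no internal argument to compare yours against. That said, what you propose is essentially the derivation one would assemble from the paper's own toolkit, and it is sound in outline. Your reduction of $P_{+}\closedcurlyvee P_{-}$ to the statement that $w:=e^{-in\lambda/2}P_{+}/P_{-}$ takes real values only on $\T$ is exactly the Hermite--Kakeya step (Lemma~\ref{sec:self-invers-polyn-2}\ref{item:18}) that the author runs in the opposite direction in the proof of Theorem~\ref{sec:an-extens-suffr-4}, and splicing this onto Lemma~\ref{sec:main-result:-an-1} does close the equivalence. You are also right to insist that the combinatorial converse (``interspersion forces the natural pairing $\beta_{j}=\alpha_{j}+\lambda$'') is a genuinely global matter that a local count does not settle; outsourcing it to the analytic characterization is a legitimate way around that, and your elementary computation correctly delivers the forward implication together with the weak/strict dichotomy.

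Three places need tightening. First, before invoking Lemma~\ref{sec:self-invers-polyn-2}\ref{item:18} you must normalize, since $c_{P_{+}}\neq c_{P_{-}}$ in general: the correct pencil is $e^{-in\lambda/4}P_{+}-x\,e^{in\lambda/4}P_{-}$, obtained by applying (\ref{eq:48}) at $t\pm\lambda/2$ — this is precisely why the prefactor $e^{-in\lambda/2}$ appears in $w$. Second, ``by Lemma~\ref{sec:main-result:-an-1} the relevant half-plane is the upper one'' is circular in the direction you need it (interspersion $\Rightarrow$ membership); you must exclude the lower half-plane independently, which the one-line computation $w(\infty)=e^{in\lambda/2}$ with $\Im e^{in\lambda/2}=\sin\frac{n\lambda}{2}>0$ for $\lambda\in(0,\frac{2\pi}{n})$ does. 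Third, the endpoints lie outside the stated range of both (\ref{eq:9}) and Lemma~\ref{sec:main-result:-an-1}, and they do not all ``follow from the conventions'': at $\lambda=\frac{2\pi}{n}$ your degeneration argument covers the extremal polynomials but you still owe the converse that no other $P\in\pi_{n}(\T)$ has $P_{+}\closedcurlyvee P_{-}$ there (e.g.\ because all $n$ gaps being $\geq\frac{2\pi}{n}$ forces equality), while at $\lambda=0$ one has $P_{+}=P_{-}=P$, so $P_{+}\curlyvee P_{-}$ fails for every $P$ even though $\mathcal{T}_{n}(0)\neq\emptyset$ — the strict half of the statement is degenerate at $\lambda=0$ and must be excluded or reinterpreted rather than claimed to hold automatically.
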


Two characterizations of interspersion will be useful in the following.

\begin{lemma}
  \label{sec:self-invers-polyn-2}
  Suppose $P$, $Q\in \mathcal{ST}_{n}$ are such that $P/Q\nequiv
  \operatorname{const}$ and set $F:=P-Q$.
  \begin{enumerate}
  \item \label{item:17} (Hermite-Biehler) If $c_{P} \neq c_{Q}$, then
    $P\closedcurlyvee Q$ if, and only if, $F$ or
    $F^{*n}$ belongs to $\pi_{n}(\overline{\D})$. $P\curlyvee Q$ holds if, and
    only if, either $F$ or $F^{*n}$ belongs to $\pi_{n}(\D)$.
  \item \label{item:18} (Hermite-Kakeya) If $c_{P} = c_{Q}$, then $P$ and $Q$
    have interspersed zeros if, and only if,
    \begin{equation}
      \label{eq:18}
      P-xQ \in \overline{\mathcal{T}}_{n}(0) \quad \mbox{for all} \quad x\in \R.
    \end{equation}
    $P$ and $Q$ have strictly interspersed zeros if, and
    only if, $Q\in\mathcal{T}_{n}(0)$ and
    \begin{equation}
      \label{eq:19}
      P-xQ \in \mathcal{T}_{n}(0) \quad\mbox{for all} \quad x\in \R.
    \end{equation}
  \end{enumerate}
\end{lemma}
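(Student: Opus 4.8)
The plan is to treat both parts as the unit-circle incarnations of the classical Hermite--Biehler and Hermite--Kakeya theorems, with the common engine being the phase-monotonicity recorded in (\ref{eq:48}) and (\ref{eq:47}): for $P\in\mathcal{ST}_{n}$ one has $e^{-int/2}c_{P}P(e^{it})\in\R$, and $\Re[e^{it}P'(e^{it})/P(e^{it})]=n/2$. Throughout (a) I work under the (implicit) hypothesis $P,Q\in\pi_{n}(\T)$ needed for $\closedcurlyvee$ to be defined, so that the boundary data of $P$ and $Q$ can be converted into genuine real-valued functions whose oscillation records interspersion, and the zero location of $F$ can be read from a winding count.

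For part (\ref{item:18}), since $c_{P}=c_{Q}$ I may multiply through by this common constant and assume $P=P^{*n}$, $Q=Q^{*n}$, so that $P-xQ\in\mathcal{ST}_{n}$ for every $x\in\R$. Setting $p(t):=e^{-int/2}P(e^{it})$ and $q(t):=e^{-int/2}Q(e^{it})$, both are real by (\ref{eq:48}), and $P/Q$ restricted to $\T$ is the real rational function $p/q$. The zeros of $P-xQ$ on $\T$ are exactly the solutions of $p(t)=xq(t)$, so the condition ``$P-xQ\in\pi_{n}(\T)=\overline{\mathcal{T}}_{n}(0)$ for all $x\in\R$'' is equivalent to ``$P/Q$ maps $\T$ exactly $n$-to-$1$ onto $\R\cup\{\infty\}$.'' The decisive computation, again from (\ref{eq:47}), is that the circle-Wronskian $pq'-p'q$ has constant sign, so that $p/q$ is strictly monotone between consecutive poles; this forces the poles of $p/q$ (zeros of $Q$) and its zeros (zeros of $P$) to alternate on $\T$ precisely when every real value is attained $n$ times, which is the asserted equivalence with $P\closedcurlyvee Q$. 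The strict statement follows by tracking simplicity: $P\curlyvee Q$ corresponds to $p/q$ having only simple poles and zeros, i.e. to $Q\in\mathcal{T}_{n}(0)$ (the limiting value $x=\infty$) together with $P-xQ\in\mathcal{T}_{n}(0)$ for all $x\in\R$.

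For part (\ref{item:17}) I have $c_{P}\neq c_{Q}$ and so cannot normalise both at once; instead I use $P^{*n}=c_{P}^{2}P$, $Q^{*n}=c_{Q}^{2}Q$ (whence $F^{*n}=c_{P}^{2}P-c_{Q}^{2}Q$) together with the argument principle. Writing $P(e^{it})=e^{int/2}\overline{c_{P}}\,p(t)$ and $Q(e^{it})=e^{int/2}\overline{c_{Q}}\,q(t)$ with $p,q$ real as above, one has $F(e^{it})=e^{int/2}g(t)$ where $g(t):=\overline{c_{P}}\,p(t)-\overline{c_{Q}}\,q(t)$. Because $\overline{c_{P}}$ and $\overline{c_{Q}}$ are non-parallel in $\C$ (their arguments lie in $[0,\pi)$ and differ by something in $(0,\pi)$), the map $(u,v)\mapsto\overline{c_{P}}u-\overline{c_{Q}}v$ is an $\R$-linear isomorphism $\R^{2}\to\C$, so $t\mapsto g(t)$ and $t\mapsto(p(t),q(t))$ wind about the origin through the same total angle up to a fixed orientation. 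When $F$ has no zero on $\T$, the number of zeros of $F$ inside $\D$ equals $\tfrac{1}{2\pi}\Delta\arg F=\tfrac{n}{2}+\tfrac{1}{2\pi}\Delta\arg g$, where $\Delta\arg$ denotes the increment around $\T$. Interspersion of the zeros of $P$ and $Q$ is exactly the statement that $(p,q)$ sweeps through a total angle $\pm n\pi$ about the origin, giving $\Delta\arg g=\pm n\pi$ and hence a zero count of $n$ or $0$; that is, $F$ or $F^{*n}$ lies in $\pi_{n}(\D)$. Reversing the count yields the converse, and the closed/strict dichotomy (membership in $\pi_{n}(\overline{\D})$ versus $\pi_{n}(\D)$, and $\closedcurlyvee$ versus $\curlyvee$) is obtained by admitting boundary zeros and multiplicities.

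The routine computations (reality of $p,q$, the sign of $pq'-p'q$, and the winding-orientation bookkeeping) are not where the difficulty lies. The hard part will be the boundary analysis: making the equivalence ``interspersion $\iff$ monotone unit winding of $(p,q)$'' precise when zeros are multiple or when $P$ and $Q$ share a zero (the $\closedcurlyvee$-but-not-$\curlyvee$ case), and correctly attributing those zeros of $F$ that land on $\T$ (which occur exactly in the non-strict case, and include contributions from any drop in degree of $F$) so that the dichotomy lands in $\pi_{n}(\overline{\D})$ rather than $\pi_{n}(\D)$. A cleaner but technically fussier alternative is to transport the classical real-line Hermite--Biehler and Hermite--Kakeya theorems through a Cayley map sending $\T$ to $\R\cup\{\infty\}$ and $\D$ to a half-plane; this turns $\mathcal{ST}_{n}$ into real polynomials and interspersion on $\T$ into interlacing on $\R$, at the cost of carefully handling the single boundary point sent to $\infty$ and the attendant possible loss of degree.
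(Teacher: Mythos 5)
Your strategy---a from-scratch winding-number proof of Hermite--Biehler on $\T$ for part (\ref{item:17}) and a Wronskian proof of Hermite--Kakeya for part (\ref{item:18})---is a legitimate classical route and is genuinely different from the paper's. The paper does not reprove the classical statements: it cites \cite[Thm.\ 18]{lam11} for part (\ref{item:17}) and for the ``only if'' direction of (\ref{item:18}), and then settles the remaining implication in a few lines by observing that (\ref{eq:18}) forces $R=P/Q$ to be real-valued exactly on $\T$, hence to map $\D$ into a half-plane, hence $P-iQ$ to lie in $\pi_{n}(\overline{\D})$ or $\pi_{n}(\C\setminus\D)$, at which point part (\ref{item:17}) applied to $P$ and $iQ$ (note $c_{iQ}=\pm i c_{Q}\neq c_{P}$) yields interspersion; strictness is obtained by producing a double zero of $P-x_{0}Q$ at any common zero of $P$ and $Q$.

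There is, however, a genuine gap at the crux of your part (\ref{item:18}). You assert that the circle-Wronskian $pq'-p'q$ has constant sign and attribute this to (\ref{eq:47}). That does not follow: (\ref{eq:47}) concerns the critical points of a \emph{single} polynomial in $\mathcal{ST}_{n}$ and carries no information about the relative position of the zeros of $P$ and $Q$. Worse, the claim is false in general for $P,Q\in\pi_{n}(\T)$ with $c_{P}=c_{Q}$: for $P=(z-e^{i\beta})(z-e^{-i\beta})$ and $Q=(z-e^{i\alpha})(z-e^{-i\alpha})$ with $0<\alpha<\beta<\pi$ one gets $p(t)=2\cos t-2\cos\beta$, $q(t)=2\cos t-2\cos\alpha$, and $pq'-p'q=-4\sin t\,(\cos\alpha-\cos\beta)$, which changes sign at $t=0,\pi$ --- and indeed these zeros do not intersperse. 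Constancy of the sign of the Wronskian is essentially \emph{equivalent} to interspersion; it is the conclusion of Hermite--Kakeya, not an available input, so as written your argument is circular at exactly the decisive step. (It can be repaired by deriving the constant sign either from the alternation hypothesis or from (\ref{eq:18}) via a count of solutions of $p=xq$, but that derivation is the theorem.) In part (\ref{item:17}) the skeleton is sound, but everything you defer as ``boundary analysis'' --- multiple zeros, common zeros, and the degree drop of $F$ when the leading coefficients of $P$ and $Q$ coincide, which moves a zero of $F$ to $\infty$ and a zero of $F^{*n}$ to $0\in\D$ --- is precisely what separates the $\closedcurlyvee$/$\pi_{n}(\overline{\D})$ statement from the $\curlyvee$/$\pi_{n}(\D)$ statement, so that part is not yet a complete proof either.
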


\begin{proof}
  Statement (\ref{item:17}) and the 'only if' direction of (\ref{item:18})
  were shown in \cite[Thm. 18]{lam11}.

  If (\ref{eq:18}) holds, then $R(z):=P(z)/Q(z)$ is not constant (by
  hypothesis) and takes real values if, and only if, $z\in\T$. $R$ therefore
  maps $\D$ either onto the upper or lower half-plane. Hence, all zeros of
  $P-iQ$ lie either in $\overline{\D}$ or in $\C\setminus \D$. Since $c_{iQ}
  = \pm ic_{Q}\neq c_{P}$, it follows from (\ref{item:17}) that $P$ and $iQ$
  have interspersed zeros.

  If $Q\in \mathcal{T}_{n}(0)$ and $P\closedcurlyvee Q$
  with a common zero $w=e^{it_{0}}$, then
  \begin{equation}
    \label{eq:7}
    P-xQ = (z-w)(\hat{P}-x\hat{Q})
  \end{equation}
  where $\hat{P}:=P/(z-w)$, $\hat{Q}:=Q/(z-w)$ and $\hat{Q}(w)\neq 0$.
  Since for 
  \begin{equation*}
    x_{0} = \frac{\hat{P}(w)}{\hat{Q}(w)} = \lim_{t\rightarrow t_{0}}
    \frac{e^{-int/2}c_{P}P(e^{it})}{e^{-int/2}c_{Q}Q(e^{it})}
  \end{equation*}
  we have $x_{0}\in\R$, by (\ref{eq:48}), and $\hat{P}(w)-x_{0}\hat{Q}(w)=0$,
  it follows from (\ref{eq:7}) that $P-x_{0}Q$ has a double zero at $w$.
\end{proof}

Observe that the Hermite-Biehler theorem and Lemma \ref{sec:defin-prel-1}
directly imply Theorem \ref{sec:an-extens-suffr-4}.

\section{Proofs}
\label{sec:proofthm2}

\begin{proof}[Proof of Theorem \ref{sec:an-extens-suffr-4}]
  The proofs of (\ref{eq:51}) and (\ref{eq:52}) are very similar, and therefore
  we will only verify (\ref{eq:51}).
  
  Note first that $F$ must have at least one zero in $\D$: if
  $F\in\overline{\mathcal{D}}_{n}(\lambda)\setminus
  \overline{\mathcal{T}}_{n}(\lambda)$, then this is clear; if $F+\zeta F^{*n}
  \in\overline{\mathcal{T}}_{n}(\lambda)$ for all $\zeta\in\T$, then this
  follows from (\ref{eq:53}). Consequently, (\ref{eq:8}) is equivalent to the
  fact that, for all $x\in\R$
  \begin{equation}
    \label{eq:54}
    G_{x}:=e^{-in\lambda/4}F_{+} - x e^{in\lambda/4}F_{-}
    \in \pi_{n}(\overline{\D}) \quad\mbox{with at least one zero in }\D,
  \end{equation}
  i.e. to the fact that 
  \begin{equation}
    \label{eq:26}
    G_{x}/G_{x}^{*n} \quad \mbox{is a Blaschke product of degree $m=1,\ldots,n$
  for all $x\in\R$.}
  \end{equation}
  Since $F\in\pi_{n}(\overline{\D})$ with at least one zero in $\D$, $F/F^{*n}$
  is a Blaschke product of positive degree, and thus (use (\ref{eq:31}) to
  calculate $G_{x}^{*n}(0)$)
  \begin{equation}
    \label{eq:23}
    |(G_{x}/G_{x}^{*n})(0)| = |(F/F^{*n})(0)|<1.
  \end{equation}
  Hence, (\ref{eq:26}) holds if, and only if, ((\ref{eq:23}) is needed for
  the 'if'-direction)
  \begin{equation}
    \label{eq:24}
    G_{x} + \zeta G_{x}^{*n} \in \overline{\mathcal{T}}_{n}(0) 
    \quad\mbox{for all}\quad  x\in\R,\;\zeta\in\T.
  \end{equation}
  Using (\ref{eq:31}), we obtain
  \begin{equation*}
    G_{x} + \zeta G_{x}^{*n} = e^{-in\lambda/4}\left(F+\zeta
      F^{*n}\right)_{+} - x
    e^{in\lambda/4}\left(F+\zeta 
      F^{*n}\right)_{-}.
  \end{equation*}
  It thus follows from the
  Hermite-Kakeya theorem that (\ref{eq:24}) is equivalent to
  \begin{equation*}
    \left(F+\zeta F^{*n}\right)_{+} \closedcurlyvee \left(F+\zeta
      F^{*n}\right)_{-} \quad\mbox{for all}\quad
    \zeta\in\T.
  \end{equation*}
  Because of Lemma \ref{sec:defin-prel-1} this is true if, and only if, $F +
  \zeta F^{*n} \in \overline{\mathcal{T}}_{n}(\lambda)$ for all $\zeta\in\T$.

  If $F\in\overline{\mathcal{D}}_{n}(\lambda)\setminus
  \overline{\mathcal{T}}_{n}(\lambda)$ and $e^{-in\lambda/2}
  F_{+}(z_{0})/F_{-}(z_{0}) = x_{0} \in \R$ for a $z_{0}\in\T$, then we have
  $G_{x_{0}}(z_{0}) = G_{x_{0}}^{*n}(z_{0}) = 0$, where $G_{x}$ is defined as
  in (\ref{eq:54}). From our considerations above it then follows that
  \begin{equation*}
    G_{x_{0}}(z_{0}) + \zeta G_{x_{0}}^{*n}(z_{0}) = 
    e^{-in\lambda/4}\left(F+\zeta F^{*n}\right)_{+} (z_{0})- x_{0}
    e^{in\lambda/4}\left(F+\zeta F^{*n}\right)_{-} (z_{0}) = 0
  \end{equation*}
  for all $\zeta\in\T$. Hence,
  \begin{equation*}
    e^{-in\lambda/2} \frac{\left(F+\zeta F^{*n}\right)_{+} (z_{0})}
    {\left(F+\zeta F^{*n}\right)_{-} (z_{0})}=x_{0},
  \end{equation*}
  for every $\zeta\in\T$ except for those finitely many $\zeta\in\T$ for which
  $\left(F+\zeta F^{*n}\right)_{-} (z_{0}) = 0$. However, if $\zeta_{0}$ is one
  of these $\zeta$, then
  \begin{equation*}
    \lim_{z\rightarrow z_{0}} 
    e^{-in\lambda/2} \frac{\left(F+\zeta_{0} F^{*n}\right)_{+} (z)}
    {\left(F+\zeta_{0} F^{*n}\right)_{-} (z)}
    =
    \lim_{\zeta\rightarrow\zeta_{0}} 
    e^{-in\lambda/2} \frac{\left(F+\zeta F^{*n}\right)_{+} (z_{0})}
    {\left(F+\zeta F^{*n}\right)_{-} (z_{0})} = 
    x_{0}.
  \end{equation*}
\end{proof}

\begin{proof}[Proof of Theorem \ref{sec:an-extens-suffr-2}]
  For $\lambda = \frac{2\pi}{n}$ the assertion follows directly from the
  definition of the class $\overline{\mathcal{D}}_{n}(\frac{2\pi}{n})$.  If
  $\lambda\in(0,\frac{2\pi}{n})$ and
  $F\in\overline{\mathcal{D}}_{n}(\lambda)\setminus
  \overline{\mathcal{T}}_{n}(\lambda)$, then $F+\zeta F^{*n} \in
  \overline{\mathcal{T}}_{n}(\lambda) \subset \mathcal{T}_{n}(0)$ for all
  $\zeta\in \T$ by Theorem \ref{sec:an-extens-suffr-4}, and thus
  $F\in\pi_{n}(\D)$ by (\ref{eq:50}).
\end{proof}

\begin{proof}[Proof of Theorem \ref{sec:an-extens-suffr-3}\ref{item:2}]
  The case $\lambda=0$ is Corollary \ref{sec:introduction-3} and thus it
  suffices to consider only the case $\lambda\in(0,\frac{2\pi}{n})$.

  If $F*_{\lambda}G \in \mathcal{D}_{n}(\lambda)$ for all
  $F\in\overline{\mathcal{D}}_{n}(\lambda)$, then choosing $F =
  Q_{n}(\lambda;z)$ yields $G \in \mathcal{D}_{n}(\lambda)$.

  In order to prove the other direction, we can suppose that we do not have
  simultaneously $F\in \overline{\mathcal{T}}_{n}(\lambda)$ and $G\in
  \mathcal{T}_{n}(\lambda)$, since in this case the assertion follows directly
  from Suffridge's convolution theorem.

  Suppose now that $F\in \overline{\mathcal{D}}_{n}(\lambda)\setminus
  \overline{\mathcal{T}}_{n}(\lambda)$ and $G\in \mathcal{D}_{n}(\lambda)$.
  Then, by Theorem \ref{sec:an-extens-suffr-4}, $P:=G+\zeta e^{it}G^{*n}\in
  \mathcal{T}_{n}(\lambda)$ for all except at most one $t\in[0,2\pi)$ (such
  an exceptional $t$ exists if, and only if, $G\in \mathcal{T}_{n}(\lambda)$
  and for this $t$ we have $P\equiv 0$) and $F + \eta F^{*n}\in
  \overline{\mathcal{T}}_{n}(\lambda)$ for all $\eta\in\T$. Hence, by
  Suffridge's convolution theorem,
  \begin{equation*}
    P*_{\lambda}(F + \eta F^{*n}) = P*_{\lambda}F +
    \eta\overline{c}_{P}^{2} (P*_{\lambda}F)^{*n}\in \mathcal{T}_{n}(\lambda)
  \end{equation*}
  for all $\eta\in\T$ and thus 
  \begin{equation}
    \label{eq:15}
    P*_{\lambda}F = (G+\zeta e^{it}G^{*n})*_{\lambda}F
  \in\mathcal{D}_{n}(\lambda)
  \end{equation}
  by Theorem \ref{sec:an-extens-suffr-4}. If $G\in\mathcal{T}_{n}(\lambda)$,
  then $G+\zeta e^{it}G^{*n} = (1+\zeta e^{it}c_{G}^{2})G$, and hence in the
  case $F\in \overline{\mathcal{D}}_{n}(\lambda)\setminus
  \overline{\mathcal{T}}_{n}(\lambda)$ and $G\in \mathcal{T}_{n}(\lambda)$ the
  assertion follows directly from (\ref{eq:15}). The case $F\in
  \overline{\mathcal{T}}_{n}(\lambda)$ and
  $G\in\mathcal{D}_{n}(\lambda)\setminus \mathcal{T}_{n}(\lambda)$ can be proven
  in a similar way, and therefore it only remains to verify the assertion when
  $F\in \overline{\mathcal{D}}_{n}(\lambda)\setminus
  \overline{\mathcal{T}}_{n}(\lambda)$ and
  $G\in\mathcal{D}_{n}(\lambda)\setminus \mathcal{T}_{n}(\lambda)$.

  Choose such $F$ and $G$ and note that, because of Lemma
  \ref{sec:an-extens-suffr-2}, all zeros of $F$ and $G$ must lie in $\D$.
  Moreover, $F$ and $G$ must satisfy (\ref{eq:15}) and thus, by definition,
  for all $t,x\in\R$ all solutions of
  \begin{equation*}
    e^{-in\lambda/2}\frac{((G + \zeta e^{it} G^{*n})*_{\lambda} F)_{+}} 
    {((G + \zeta e^{it} G^{*n})*_{\lambda} F)_{-}} = x
  \end{equation*}
  lie in $\D$. Hence, for all $t,x\in\R$,
  \begin{equation}
    \label{eq:32}
    B_{t}(z):=(e^{-in\lambda/4} F_{+}-xe^{in\lambda/4}F_{-})*_{\lambda}
    (G + \zeta e^{it} G^{*n})\in\pi_{n}(\D).
  \end{equation}
  Consequently, $B_{t}/B_{t}^{*n}$ is a Blaschke product of degree $n$ which
  implies that
  \begin{equation}
    \label{eq:29}
    A_{s,t}:=e^{is} B_{t}- \zeta e^{it} B_{t}^{*n} \in
    \mathcal{T}_{n}(0) \quad \mbox{for all} \quad s,t\in\R,
  \end{equation}
  and that the zeros of $A_{s,t}(z)$ are continuous and arg-decreasing with
  respect to $s$. On the other hand, using the relations (\ref{eq:9}),
  (\ref{eq:31}), and the fact that the operation $*_{\lambda}$
  is associative, we find
  \begin{equation}
    \label{eq:25}
    \begin{split}
      A_{s,t} = &\; e^{is} (e^{-in\lambda/4}
      F_{+}-xe^{in\lambda/4}F_{-})*_{\lambda}(G + \zeta e^{it}G^{*n})-\\
      &\;-\zeta e^{it} \left[(e^{-in\lambda/4}
        F_{+}-xe^{in\lambda/4}F_{-})*_{\lambda}(G + \zeta
        e^{it}G^{*n})\right]^{*n}\\
      =&\; e^{is}e^{-in\lambda/4} (F*_{\lambda}G)_{+} + \zeta e^{i(s+t)}
      e^{-in\lambda/4} (F*_{\lambda}G^{*n})_{+}-\\
      &\;-xe^{is}e^{in\lambda/4}(F*_{\lambda}G)_{-} - x\zeta e^{i(s+t)}e^{in\lambda/4}(F*_{\lambda}G^{*n})_{-}\\
      &\;-\zeta e^{it}\left[e^{-in\lambda/4} (F*_{\lambda}G)_{+} +\zeta
        e^{it} e^{-in\lambda/4} (F*_{\lambda}G^{*n})_{+} - \right.\\
      &\qquad \left.-x e^{in\lambda/4} (F*_{\lambda} G)_{-} -x\zeta
        e^{it} e^{in\lambda/4} (F*_{\lambda}G^{*n})_{-}\right]^{*n} \\
      =&\; e^{is}e^{-in\lambda/4} (F*_{\lambda}G)_{+} + \zeta e^{i(s+t)}
      e^{-in\lambda/4} (F*_{\lambda}G^{*n})_{+}-\\
      &\;-xe^{is}e^{in\lambda/4}(F*_{\lambda}G)_{-} - x\zeta e^{i(s+t)}e^{in\lambda/4}(F*_{\lambda}G^{*n})_{-}\\
      &\;-\zeta e^{it}e^{-in\lambda/4} (F^{*n}*_{\lambda}G^{*n})_{+} - e^{-in\lambda/4} (F^{*n}*_{\lambda}G)_{+} + \\
      &+ x \zeta e^{it} e^{in\lambda/4} (F^{*n}*_{\lambda} G^{*n})_{-} +x
      e^{in\lambda/4} (F^{*n}*_{\lambda}G)_{-} \quad,
    \end{split}
  \end{equation}
  which shows that
  \begin{equation}
    \label{eq:33}
    \begin{split}
      \overline{\zeta}e^{-i(s+t)}A_{s,t} = &\;
      \overline{\zeta}e^{-it}e^{-in\lambda/4} F*_{\lambda}G_{+} +
      e^{-in\lambda/4} F*_{\lambda}(G^{*n})_{+}-\\
      &\;-x\overline{\zeta}e^{-it}e^{in\lambda/4}F*_{\lambda}G_{-} - x e^{in\lambda/4}F*_{\lambda}(G^{*n})_{-}\\
      &\;- e^{-is}e^{-in\lambda/4} F^{*n}*_{\lambda}(G^{*n})_{+} -
      e^{-i(s+t)}\overline{\zeta}e^{-in\lambda/4} F^{*n}*_{\lambda}G_{+} + \\
      &+ x e^{-is} e^{in\lambda/4} F^{*n}*_{\lambda} (G^{*n})_{-}
      +xe^{-i(s+t)}\overline{\zeta}
      e^{in\lambda/4} F^{*n}*_{\lambda}G_{-} \\
      = &\; \overline{\zeta} e^{-it}
      (F -e^{-is} F^{*n})*_{\lambda}(e^{-in\lambda/4}G_{+} -xe^{in\lambda/4}G_{-})+ \\
      &\; + (F-e^{-is} F^{*n})*_{\lambda}(e^{-in\lambda/4} (G^{*n})_{+}-
      x e^{in\lambda/4} (G^{*n})_{-})\\
      = &\; \overline{\zeta} e^{-it}
      (F -e^{-is} F^{*n})*_{\lambda}(e^{-in\lambda/4}G_{+} -xe^{in\lambda/4}G_{-})- \\
      &\; -e^{-is}\left[(F-e^{-is} F^{*n})*_{\lambda}(e^{-in\lambda/4} G_{+}-
        x e^{in\lambda/4} G_{-})\right]^{*n}\\
      = &\; \overline{\zeta} e^{-it} C_{s}-e^{-is} C_{s}^{*n},
    \end{split}
  \end{equation}
  where 
  \begin{equation*}
    C_{s} := (F -e^{-is} F^{*n})*_{\lambda}(e^{-in\lambda/4}G_{+} -xe^{in\lambda/4}G_{-}).
  \end{equation*}
  Exchanging the roles of $F$ and $G$ in the arguments that were used to deduce
  (\ref{eq:32}) shows that $C_{s} \in\pi_{n}(\D)$ for all
  $s,x\in\R$. Consequently, (\ref{eq:33}) implies that the zeros of $A_{s,t}(z)$
  are continuous and arg-increasing with respect to $t$.

  We have thus shown that all zeros of $A_{0,0}$ lie on $\T$ and are simple
  and that, for $s$ increasing from $0$, the zeros of $A_{s,0}$ arg-decrease,
  while those of $A_{0,s}$ arg-increase. It follows that for $s>0$, but $s$
  close to $0$, we have $A_{s,0}\curlyvee A_{0,s}$. Therefore, by the
  Hermite-Kakeya theorem,
  \begin{equation}
    \label{eq:30}
    A_{s,0}-A_{0,s} \in \mathcal{T}_{n}(0),
  \end{equation}
  since it follows readily from (\ref{eq:29}) that $c_{A_{s,0}}=c_{A_{0,s}}$.
  Using (\ref{eq:25}), we find
  \begin{align*}
    \frac{A_{s,0}-A_{0,s}}{e^{is}-1} = &\; e^{-in\lambda/4}
      (F*_{\lambda}G + \zeta (F*_{\lambda} G)^{*n})_{+}-
     \\
    &\;-x e^{in\lambda/4}
    (F*_{\lambda}G + \zeta (F*_{\lambda}
      G)^{*n})_{-}  \in \mathcal{T}_{n}(0)
  \end{align*}
  for all $x\in\R$. The Hermite-Kakeya theorem therefore implies
  \begin{equation*}
    (F*_{\lambda}G + \zeta (F*_{\lambda} G)^{*n})_{+}
    \curlyvee (F*_{\lambda}G + \zeta (F*_{\lambda} G)^{*n})_{-}.
  \end{equation*}
  Hence, by Lemma \ref{sec:defin-prel-1}, $F*_{\lambda}G + \zeta
  (F*_{\lambda} G)^{*n} \in \mathcal{T}_{n}(\lambda)$ for all
  $\zeta\in\T$, and thus $F*_{\lambda} G \in
  \mathcal{D}_{n}(\lambda)$ by Theorem \ref{sec:an-extens-suffr-4}.
\end{proof}

\begin{proof}[Proof of Theorem \ref{sec:an-extens-suffr-3}\ref{item:3}]
  Note that Theorem \ref{sec:an-extens-suffr-1}\ref{item:20} is equivalent to
  the statement that for $\lambda\in[0,\frac{2\pi}{n})$ and
  $\mu\in(\lambda,\frac{2\pi}{n})$ one has
  \begin{equation*}
    P*_{\lambda} Q_{n}(\mu;z) \in \mathcal{T}_{n}(\mu) \quad\mbox{for
      all}\quad P\in\overline{\mathcal{T}}_{n}(\lambda) \quad\mbox{which are
      not $\lambda$-extremal}.
  \end{equation*}
  Hence, in order to prove the assertion, it only remains to show that
  \begin{equation}
    \label{eq:27}
    F*_{\lambda} Q_{n}(\mu;z) \in \mathcal{D}_{n}(\mu) 
  \end{equation}
  for all $F\in\overline{\mathcal{D}}_{n}(\lambda)\setminus
  \overline{\mathcal{T}}_{n}(\lambda)$ for which $F + \zeta F^{*n}$ is not
  $\lambda$-extremal for any $\zeta\in\T$.

  Now, for such $F$ it follows from Theorem \ref{sec:an-extens-suffr-4} that
  $F+\zeta F^{*n} \in \overline{\mathcal{T}}_{n}(\lambda)$ for all $\zeta\in
  \T$. Since $F+\zeta F^{*n}$ is not $\lambda$-extremal for any
  $\zeta\in\T$, Theorem \ref{sec:an-extens-suffr-1}\ref{item:20} yields,
  \begin{equation*}
    F*_{\lambda}Q_{n}(\mu;z) + \zeta (F*_{\lambda}Q_{n}(\mu;z))^{*n} =
    (F+\zeta F^{*n})*_{\lambda}Q_{n}(\mu;z) \in \mathcal{T}_{n}(\mu) 
  \end{equation*}
  for all $\zeta\in\T$, which implies (\ref{eq:27}) by Theorem
  \ref{sec:an-extens-suffr-4}.
\end{proof}

\begin{proof}[Proof of Theorem \ref{sec:an-extens-suffr-3}\ref{item:4}]
  Suppose first that there is a $\lambda\in(0,\frac{2\pi}{n})$ such that
  $f(z) = \sum_{k=0}^{n}a_{k}z^{k}\in\mathcal{PD}_{n}(\lambda)\setminus
  \mathcal{PT}_{n}(\lambda)$ and set
  \begin{equation}
    \label{eq:14}
    F_{\mu}(z):= \sum_{k=0}^{n} C_{k}^{(n)}(\mu) a_{k}z^{k} \quad \mbox{ for }
    \quad    \mu\in[\lambda,\frac{2\pi}{n}).
  \end{equation}
  Then $F_{\mu} \in \mathcal{D}_{n}(\mu)\setminus \mathcal{T}_{n}(\mu)$ for all
  $\mu\in[\lambda,\frac{2\pi}{n})$, by Theorem
  \ref{sec:an-extens-suffr-3}\ref{item:3}. Hence, by definition of the
  classes $\mathcal{D}_{n}(\mu)$,
  \begin{equation*}
    \Im
    e^{-in\mu/2}\left(\frac{F_{\mu}(e^{i\mu/2}z)}{F_{\mu}(e^{-i\mu/2}z)}-1\right)
    > \sin\frac{n\mu}{2}
    \quad \mbox{for} \quad z\in\C\setminus\D, \,
    \mu\in[\lambda,\textstyle{\frac{2\pi}{n}}). 
  \end{equation*}
  This is equivalent to
  \begin{equation}
    \label{eq:21}
    \Re
    e^{-in\mu/2}\frac{z \Delta_{\mu}^{n}[F_{\mu}](z)}{F_{\mu}(e^{-i\mu/2}z)}
    > \frac{1}{2}
    \quad \mbox{for} \quad z\in\C\setminus\D, \,
    \mu\in[\lambda,\textstyle{\frac{2\pi}{n}}). 
  \end{equation}
  Now,
  \begin{equation}
    \label{eq:34}
    z\Delta_{\mu}^{n}[F_{\mu}] = \sum_{k=1}^{n} C_{k-1}^{(n-1)}(\mu) a_{k}z^{k},
  \end{equation}
  and it follows from the definition of $Q_{n}(\lambda;z)$ that
  \begin{equation*}
    Q_{n-1}({\textstyle{\frac{2\pi}{n}}};z)=\frac{1-z^{n}}{1-z} =
    \sum_{k=0}^{n-1} z^{k} 
    \qquad \mbox{and}\qquad 
    Q_{n}({\textstyle{\frac{2\pi}{n}}};z) = 1 + z^{n}.
  \end{equation*}
  Hence,
  \begin{equation*}
    C_{k}^{(n-1)}({\textstyle{\frac{2\pi}{n}}}) = 1 \quad \mbox{and}\quad 
    C_{k}^{(n)}({\textstyle{\frac{2\pi}{n}}}) = 0 \quad \mbox{for all} \quad
    k = 1,\ldots,n-1,
  \end{equation*}
  and therefore it follows from (\ref{eq:14}) and (\ref{eq:34}) that
  \begin{equation*}
    z\Delta_{\mu}^{n}[F_{\mu}] \rightarrow f(z)-a_{0} \quad \mbox{and} \quad 
    F_{\mu}(e^{-i\mu/2}z) \rightarrow -a_{n}z^{n} + a_{0} \quad \mbox{as} \quad
    \mu\rightarrow\frac{2\pi}{n}.
  \end{equation*}
  Because of (\ref{eq:21}), this implies
  \begin{equation}
   \label{eq:20}
    \Re \frac{f(z)-a_{0}}{a_{n} z^{n} - a_{0}}
    > \frac{1}{2}
    \quad \mbox{for} \quad z\in\C\setminus\overline{\D}.
  \end{equation}
  In order to show that (\ref{eq:20}) must also hold for $z\in\T$, note that
  $\mathcal{PD}_{n}(\lambda)\setminus\mathcal{PT}_{n}(\lambda)$ is open (if
  we identify $\mathcal{PD}_{n}(\lambda)\setminus\mathcal{PT}_{n}(\lambda)$
  with a subset of $\C^{n+1}$ via the mapping $\sum_{k=0}^{n} a_{k} z^{k}
  \mapsto (a_{0},\ldots,a_{n})$). Hence, if there is a
  $f\in\mathcal{PD}_{n}(\lambda)\setminus\mathcal{PT}_{n}(\lambda)$ for which
  there is a $z_{1}\in\T$ such that
  \begin{equation*}
    \Re \frac{f(z_{1})-a_{0}}{a_{n} z_{1}^{n} - a_{0}} = \frac{1}{2},
  \end{equation*}
  then by slightly changing the cofficient $a_{1}$ (to $a_{1}^{*}$, say) we can
  obtain a polynomial $g(z) = a_{0} + a_{1}^{*} z + \sum_{k=2}^{n} a_{k}
  z^{k}\in\mathcal{PD}_{n}(\lambda)\setminus\mathcal{PT}_{n}(\lambda)$ for which
  \begin{equation*}
    \Re \frac{g(z_{1})-a_{0}}{a_{n} z_{1}^{n} - a_{0}} < \frac{1}{2},
  \end{equation*}
  and which therefore, by (\ref{eq:20}) and Theorem
  \ref{sec:an-extens-suffr-1}\ref{item:21}, cannot belong to
  $\mathcal{PD}_{n}(\lambda)$. This contradiction shows that if
  $f\in\mathcal{PD}_{n}(\lambda)\setminus\mathcal{PT}_{n}(\lambda)$ for a
  $\lambda\in[0,\frac{2\pi}{n})$, then we have
  \begin{equation}
    \label{eq:35}
    \Re \frac{f(z)-a_{0}}{a_{n} z^{n} - a_{0}}
    > \frac{1}{2}
    \quad \mbox{for} \quad z\in\C\setminus\D,
  \end{equation}
  as required.

  Suppose on the other hand that $f(z) = \sum_{k=0}^{n} a_{k} z^{k}$ with
  $|a_{0}|<|a_{n}|$ satisfies (\ref{eq:35}). Then, since $|a_{0}/a_{n}|<1$,
  all zeros of $a_{n}z^{n} - a_{0}$ lie in $\D$, and thus, by continuity,
  there must be a $\mu\in(0,\frac{2\pi}{n})$ for which (\ref{eq:21})
  holds. As shown above this is equivalent to $F_{\mu}\in
  \mathcal{D}_{n}(\mu)$ and hence to $f\in\mathcal{PD}_{n}(\mu)$.
\end{proof}

\begin{proof}[Proof of Theorem \ref{sec:class-mathc-mathc-1}]
  As shown in the proof of Theorem \ref{sec:an-extens-suffr-3}\ref{item:4}
  (cf. (\ref{eq:21})) if $F\in\mathcal{D}_{n}(\lambda)$, then
  \begin{equation*}
    \Re e^{-in\lambda/2}\frac{z \Delta_{\lambda}^{n}[F](z)}{F(e^{-i\lambda/2}z)}
    > \frac{1}{2}
    \quad \mbox{for} \quad z\in\C\setminus\D.
  \end{equation*}
  Hence, for such $F$ the polynomial $\Delta_{\lambda}^{n}[F]$ cannot vanish
  in $\C\setminus\D$ and thus all its zeros must lie in $\D$. The proof of
  the case $F\in\overline{\mathcal{D}}_{n}(\lambda)$ is similar.
\end{proof}

\begin{proof}[Proof of Theorem \ref{sec:an-extens-suffr-6}]
  We will only prove the case
  $F\in\mathcal{D}_{n}(\lambda)\setminus\mathcal{T}_{n}(\lambda)$, the case
  $F\in\overline{\mathcal{D}}_{n}(\lambda)\setminus
  \overline{\mathcal{T}}_{n}(\lambda)$ being similar.
  
  Let $P$, $Q\in\pi_{n}(\T)$ with $c_{P}=:e^{it_{P}}\neq c_{Q}=:e^{it_{Q}}$
  and suppose that
  $F=P-Q\in\mathcal{D}_{n}(\lambda)\setminus\mathcal{T}_{n}(\lambda)$. By
  Theorem \ref{sec:an-extens-suffr-4} this is equivalent to
  \begin{align*}
    e^{-it}F+ e^{it} F^{*n} = & \; e^{-it}(P -Q) + 
    e^{it}(c_{P}^{2}P-c_{Q}^{2}Q)\\ = &\; 2\left[\cos(t+t_{P}) c_{P}P -
      \cos(t+t_{Q}) c_{Q} Q\right]
        \in \mathcal{T}_{n}(\lambda)
  \end{align*}
  for all $t\in\R$. Since $e^{it_{P}}\neq e^{it_{Q}}$, the real function
  $\cos(t+t_{Q})/\cos(t+t_{P})$ takes every real value exactly once when $t$
  traverses any interval of length $2\pi$, and thus the above relation is
  equivalent to
  \begin{equation}
    \label{eq:36}
    Q\in\mathcal{T}_{n}(\lambda) \quad\mbox{and}\quad 
    c_{P}P - r c_{Q} Q \in \mathcal{T}_{n}(\lambda) \quad\mbox{for
      all}\quad r\in \R.
  \end{equation}
  By Theorem \ref{sec:class-mathc-mathc} this holds if, and only if,
  \begin{equation*}
    \Delta_{\lambda}^{n}[Q]\in\pi_{n-1}(\D) \quad\mbox{and}\quad 
    c_{P}\Delta_{\lambda}^{n}[P] - r c_{Q} \Delta_{\lambda}^{n}[Q] \in
    \pi_{n-1}(\D) 
    \quad\mbox{for all}\quad r\in \R.
  \end{equation*}
  This is true if, and only if,
  $R:=c_{P}\overline{c}_{Q}\Delta_{\lambda}^{n}[P]/ \Delta_{\lambda}^{n}[Q]$
  takes no real values in $\C\setminus\D$, i.e. if, and only if, $R$ maps
  $\C\setminus\D$ into either the upper or lower half-plane.
\end{proof}

\begin{proof}[Proof of Theorem \ref{sec:an-extens-suffr-8}]
  Note first that if $F = P-Q$ with $P,Q\in\mathcal{ST}_{n}$ and
  $c_{P}\neq c_{Q}$, then a straightforward calculation using
  (\ref{eq:55}) shows that $T = (c_{P}^{2} - c_{Q}^{2}) S$ and hence
  it suffices to consider only the polynomial $T$.
  
  By definition and the maximum principle if
  $F\in\overline{\mathcal{D}}_{n}(\lambda)$, then 
  \begin{equation*}
    I(z):=\Im e^{-in\lambda/2} \frac{F_{+}(z)}{F_{-}(z)} \geq 0,
  \end{equation*}
  or
  \begin{equation*}
    0\leq \frac{1}{i}\left(e^{-in\lambda/2}
      \frac{F_{+}}{F_{-}} - 
      e^{in\lambda/2}
      \frac{\overline{F}_{+}}{\overline{F}_{-}}\right) = 
    \frac{-i \left( F_{+}\cdot (F^{*n})_{-}-
        F_{-}\cdot(F^{*n})_{+}\right)}
    {e^{in\lambda/2}F_{-}\cdot (F^{*n})_{-}}
  \end{equation*}
  for all $z\in\T$. It is easy to check that the numerator and
  denominator of the rational function on the right-hand side of this
  expression are $2n$-self-inversive polynomials, and thus the above
  inequality is equivalent to
  \begin{equation*}
    0\leq \frac{N(t)}{D(t)} \quad \mbox{for all} \quad t\in\R,
  \end{equation*}
  where
  \begin{equation*}
    N(t) := -ie^{-int} T(e^{it})
  \end{equation*}
  and
  \begin{equation*}
    D(t) := e^{-int}e^{in\lambda/2}F_{-}(e^{it})\cdot (F^{*n})_{-}(e^{it}) = 
    \left|F_{-}(e^{it})\right|^{2}
  \end{equation*}
  are real functions by (\ref{eq:48}).

  If $F\in\overline{\mathcal{D}}_{n}(\lambda)\setminus
  \overline{\mathcal{T}}_{n}(\lambda)$, then $D(t)>0$ by Theorem
  \ref{sec:an-extens-suffr-2}, which proves (\ref{item:5}). 
  
  If, on the other hand, $F\in\pi_{n}(\D)$ and all zeros
  of $T$ on $\T$ are of even order, then either $N(t)/D(t)\geq 0$ for
  all $t\in\R$, or $N(t)/D(t)\leq 0$ for all $t\in\R$, which implies
  that we have either $I(z)\geq 0$ for all $z\in\T$, or $I(z)\leq 0$
  for all $z\in\T$. Since $F\in\pi_{n}(\D)$, the function $I$ is
  harmonic in $\C\setminus\D$ with $I(\infty)=
  \sin(n\lambda/2)>0$. The maximum principle therefore shows that
  $I(z)\geq 0$ for all $z\in\C\setminus\D$. This proves (\ref{item:1}).

  By what we have shown so far it is clear that for a
  $F\in\pi_{n}(\D)$ we have
  $F\in\mathcal{D}_{n}(\lambda)\setminus\mathcal{T}_{n}(\lambda)$ if,
  and only if, $N(t) > 0$ for all $t\in\R$, i.e. if, and only if, the
  $2n$-self-inversive polynomial $T$ does not vanish on $\T$. By
  \cite[Thm. 7.1.3]{sheil} this holds if, and only if, $T$ has exactly
  $n-1$ critical points in $\overline{\D}$. In fact, if $T$ has no
  zeros on $\T$, then $T$ cannot have any critical points on $\T$ by
  (\ref{eq:47}). The proof of the theorem is thus complete.
\end{proof}  

\begin{proof}[Proof of Theorem \ref{sec:an-extens-suffr-7}]
  Suppose first that $A\in\pi_{n}(\T)$ with $c_{A}\neq 1$ is such that
  $F(z):=A(z)-Q_{n}(\lambda;z)\in
  \overline{\mathcal{D}}_{n}(\lambda)$. Then,
  $A\in\overline{\mathcal{T}}_{n}(\lambda)$ by Lemma
  \ref{sec:furth-prop-char-2}, and since
  \begin{equation*}
    Q_{n}(\lambda;-e^{i(2j-n\pm 1)\lambda/2})=0\quad \mbox{for}\quad
    j\in\{1,\ldots,n-1\}, 
  \end{equation*}
  we have that  
  \begin{equation*}
    e^{-in\lambda/2} \frac{F_{+}(-e^{i(2j-n)\lambda/2})}{F_{-}(-e^{i(2j-n)\lambda/2})} = 
    e^{-in\lambda/2} \frac{A_{+}(-e^{i(2j-n)\lambda/2})}{A_{-}(-e^{i(2j-n)\lambda/2})}
  \end{equation*}
  is real by Lemma \ref{sec:main-result:-an-1}. Since
  \begin{equation*}
    \lim_{z\rightarrow -e^{i(2j-n)\lambda/2}} \frac{(Q_{n})_{+}(\lambda;z)}
    {(Q_{n})_{-}(\lambda;z)} = \lim_{z\rightarrow -e^{i(2j-n)\lambda/2}}
    \frac{1+e^{in\lambda/2}z}{1+e^{-in\lambda/2}z} = 
    e^{in\lambda/2}\frac{\sin\frac{j\lambda}{2}}{\sin\frac{(j-n)\lambda}{2}},
  \end{equation*}
  it follows from Theorem \ref{sec:an-extens-suffr-4} and (\ref{eq:28}) that
  \begin{equation*}
    A(-e^{i(2j-n+1)\lambda/2})=
    e^{in\lambda/2}\frac{\sin\frac{j\lambda}{2}}{\sin\frac{(j-n)\lambda}{2}}
    A(-e^{i(2j-n-1)\lambda/2})
  \end{equation*}
  for all $j\in\{1,\ldots,n-1\}$, where $A(-e^{i(2j-n-1)\lambda/2})\neq 0$
  since $A\curlyvee Q_{n}(\lambda;z)$ by Lemma
  \ref{sec:furth-prop-char-2}. Hence, if we set
  $\hat{a}:=(-i)^{n}e^{in(n-1)\lambda/4}c_{A}A(-e^{-i(n-1)\lambda/2})$, then
  $\hat{a}\in\R\setminus\{0\}$ by (\ref{eq:48}) and
  \begin{equation}
    \label{eq:37}
    A(-e^{i(2k-n-1)\lambda/2}) = \hat{a}
    \overline{c}_{A}i^{n}e^{in(2k-n-1)\lambda/4} 
    \prod_{j=1}^{k-1}\frac{\sin\frac{j\lambda}{2}}
    {\sin\frac{(j-n)\lambda}{2}}    
  \end{equation}
  for $k\in\{1,\ldots,n\}$. Moreover, $\hat{b}:=
  (-i)^{n}e^{in(n+1)\lambda/4}c_{A}A(-e^{-i(n+1)\lambda/2})\in\R$ and thus
  \begin{equation}
    \label{eq:38}
    A\left(-e^{-i(n+1)\lambda/2}\right) = \hat{b}
    \overline{c}_{A}i^{n}e^{-in(n+1)\lambda/4}.
  \end{equation}
  
  Now, suppose $P$ is as in (\ref{eq:56}) with
  \begin{equation*}
    c=\overline{c}_{A}, \quad a= \frac{\hat{a}}{2^{n}
      \prod_{j=1}^{n-1} \sin \frac{j\lambda}{2}},
    \quad b=\frac{\hat{b}}{(-2)^{n}
      \prod_{j=1}^{n} \sin \frac{(j-n-1)\lambda}{2}}.
  \end{equation*}
  Then
  \begin{equation}
    \label{eq:58}
    \begin{split}
      P\left(-e^{-i(n+1)\lambda/2}\right) =&\; b c
      Q_{n}\left(\lambda;-e^{-i(n+1)\lambda/2}\right) =
      b c \prod_{j=1}^{n}\left(1-e^{i(j-n-1)\lambda}\right)\\
      =&\; b c (-2)^{n} i^{n} e^{-in(n+1)\lambda/4}
      \prod_{j=1}^{n} \sin \frac{(j-n-1)\lambda}{2}\\
      =&\; \hat{b} \overline{c}_{A} i^{n} e^{-in(n+1)\lambda/4}.
    \end{split}
  \end{equation}
  Moreover, since
  \begin{equation*}
    \lim_{z\rightarrow -e^{i(2k-n-1)\lambda/2}}
    \frac{Q_{n}(\lambda;z)}{1+e^{i(2l-n-1)\lambda/2}z} =
    \begin{cases}
      \displaystyle \prod_{j=1\atop j\neq n+1-k} ^{n} (1-e^{i(j+k-n-1)\lambda}) \quad& 
      \mbox{if} \quad l=n+1-k,\\
      0 \quad& \mbox{otherwise}, 
    \end{cases}
  \end{equation*}
  we have, for $k\in\{1,\ldots,n\}$,
  \begin{equation}
    \label{eq:59}
    \begin{split}
      P(-e^{i(2k-n-1)\lambda/2}) = &\, 2iac \prod_{j=1\atop j\neq n+1-k} ^{n}
      (1-e^{i(j+k-n-1)\lambda}) \\
      = &\, 2(-2)^{n-1}aci^{n} e^{in(2k-n-1)\lambda/4}
      \prod_{j=1\atop j\neq n+1-k} ^{n} \sin\frac{(j+k-n-1)\lambda}{2} \\
      = &\, 2(-2)^{n-1}aci^{n} e^{in(2k-n-1)\lambda/4}
      \prod_{j=1}^{n-k}\left( -\sin\frac{j\lambda}{2}\right)
      \prod_{j=1}^{k-1}  \sin\frac{j\lambda}{2} \\
      = &\, 2^{n}aci^{n} e^{in(2k-n-1)\lambda/4}
      \prod_{j=1}^{k-1}\frac{\sin\frac{j\lambda}{2}}
      {\sin\frac{(j-n)\lambda}{2}}
      \prod_{j=1}^{n-1}  \sin\frac{j\lambda}{2} \\
      = &\, \hat{a}\overline{c}_{A}i^{n} e^{in(2k-n-1)\lambda/4}
      \prod_{j=1}^{k-1}\frac{\sin\frac{j\lambda}{2}}
      {\sin\frac{(j-n)\lambda}{2}}.
    \end{split}
  \end{equation}
  Relations (\ref{eq:37})--(\ref{eq:59}) prove that $A = P$.

  Now suppose that there are $a,b\in\R$ and $c\in\T\setminus\{\pm 1\}$ such
  that $P$ is as in (\ref{eq:56}). We will prove that in this case
  $F(z):=P(z)-Q_{n}(\lambda;z) \in\overline{\mathcal{D}}_{n}(\lambda)$ by
  employing Theorem \ref{sec:an-extens-suffr-6} and showing that
  \begin{equation*}
    \frac{\overline{c}\Delta_{\lambda}^{n}[P]}
    {\Delta_{\lambda}^{n}[Q_{n}(\lambda;z)]} = 
    \frac{\overline{c} \Delta_{\lambda}^{n}[P]}{Q_{n-1}(\lambda;z)}
  \end{equation*}
  maps $\C\setminus\overline{\D}$ into the upper or lower halfplane.
  To that end, we write
  \begin{equation*}
    H_{k}(z) = \frac{e^{i(k-n-1)\frac{\lambda}{2}}}{\sin \frac{(k-n-1)\lambda}{2}}
      \frac{1+e^{i(n+1)\frac{\lambda}{2}}z}{1+e^{i(2k-n-1)\frac{\lambda}{2}}z},
  \end{equation*}
  which means that
  \begin{equation*}
    \overline{c} P(z) = b Q_{n}(\lambda;z) + a \sum_{k=1}^{n} 
    H_{k}(z)Q_{n}(\lambda;z).
  \end{equation*}
  It is easy to check that for two rational functions $f$ and $g$
  \begin{equation*}
    \Delta_{\lambda}^{n}[f\,g] = \Delta_{\lambda}^{n}[f]\,g_{+} +  
    f_{-}\,\Delta_{\lambda}^{n}[g].
  \end{equation*}
  Hence, since $Q_{n}(\lambda;e^{i\lambda/2}z)/Q_{n-1}(\lambda;z) =
  1+e^{in\lambda/2}z$ (cf. (\ref{eq:2})),
  \begin{equation*}
    \frac{\overline{c} \Delta_{\lambda}^{n}[P]}{Q_{n-1}(\lambda;z)} = b + 
    a \sum_{k=1}^{n} \left((1+e^{in\lambda/2}z)\Delta_{\lambda}^{n}[H_{k}] 
       +  H_{k}(e^{-i\lambda/2}z) \right).
  \end{equation*}
  Straightforward computations show 
  \begin{multline*}
    (1+e^{in\lambda/2}z)\Delta_{\lambda}^{n}[H_{k}] 
    +  H_{k}(e^{-i\lambda/2}z) = \\
    \frac{e^{-i(k-n-1)\lambda/2}}{\sin \frac{(k-n-1)\lambda}{2}} + \frac{2
      i}{\sin \frac{n\lambda}{2}} \left(\frac{e^{ik\lambda/2} \sin
        \frac{(n-k)\lambda}{2}}{1+e^{i(2k-n)\lambda/2}z}+
      \frac{e^{i(k-n-1)\lambda/2} \sin
        \frac{(k-1)\lambda}{2}}{1+e^{i(2(k-1)-n)\lambda/2}z}\right),
  \end{multline*}
  and hence
  \begin{equation*}
    \frac{\overline{c} \Delta_{\lambda}^{n}[P]}{Q_{n-1}(\lambda;z)} = 
    b + a \left(\sum_{k=0}^{n-1} 
      \frac{e^{-i(k-n)\lambda/2}}{\sin \frac{(k-n)\lambda}{2}}
    + \sum_{k=1}^{n-1} 
      \frac{2i}{1+e^{i(2k-n)\lambda/2}z}\right).
  \end{equation*}
  The rational function $2i/(1+z)$ maps $\C\setminus\D$ onto 
  $\{z\in\C:\Im z \leq 1\}$ and thus we find that
  \begin{equation*}
    \Im \left(\sum_{k=0}^{n-1} 
      \frac{e^{-i(k-n)\lambda/2}}{\sin \frac{(k-n)\lambda}{2}}
    + \sum_{k=1}^{n-1} 
      \frac{2i}{1+e^{i(2k-n)\lambda/2}z}\right) \leq  -1
  \end{equation*}
  for $z\in\C\setminus\D$.  Depending on the sign of $a$,
  $\overline{c}\Delta_{\lambda}^{n}[P]/Q_{n-1}(\lambda;z)$ therefore maps
  $\C\setminus\D$ into the lower or upper half-plane.
\end{proof}

\begin{proof}[Proof of Theorem \ref{sec:an-extens-suffr-5}]
  Suppose $p(z)=\sum_{k=0}^{n}a_{k} z^{k}$
  belongs to $\mathcal{PD}_{n}(\lambda)$ for a $\lambda\in(0,\frac{2\pi}{n})$
  and satisfies $p^{*n}(0)=1$. Since $p^{*n}(z)=
  \sum_{k=0}^{n}\overline{a}_{n-k} z^{k}$, this means $a_{n}=1$, and thus it
  follows from Theorem \ref{sec:an-extens-suffr-3}\ref{item:4} that
  \begin{equation*}
    \Re \frac{p(z)-a_{0}}{z^{n}-a_{0}} >\frac{1}{2}, 
    \quad \mbox{for}\quad z\in\C\setminus\D.
  \end{equation*}
  This implies
   \begin{equation*}
     \Re
     \frac{\overline{p(\frac{1}{\overline{z}})}-
       \overline{a}_{0}}{\frac{1}{z^{n}}-\overline{a}_{0}} 
     = \Re
     \frac{p^{*n}(z)-\overline{a}_{0}z^{n}}{1-\overline{a}_{0}z^{n}} 
     >\frac{1}{2}, \quad \mbox{for}\quad z\in\overline{\D}.
  \end{equation*}
  Hence, if a function $f$ is the limit of such polynomials $p^{*n}$
  (uniformly on compact subsets of $\D$), then $\Re f(z)>\frac{1}{2}$ for all
  $z\in\D$.
\end{proof}

\begin{proof}[Proof of Theorem \ref{sec:furth-prop-char-3}]
  We will only prove the 'only if'-direction, since the other direction is
  clear. Hence, suppose that $f(z)=\sum_{k=0}^{\infty} a_{k} z^{k}$ is
  analytic in $\D$ with $f(0)=1$ and $\Re f(z) > 0$ for all $z\in\D$. Choose
  an increasing sequence $(r_{n})_{n}\subset (0,1)$ with $\lim_{n\rightarrow
    \infty} r_{n}=1$ and set, for $n\in\N$,
  \begin{equation*}
    S_{n}(z) := \sum_{k=0}^{n} a_{k} z^{k}.
  \end{equation*}
  Next, for each $n\in\N$, choose $k_{n}\in\N$ with $k_{n}>k_{n-1}$ (where
  $k_{0}=0$) such that
  \begin{equation}
    \label{eq:57}
    \Re S_{k_{n}}(r_{n}z)>0 \quad \mbox{and} \quad
    |f(r_{n}z)-S_{k_{n}}(r_{n}z)| < \frac{1}{n} \quad \mbox{for all} 
    \quad z\in\overline{\D}.
  \end{equation}
  Then, clearly, $f(z) = \lim_{n\rightarrow \infty} S_{k_{n}}(r_{n}z)$, and thus also
  \begin{equation}
    \label{eq:40}
    f(z) = \lim_{n\rightarrow \infty} S_{k_{n}}(r_{n}z) + 
    z^{k_{n}} (S_{k_{n}}(r_{n}z))^{*k_{n}}
  \end{equation}
  uniformly on compact subsets of $\D$, since by the maximum principle
  \begin{equation*}
    \left|z^{k_{n}} (S_{k_{n}}(r_{n}z))^{*k_{n}}\right| =     
    \left|z^{2k_{n}} \overline{S}_{k_{n}}(r_{n}\overline{z}^{-1})\right| <
    \left|z^{k_{n}-1} S_{k_{n}}(r_{n}z)\right|, \qquad z\in\D.
  \end{equation*}
  Set
  \begin{equation}
    \label{eq:61}
     P(z) = S_{k_{n}}(r_{n}z) + z^{k_{n}} (S_{k_{n}}(r_{n}z))^{*k_{n}}.
  \end{equation}
  Then $P$ is of degree $m:=m_{n}:=2k_{n}$ and
  \begin{align*}
    e^{-imt/2}P(e^{it}) = \,&e^{-ik_{n}t}(S_{k_{n}}(r_{n}e^{it}) + e^{ik_{n}t}
    (S_{k_{n}}(r_{n}e^{it}))^{*k_{n}})\\
    =\,& e^{-ik_{n}t}S_{k_{n}}(r_{n}e^{it}) + e^{ik_{n}t}
    \overline{S_{k_{n}}(r_{n}e^{it})} \\ =\, & 2\Re e^{-imt/2}S_{k_{n}}(r_{n}e^{it})
  \end{align*}
  for all $t\in\R$. Thus, with $t_{k}=\frac{2k\pi}{m}$,
  \begin{equation*}
    (-1)^{k}P(e^{it_{k}}) = e^{-im t_{k}}P(e^{it_{k}}) = 
    2(-1)^{k}\Re S_{k_{n}}(r_{n}e^{it_{k}}),
  \end{equation*}
  and hence, because of (\ref{eq:57}),
  \begin{equation*}
    P(e^{2k\pi i/m}) > 0 \quad \mbox{for all} \quad k=1,\ldots,m.
  \end{equation*}
  The partial fraction expansion
  \begin{equation*}
    \frac{P(z)}{1-z^{m}} = -1 + \sum_{k=1}^{m}
    \frac{P(e^{2k\pi i/m})}{m(1-e^{2k\pi i/m}z)},
  \end{equation*}
  shows that $\sum_{k=1}^{m}P(e^{2k\pi i/m})=2m$. This, in turn leads
  to,
  \begin{equation}
    \label{eq:60}
    \frac{P(z)}{1-z^{m}}
    =  \sum_{k=1}^{m}\frac{P(e^{2k\pi i/m})}{2m}\left(
      \frac{2}{1-e^{2k\pi i/m}z} - 1\right)
    =  \sum_{k=1}^{m}s_{k}^{(n)} \frac{1+e^{2k\pi i/m} z}{1-e^{2k\pi i/m}z},
  \end{equation}
  where $s_{k}^{(n)}:=\frac{P(e^{2k\pi i/m})}{2m}> 0$ for all $k=1,\ldots,m$
  and $s_{1}^{(n)} + \ldots +s_{2m}^{(n)} =1$. The assertion now follows from
  (\ref{eq:40})--(\ref{eq:60}).
\end{proof}

\begin{proof}[Proof of Theorem \ref{sec:main-results}]
  Let $\tau\in\C\setminus\{0\}$, $\gamma\in[0,1)$. We start with the proof of
  (\ref{item:9}) and thus consider a polynomial
  $P\in\pi_{n}(\overline{\Omega}_{\tau,\gamma})$ and a polynomial
  $Q\in\pi_{n}(I_{\gamma})$. Then, $\beta\in I_{\gamma}$ for every zero
  $\beta$ of $Q$, which means 
  \begin{equation}
    \label{eq:39}
    \gamma |1+\beta| < 1 - |\beta|.
  \end{equation}
  This holds if, and only if,
  \begin{equation*}
    |\beta| < \left|1 + \gamma z \left(1 +
        \beta\right)\right| \quad\mbox{for}\quad z\in \mathbb{T},
  \end{equation*}
  and hence, by the maximum principle (note that $1/(\gamma
  |1+\beta|)>1/(1-|\beta|)>1$ by (\ref{eq:39})), if, and only if,
  \begin{equation*}
    \omega(z) = \frac{-\beta z}{1 + \gamma z(1+\beta)}
  \end{equation*}
  maps $\overline{\D}$ into $\D$. Since
  \begin{equation*}
    -\beta\, w_{\tau,\gamma}(z) = \frac{-\beta \tau z}{1 + \gamma z} = 
    \frac{\tau \omega(z)}{1 + \gamma
      \omega(z)} = w_{\tau,\gamma}(\omega(z)),
  \end{equation*}
  this shows
  \begin{equation*} 
    -\beta\, \overline{\Omega}_{\tau,\gamma} = 
    -\beta\, w_{\tau,\gamma}(\overline{\D}) \subseteq w_{\tau,\gamma}(\D) 
    = \Omega_{\tau,\gamma}
  \end{equation*}
  for every zero $\beta$ of $Q$. This implies $P*_{GS}
  Q\in\pi_{n}(\Omega_{\tau,\gamma})$ by the Grace-Szeg\"o convolution theorem.

  On the other hand, our considerations show that if $Q$ of degree $n$ has a
  zero $\beta \notin I_{\gamma}$, then there is an
  $\alpha\in\overline{\Omega}_{\tau,\gamma}$ such that $-\alpha\beta \notin
  \Omega_{\tau,\gamma}$. For such an $\alpha$ the polynomial
  \begin{equation*}
    P(z):=(1-z/\alpha)^n = \sum_{k=0}^{n} {n\choose k} (-\alpha)^{-k} z^{k}
  \end{equation*}
  is of degree $n$ with all zeros in $\overline{\Omega}_{\tau,\gamma}$ and we
  have
  \begin{equation*}
    (P *_{GS} Q)(z) = Q(-z/\alpha)
  \end{equation*}
  Hence, in this case $P*_{GS} Q$ has a zero at $-\alpha \beta$ which is not
  in $\Omega_{\tau,\gamma}$. This proves Theorem
  \ref{sec:main-results}\ref{item:9} and the proof of (\ref{item:15}) is so
  similar that it can be omitted.

  If $P\in\pi_{n}(\overline{I}_{\gamma})$ and $Q\in\pi_{n}(I_{\gamma})$, then
  by (\ref{item:9}) we have $R*_{GS} Q\in\pi_{n}(\Omega_{\tau,\gamma})$ for
  all $R\in\pi_{n}(\overline{\Omega}_{\tau,\gamma})$, and consequently, by
  (\ref{item:15}), $R*_{GS} Q*_{GS} P \in\pi_{n}(\Omega_{\tau,\gamma})$ for
  all such $R$.  Another application of (\ref{item:9}) shows
  that $P*_{GS}Q\in\pi_{n}(I_{\gamma})$.  On the other hand, if $Q$ of degree
  $n$ is such that $P*_{GS}Q\in\pi_{n}(I_{\gamma})$ for all
  $P\in\pi_{n}(\overline{I}_{\gamma})$, then in particular
  \begin{equation*}
    Q(z) = (1+z)^{n} *_{GS} Q(z) \in \pi_{n}(I_{\gamma}),
  \end{equation*}
  since $-1\in\overline{I}_{\gamma}$. This proves Theorem
  \ref{sec:main-results}\ref{item:11}.

  Suppose now that $Q$ of degree $\leq n$ is such that $P*_{GS}Q\in\pi_{\leq
    n}(\C\setminus\overline{\Omega}_{\tau,\gamma})$ for all $P\in\pi_{\leq
    n}(\C\setminus\Omega_{\tau,\gamma})$. Since $P\mapsto R:=P^{*n}$ is a
  bijection between $\pi_{\leq n}(\C\setminus\Omega_{\tau,\gamma})$ and
  $\pi_{n}(\overline{\Omega}_{(\gamma^{2}-1)/\overline{\tau},\gamma})$, this
  holds if, and only if,
  \begin{equation*}
    R*_{GS}Q^{*n} = (P*_{GS}Q)^{*n} \in \pi_{n}(\Omega_{(\gamma^{2}-1)/\overline{\tau},\gamma})
  \end{equation*}
  for all
  $R\in\pi_{n}(\overline{\Omega}_{(\gamma^{2}-1)/\overline{\tau},\gamma})$.
  Because of Statement (\ref{item:9}) this is equivalent to
  $Q^{*n}\in\pi_{n}(I_{\gamma})$. Since $Q^{*n}\mapsto Q$ is a bijection
  between $\pi_{n}(I_{\gamma})$ and $\pi_{\leq n}(O_{\gamma})$, we have
  verified (\ref{item:10}). The two remaining statements of Theorem
  \ref{sec:main-results} are shown in a similar fashion, and the proof of the
  theorem is thus complete.
\end{proof}

\bibliographystyle{amsplain}

\bibliography{polyaschurlogconcave}

\providecommand{\bysame}{\leavevmode\hbox to3em{\hrulefill}\thinspace}
\providecommand{\MR}{\relax\ifhmode\unskip\space\fi MR }
\providecommand{\MRhref}[2]{%
  \href{http://www.ams.org/mathscinet-getitem?mr=#1}{#2}
}
\providecommand{\href}[2]{#2}
\begin{thebibliography}{10}

\bibitem{andaskroy99}
G.E. Andrews, R.~Askey, and R.~Roy, \emph{Special functions}, Encyclopedia of
  Mathematics and its Applications, vol.~71, Cambridge University Press,
  Cambridge, 1999.

\bibitem{borcebraend09}
J.~Borcea and P.~Br{\"a}nd{\'e}n, \emph{P\'olya-{S}chur master theorems for
  circular domains and their boundaries}, Ann. of Math. (2) \textbf{170}
  (2009), no.~1, 465--492.

\bibitem{grace}
J.H. Grace, \emph{The zeros of a polynomial}, Proc. Cambridge Phil. Soc.
  \textbf{11} (1900--1902), 352--357.

\bibitem{herglotz}
G.~{Herglotz}, \emph{{\"Uber Potenzreihen mit positivem, reellem Teil im
  Einheitskreis.}}, {Leipz. Ber.} \textbf{63} (1911), 501--511.

\bibitem{kaccheung2002}
V.~Kac and P.~Cheung, \emph{Quantum calculus}, Universitext, Springer-Verlag,
  New York, 2002.

\bibitem{lam12}
M.~Lamprecht, \emph{Suffridge's convolution theorem for polynomials and entire
  functions having only real zeros}, arXiv:1210.1102 [math.CA].

\bibitem{lam11}
\bysame, \emph{Interspersion in {S}uffridge's polynomial theory}, Comput.
  Methods Funct. Theory \textbf{11} (2011), no.~1, 325--351.

\bibitem{polschoe58}
G.~P{\'o}lya and I.~J. Schoenberg, \emph{Remarks on de la {V}all\'ee {P}oussin
  means and convex conformal maps of the circle}, Pacific J. Math. \textbf{8}
  (1958), 295--334.

\bibitem{rahman}
Q.~I. Rahman and G.~Schmeisser, \emph{Analytic theory of polynomials}, London
  Mathematical Society Monographs. New Series, vol.~26, The Clarendon Press
  Oxford University Press, Oxford, 2002.

\bibitem{rusch77}
S.~Ruscheweyh, \emph{Linear operators between classes of prestarlike
  functions}, Comment. Math. Helv. \textbf{52} (1977), no.~4, 497--509.

\bibitem{rusch82}
\bysame, \emph{Convolutions in geometric function theory}, S\'eminaire de
  Math\'ematiques Sup\'erieures [Seminar on Higher Mathematics], vol.~83,
  Presses de l'Universit\'e de Montr\'eal, Montreal, Que., 1982, Fundamental
  Theories of Physics.

\bibitem{ruschsal08}
S.~Ruscheweyh and L.~Salinas, \emph{Universally prestarlike functions as
  convolution multipliers.}, Math. Z. \textbf{263} (2009), no.~3, 607--617.

\bibitem{ruschsal09}
\bysame, \emph{New {P}\'olya-{S}choenberg type theorems}, J. Math. Anal. Appl.
  \textbf{363} (2010), no.~2, 481�--496.

\bibitem{ruschsalsug09}
S.~Ruscheweyh, L.~Salinas, and T.~Sugawa, \emph{Completely monotone sequences
  and universally prestarlike functions}, Israel J. Math. \textbf{171} (2009),
  no.~1, 285�--304.

\bibitem{ruschsheil73}
S.~Ruscheweyh and T.~Sheil-Small, \emph{Hadamard products of {S}chlicht
  functions and the {P}\'olya-{S}choenberg conjecture}, Comment. Math. Helv.
  \textbf{48} (1973), 119--135.

\bibitem{sheil78}
T.~Sheil-Small, \emph{{The Hadamard product and linear transformations of
  classes of analytic functions.}}, J. Anal. Math. \textbf{34} (1978),
  204--239.

\bibitem{sheil}
\bysame, \emph{Complex polynomials}, Cambridge Studies in Advanced Mathematics,
  vol.~75, Cambridge University Press, Cambridge, 2002.

\bibitem{suffridge}
T.~J. Suffridge, \emph{Starlike functions as limits of polynomials}, Advances
  in complex function theory ({P}roc. {S}em., {U}niv. {M}aryland, {C}ollege
  {P}ark, {M}d., 1973--1974), Springer, Berlin, 1976, pp.~164--203. Lecture
  Notes in Math., Vol. 505.

\bibitem{szegoe}
G.~Szeg\"o, \emph{{Bemerkungen zu einem Satz von J. H. Grace \"uber die Wurzeln
  algebraischer Gleichungen.}}, Math. Zeitschr. \textbf{13} (1922), 28--55.

\end{thebibliography}

\end{document}